\documentclass[a4paper, 10pt]{article}
\usepackage[latin1]{inputenc}
\usepackage[english]{babel}

\usepackage{amsmath}
\usepackage{amsthm}
\usepackage{amssymb}
\usepackage{color}
\usepackage{bbm}

\theoremstyle{plain}
\newtheorem{theorem}{Theorem}
\newtheorem{proposition}[theorem]{Proposition}
\newtheorem{lemma}[theorem]{Lemma}
\newtheorem{corollary}[theorem]{Corollary}
\newtheorem{remark}[theorem]{Remark}

\newcommand{\R}{\mathbb{R}}

\newcommand{\mD}{\mathcal{D}}
\newcommand{\mL}{\mathcal{L}}
\newcommand{\mM}{\mathcal{M}}
\newcommand{\mR}{\mathcal{R}}
\newcommand{\mT}{\mathcal{T}}
\newcommand{\mV}{\mathcal{V}}
\newcommand{\dd}{\, \text{d}}

\usepackage{hyperref}
\usepackage{vmargin}
\setmarginsrb{3cm}{2.5cm}{3cm}{1.5cm}{0cm}{0cm}{0cm}{1.5cm}

\begin{document}

\title{Taylor Expansions of the Value Function Associated with a Bilinear Optimal Control
Problem}
\author{Tobias Breiten\footnote{Institute of Mathematics, University of Graz, Austria. E-mail: tobias.breiten@uni-graz.at} \quad
Karl Kunisch\footnote{Institute of Mathematics, University of Graz, Austria and RICAM Institute, Austrian Academy of Sciences, Linz, Austria. E-mail: karl.kunisch@uni-graz.at} \quad
Laurent Pfeiffer\footnote{Institute of Mathematics, University of Graz, Austria. E-mail: laurent.pfeiffer@uni-graz.at}
}

\maketitle

\begin{abstract}
A general bilinear optimal control problem subject to an infinite-dimensional state equation is considered.
Polynomial approximations of the associated value function are derived around the steady state by repeated formal differentiation of the Hamilton-Jacobi-Bellman equation.
The terms of the approximations are described by multilinear forms, which can be obtained as solutions to generalized Lyapunov equations with recursively defined right-hand sides.
They form the basis for defining a suboptimal feedback law. The approximation properties of this feedback law are investigated. An application to the optimal control of a Fokker-Planck equation is also provided.
\end{abstract}

{\em Keywords:}
Value function, Hamilton-Jacobi-Bellman equation, bilinear control systems, Riccati equation, generalized Lyapunov equations, Fokker-Planck equation.

{\em AMS Classification:}
49J20, 49N35, 93D05, 93D15.

\section{Introduction}

In this article, we consider the following bilinear optimal control problem:
\begin{align}
\inf_{u \in L^2(0,\infty)} & \mathcal{J}(u,y_0) := \frac{1}{2} \int_0^\infty \| y (t) \|_Y^2 \dd
t + \frac{\alpha}{2} \int_0^\infty u(t)^2 \dd t, \label{eqProblemIntro} \\
& \text{where: }
 \left\{	\begin{array} {l}
\frac{\dd }{\dd t}y(t)= Ay(t)+(Ny(t)+B)u(t), \quad \text{for } t>0 \label{eqEquationIntro} \\
y(0)=y_0.
\end{array} \right.
\end{align}
Here, $V \subset Y \subset V^*$ is a Gelfand triple of real Hilbert spaces, $y_0 \in Y$, $A\colon \mD(A)\subset Y \rightarrow Y$ is the infinitesimal generator of an analytic $C_0$-semigroup $e^{At}$ on $Y$, $N \in \mathcal{L}(V,Y)$, $B\in Y$, and $\alpha >0$. Additional assumptions on the system, in particular a stabilizability assumption, will be made in subsections \ref{subsec:Setting} and \ref{subsecGenLyapunov}. The goal pursued with problem \eqref{eqProblemIntro} is the stabilization of the dynamical system \eqref{eqEquationIntro} around the steady state 0 when a perturbation $y_0$ is applied.
We denote by $\mathcal{V}$ the associated value function: for $y_0 \in Y$, $\mathcal{V}(y_0)$ is the value of problem \eqref{eqProblemIntro} with initial condition $y_0$.

Rather than investigating this problem as a mathematical programming problem, which associates an optimal open-loop control with a given initial value $y_0$, we take the perspective of designing an optimal feedback law.
The design of an optimal feedback law is intimately related to the computation of the value function $\mathcal{V}$, which is in general a very difficult task, since $y$ takes values in an infinite-dimensional space. Even after discretization, the computation time needed for obtaining $\mathcal{V}$ usually increases exponentially with the dimension of the discretized state space, a phenomenon known as the curse of dimensionality. Nonetheless, the computation of a feedback law, rather than an open-loop control, is particularly relevant in the context of stabilization problems.

The goal of this article is to construct a Taylor approximation of the value function at the origin, and to derive from this approximation a feedback law which generates good open-loop controls for small values of $y_0$.
We begin by proving the existence of a sequence of multilinear forms $\mT_k \colon Y^k \rightarrow \R$ such that for any $p \geq 2$,
\begin{equation*}
\mathcal{V}_p(y):= \sum_{k=2}^p \frac{1}{k!} \mT_k(y,...,y)
\end{equation*}
is a polynomial approximation of order $p+1$ of the value function
$\mathcal{V}$ in the neighborhood of $0$, that is to say
\begin{equation} \label{eq:errorEstim}
\mathcal{V}(y) - \mathcal{V}_p(y)= \mathcal{O}( \| y \|_Y^{p+1}).
\end{equation}
The sequence $(\mathcal{T}_k)_{k \geq 2}$ is constructed  by induction. The bilinear mapping $\mT_2$ is the solution to an algebraic operator Riccati equation. For all $k \geq 3$, the mapping $\mT_k$ is the solution to the following generalized Lyapunov equation: for all $z_1,...,z_k \in \mathcal{D}(A)$,
\begin{equation}\label{eq:Ljapu}
\sum_{i=1}^k \mathcal{T}_k(z_1,...,z_{i-1},A_{\Pi} z_i, z_{i+1},...,z_k)=
\mathcal{R}_k(z_1,...,z_k),
\end{equation}
where the operator $A_\Pi$ generates an exponentially  stable semigroup on $Y$ and the right-hand side $\mathcal{R}_k$ is known and depends on $N$, $B$, $\mathcal{T}_2$,...,$\mathcal{T}_{k-1}$ in an explicit fashion.
The terminology \emph{generalized Lyapunov equations} is motivated by the fact 
that \eqref{eq:Ljapu} can be seen as a generalization of operator Lyapunov 
equations, which can typically be written as follows:
\begin{equation*}
\mathcal{T}(A_{\Pi}z_1,z_2) + \mathcal{T}(z_1,A_{\Pi} z_2)= 
\mathcal{R}(z_1,z_2).
\end{equation*}
To achieve this task and to present the resulting expressions in an convenient manner, we exploit the symmetry structure of the formal derivatives of $\mathcal{V}$.
From the approximation $\mathcal{V}_p$ of the value function $\mathcal{V}$, we derive the following feedback law:
\begin{equation*}
\mathbf{u}_p(y) = -
\frac{1}{\alpha} D \mathcal{V}_p(y)(Ny+B)
\end{equation*}
and analyse the associated closed-loop system:
\begin{equation} \label{eqClosedLoopIntro}
\frac{\dd }{\dd t}y(t)= Ay(t)+ (Ny(t)+B) \mathbf{u}(y(t)), \quad y(0)= y_0.
\end{equation}
We denote by $\mathbf{U}_p(y_0)$ the open-loop generated by $\mathbf{u}_p$ for a given initial condition $y_0$, that is to say, $\mathbf{U}_p(y_0;t)= \mathbf{u}_p(y(t))$, where $y(t)$ is the solution to \eqref{eqClosedLoopIntro}.
On top of \eqref{eq:errorEstim}, we prove that
\begin{equation} \label{eqEstimate1}
\mathcal{J}(\mathbf{U}_p(y_0),y_0) \leq \mathcal{V}(y_0) + \mathcal{O}(\| y_0 \|_Y^{p+1}).
\end{equation}
In other words, we prove that the open-loop controls generated by $\mathbf{u}_p$ are $\mathcal{O}(\| y_0 \|^{p+1})$-optimal. We also prove for all $y_0$ sufficiently small, there exists an optimal control $\bar{u}$ such that
\begin{equation} \label{eqEstimate2}
\| \mathbf{U}_p(y_0)-\bar{u} \|_{L^2(0,\infty)} = \mathcal{O} \big( \| y_0 \|_Y^{(p+1)/2} \big).
\end{equation}

In the finite-dimensional case, expansion techniques for Lyapunov functions or for the value function associated with nonlinear control problems have a long history, which dates back at least to \cite{Alb61}.
To the best of our knowledge, our article is the first one dealing with Taylor expansions of any order for infinite-dimensional systems. A sophisticated analysis is also required for proving the well-posedness of the closed-loop system associated with $\mathbf{u}_p$.
Moreover, the convergence rate analysis has apparently received little attention so far, especially concerning the rate of convergence of the suboptimal controls to the optimal ones. As far as we know, estimates \eqref{eqEstimate1} and \eqref{eqEstimate2} are new. In this respect, we are only aware of the analysis done in \cite{Gar72} for systems of the form: $\frac{\text{d}}{\text{dt}} y(t)= Ay(t) + \varepsilon \varphi(y(t)) + Bu(t)$.

Let us mention some additional related literature. In \cite{Alb61}, the author considers a general stabilization problem for a nonlinear system that can be expanded in a power series around the origin. It is shown that the optimal control can be characterized in terms of a convergent power series as well. In \cite{Luk69}, the expandability of the optimal control for nonlinear analytic and differentiable systems is analyzed in detail. As in the other works on this topic, an important assumption is the local stabilizability of the underlying system. Moreover, it is shown in \cite{Luk69} that the lowest order terms of the approximation are defined by the linearized dynamics. For nonlinear systems with linear controls, in \cite{Gar72,GarES92}, the degree of approximation of the truncated Taylor series to the optimal control is analyzed. In \cite{CebC84}, the formal power series approach is discussed for the particular case of bilinear control systems. The explicit structure of the terms up to the third order are given and shown to be unique for locally stabilizable systems. More recent developments, which are based on Taylor series expansions and their use for a numerical approximation of the value function can be found in \cite{NavK07} and \cite{AguK14}, as well as in the survey article \cite{KreAH13}. For a further detailed overview on obtaining optimal feedback controls including numerical experiments in the finite-dimensional case, we refer to the survey \cite{BeeTB00} and to the references therein.

In the infinite-dimensional case, we are only aware of the results from \cite{TheBR10}, where a third-order approximation for a stabilization problem of the Burgers equation with the control entering linearly is investigated theoretically and numerically. To the best of our knowledge, a more general analysis of Taylor approximations for infinite-dimensional control systems does not exist yet.

Our article is structured as follows. Section \ref{sectionDerivationLyapunov} is a preparatory section. We show that if $\mathcal{V}$ is Fr\'echet differentiable, then it is the solution to some HJB equation. In Theorem \ref{thm:DifferentiabilityImpliesLyapunov}, we further show that if $\mathcal{V}$ is $(p+1)$-times differentiable in the neighborhood of 0, then $D^p \mathcal{V}(0)$ is a solution to a generalized Lyapunov equation. This result motivates the construction of $\mathcal{V}_p$. Our main contributions start in section \ref{sectionWellPosednessLyapunov}. In this section, we rigorously define the sequence of multilinear forms $(\mathcal{T}_k)_{k \geq 2}$, the polynomial approximations $\mathcal{V}_p$, and the feedback laws $\mathbf{u}_p$. In section \ref{sectionClosedLoop}, we prove the well-posedness of the closed-loop system associated with $\mathbf{u}_p$, in the neighborhood of $0$. In section \ref{sectionOptimalOpenLoopControl}, we prove the existence of an optimal (open-loop) control  and investigate some of its regularity properties. Section \ref{sectionTaylorExpandability} contains our main results: in Theorem \ref{thm:subOptimality}, we prove the error estimates \eqref{eq:errorEstim} and \eqref{eqEstimate1}. Estimate \eqref{eqEstimate2} is proved in Theorem \ref{thm:errorEstimForUp}.

\section{Analytical preliminaries}

\subsection{State equation}
\label{subsec:Setting}

Throughout the article, $V\subset Y\subset V^*$ denotes a Gelfand triple of real Hilbert spaces, where the embedding of $V$ into $Y$ is dense and compact and where $V^*$ stands for the topological dual of $V$. Further, $a \colon V \times V\rightarrow \R$ denotes a bounded $V$-$Y$ bilinear form on $V \times V$, i.e.\@ there exist $\nu >0$ and $\lambda \in \R$, such that
\begin{equation}\label{ass:A1}\tag{A1}
a(v,v)\geq \nu \| v \|^2_V- \lambda \| v \|^2_Y \quad \text{for all}\; v\in V.
\end{equation}
Associated with $a$, there exists a unique closed linear operator $A$ in $Y$ characterized by $\mD(A)=\{v\in V\colon w \mapsto a(v,w)
\,\text{is}\,Y\text{-continuous}\}$ and by
$\langle A v,w \rangle_Y =-a(v,w)$, for all $v \in \mD(A)$ and $w\in V$, see e.g.\@
\cite[Part II, Chapter 1, Section 2.7]{Benetal07}. Moreover, $A$ has a uniquely
defined extension as bounded linear operator in $\mathcal{L}(V,V^*)$, which
will be denoted by the same symbol, see.\@ \cite[Section 2.2]{Tan79}. Further, we choose
$N\in \mathcal{L}(V,Y)$, and thus $N^*\in \mathcal{L}(Y,V^*)$. We assume
that the restrictions of $N$ and $N^*$ to $\mD(A)$ and to $V$, together with $N$
satisfy
\begin{equation}\label{ass:A2} \tag{A2}
N\in \mathcal{L}(V,Y)\cap \mathcal{L}(\mD(A),V) \quad  \text{and} \quad N^*\in
\mathcal{L}(V,Y),
\end{equation}
where $\mD(A)$ is endowed with the graph norm. Moreover, we assume that $B \in
Y$ and we choose $\alpha>0$. The inner product on $Y$ is denoted  by $\langle
\cdot,\cdot\rangle$ or $\langle \cdot,\cdot\rangle_Y$ and duality between $V$
and $V^*$ by $\langle \cdot,\cdot\rangle_{V,V^*}$.
We are now prepared to state the problem under consideration:
\begin{equation} \label{eqProblem} \tag{$P$}
	\inf_{u \in L^2(0,\infty)} \mathcal{J}(u,y_0):=  \frac{1}{2}
\int_0^\infty
	\| S(u,y_0;t)\|^2_Y \dd t + \frac{\alpha}{2}
\int_0^\infty u(t)^2 \dd t,
\end{equation}
where $S(u,y_0;\cdot)$ is the solution to
\begin{equation}\label{eq2.1}
\left\{	\begin{array} {l}
\frac{\dd}{\dd t}y(t)=Ay(t)+Ny(t)u(t)+Bu(t), \quad \text{for} \;t>0,\\[1.5ex]
y(0)=y_0.
\end{array} \right.
\end{equation}
Here, $S(u,y_0)$ is referred to as solution of \eqref{eq2.1} if for each $T>0$, it
lies in the space
\begin{equation*}
W(0,T)=\left\{y\in L^2 (0,T;V):\frac{\dd}{\dd t}y\in L^2 (0,T;V^*)\right\}.
\end{equation*}
We recall that $W(0,T)$ is continuously embedded in $C([0,T],Y)$ \cite[Theorem
3.1]{LioM72}.
Let us note that the origin is a steady state of the uncontrolled  system
\eqref{eq2.1}.
Associated with \eqref{eqProblem} and \eqref{eq2.1}, we define the value function on $Y$:
\begin{equation*}
\mathcal{V} (y_0) = \inf_{u \in L^2(0,\infty)} \mathcal{J}(u,y_0).
\end{equation*}
The following lemma  summarizes some properties of equation \eqref{eq2.1}. The
proof is quite standard and therefore deferred to the Appendix.

\begin{lemma} \label{lemma:RegEstim}
Assume that \eqref{ass:A1} and \eqref{ass:A2} hold. For all $u \in
L^2(0,\infty)$ and for all $y_0\in Y$, there exists a unique solution $y$
to \eqref{eq2.1} and a continuous function $c$ such that
\begin{equation}\label{eq:KK7}
\|y\|_{W(0,T)} \le c(T,\|y_0\|_Y, \|u\|_{L^2(0,T)}).
\end{equation}
Moreover, there exists a constant $C>0$ such that for all $T \geq 0$, for all $u \in
L^2(0,\infty)$ and for all $y_0$ and $\tilde{y}_0 \in Y$, we have
\begin{align}
\| y \|_{L^\infty(0,T;Y)}^2 \leq \ & \big( \|y_0\|_Y^2 + C \| u \|_{L^2(0,T)}^2
\big)\; e^{ C(T + \| u \|_{L^2(0,T)}) }, \label{eq:RegEstim1} \\
\| \tilde{y}-y \|_{L^\infty(0,T;Y)}^2 \leq \ & \|\tilde{y}_0 - y_0\|_Y^2\; e^{
 C(T + \| u \|_{L^2(0,T)}) }. \label{eq:RegEstim2}
\end{align}
If further $y$ lies in $L^2(0,\infty;Y)$, the constant $C$ is such that
\begin{align}
\| y \|_{L^\infty(0,\infty;Y)}^2 \leq  \ &
\Big( \| y_0 \|_Y^2 + C\big( \| y \|_{L^2(0,\infty;Y)}^2 + \| u
\|_{L^2(0,\infty)}^2 \big) \Big)
e^{ C \| u \|_{L^2(0,\infty)}^2}, \label{eq:RegEstimBis1}
\\
\| y \|_{L^2(0,\infty;V)}^2 \leq \ &
C \Big( \| y \|_{L^2(0,\infty;Y)}^2 + \big( \| y \|_{L^\infty(0,\infty;Y)}^2 +1
\big) \| u \|_{L^2(0,\infty)}^2 \Big), \label{eq:RegEstimBis2}
\\
\left\| \frac{\mathrm{d} y}{\mathrm{d} t} \right\|_{L^2(0,\infty;V^*)}^2 \leq \
& C \Big( \| y \|_{L^2(0,\infty;V)}^2 + \big( \| y \|_{L^\infty(0,\infty;Y)}^2
+1 \big) \| u \|_{L^2(0,\infty)}^2 \Big). \label{eq:RegEstimBis3}
\end{align}
Additionally, $\lim_{T\to \infty}  \| y(T) \|_Y = 0$.
\end{lemma}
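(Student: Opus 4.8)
\quad
The plan is to freeze the coefficient $u(t)$ in \eqref{eq2.1}, so that the state equation becomes a linear non-autonomous parabolic problem, and to combine the classical variational energy method with a Faedo--Galerkin approximation. Fix $T>0$. The basic a~priori estimate is obtained by testing the equation with $y(t)$ and using that $\frac{\mathrm d}{\mathrm d t}\|y(t)\|_Y^2 = 2\langle\frac{\mathrm d}{\mathrm d t}y(t),y(t)\rangle_{V^*,V}$ for $y\in W(0,T)$, together with \eqref{ass:A1}:
\begin{equation*}
\tfrac12\tfrac{\mathrm d}{\mathrm d t}\|y(t)\|_Y^2+\nu\|y(t)\|_V^2\le\lambda\|y(t)\|_Y^2+|u(t)|\big(\|N\|_{\mL(V,Y)}\|y(t)\|_V+\|B\|_Y\big)\|y(t)\|_Y .
\end{equation*}
A Young inequality absorbs $|u|\,\|N\|_{\mL(V,Y)}\|y\|_V\|y\|_Y$ into $\tfrac{\nu}{2}\|y\|_V^2$ at the cost of a term $C|u(t)|^2\|y(t)\|_Y^2$, which leads to a Gronwall-type differential inequality for $t\mapsto\|y(t)\|_Y^2$ with coefficients in $L^1(0,T)$ depending only on $\lambda,\nu,N,B$ and on $|u|^2$; this yields \eqref{eq:RegEstim1}. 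Integrating the same inequality over $(0,T)$ and inserting the $L^\infty(0,T;Y)$-bound just obtained controls $\|y\|_{L^2(0,T;V)}$, and reading the equation backwards bounds $\frac{\mathrm d}{\mathrm d t}y$ in $L^2(0,T;V^*)$; here one uses \eqref{ass:A2}, which implies that $N$ extends to an operator in $\mL(Y,V^*)$ with $\|Ny(t)u(t)\|_{V^*}\le\|N^*\|_{\mL(V,Y)}\|y(t)\|_Y|u(t)|$, so that the already established $L^\infty(0,T;Y)$-bound on $y$ suffices. This proves \eqref{eq:KK7}.

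For existence I would run a Faedo--Galerkin scheme: choose finite-dimensional subspaces $V_n\subset V$ increasing to a dense subspace of $V$, solve the projected systems of ordinary differential equations (globally solvable because of the a~priori bounds, which are uniform in $n$), and extract weak-$*$ and weak limits of $(y_n)$ in $L^\infty(0,T;Y)\cap L^2(0,T;V)$ and of $(\frac{\mathrm d}{\mathrm d t}y_n)$ in $L^2(0,T;V^*)$. The only term that is not weakly continuous is $Ny_nu$; since $V\hookrightarrow Y\hookrightarrow V^*$ with the first embedding compact, the Aubin--Lions lemma yields $y_n\to y$ strongly in $L^2(0,T;Y)$, hence $Ny_nu\to Nyu$ in $L^1(0,T;Y)$, which is enough to pass to the limit in the weak formulation; the limit $y$ then belongs to $W(0,T)$. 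Uniqueness and the Lipschitz estimate \eqref{eq:RegEstim2} follow by subtracting the equations for two solutions with the same control: $w=\tilde y-y$ solves $\frac{\mathrm d}{\mathrm d t}w=Aw+Nwu$ with $w(0)=\tilde y_0-y_0$, and testing with $w$, absorbing the bilinear term via \eqref{ass:A1} as above, and applying Gronwall's lemma gives \eqref{eq:RegEstim2}.

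Assume now in addition that $y\in L^2(0,\infty;Y)$. Returning to the differential inequality in the form $\frac{\mathrm d}{\mathrm d t}\|y\|_Y^2+\nu\|y\|_V^2\le(2\lambda+1)\|y\|_Y^2+C|u|^2\|y\|_Y^2+C|u(t)|^2$ and integrating over $(0,\infty)$, the contribution $\int_0^\infty(2\lambda+1)\|y\|_Y^2$ is now finite --- a fixed multiple of $\|y\|_{L^2(0,\infty;Y)}^2$ --- and does \emph{not} grow with $T$, so Gronwall's integral form applied to the remaining term $\int_0^t|u(s)|^2\|y(s)\|_Y^2\,\mathrm ds$ produces \eqref{eq:RegEstimBis1} with the exponential factor $e^{C\|u\|_{L^2(0,\infty)}^2}$. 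Keeping $\nu\int_0^\infty\|y\|_V^2$ on the left-hand side of the integrated inequality and discarding $\|y(T)\|_Y^2\ge0$ gives \eqref{eq:RegEstimBis2}, and reading the equation backwards once more, using \eqref{ass:A2}, gives \eqref{eq:RegEstimBis3}. Finally, \eqref{eq:RegEstimBis1}--\eqref{eq:RegEstimBis3} show that $t\mapsto\|y(t)\|_Y^2$ is absolutely continuous on $[0,\infty)$ with derivative $2\langle\frac{\mathrm d}{\mathrm d t}y,y\rangle_{V^*,V}\in L^1(0,\infty)$, hence it has a limit as $t\to\infty$; since it also belongs to $L^1(0,\infty)$, this limit must be $0$.

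The step I expect to be the main obstacle is the consistent treatment of the bilinear term $Nyu$: it has to be estimated in $V^*$ through $\|N^*\|_{\mL(V,Y)}$ (so that only $\|y\|_Y$, and not $\|y\|_V$, is needed wherever the latter is not available), absorbed against the coercivity estimate \eqref{ass:A1} without destroying the dissipative $\nu\|y\|_V^2$ term, and --- at the Galerkin level --- accompanied by enough strong compactness to pass to the limit in $Ny_nu$. The remaining manipulations (Young and Gronwall inequalities, passing to the limit $T\to\infty$) are routine.
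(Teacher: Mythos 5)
Your proposal is correct and follows essentially the same route as the paper: testing with $y$, using \eqref{ass:A1} and Young's inequality to absorb the bilinear term into the dissipative $\nu\|y\|_V^2$, Gronwall for \eqref{eq:RegEstim1} and \eqref{eq:RegEstim2}, Galerkin with Aubin--Lions compactness for existence, integration of the same differential inequality for the infinite-horizon bounds, and the identity $\frac{\dd}{\dd t}\|y\|_Y^2=2\langle \frac{\dd}{\dd t}y,y\rangle_{V^*,V}$ combined with $\|y(\cdot)\|_Y^2\in L^1(0,\infty)$ for the decay at infinity. Your variant of the last step (an absolutely continuous $L^1$ function whose derivative is integrable has limit $0$) is an equivalent repackaging of the paper's subsequence argument.
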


\begin{proposition}\label{prop:kk1}
If problem \eqref{eqProblem} admits a feasible control (i.e.\@ a control $u \in L^2(0,\infty)$ such that $\mathcal{J}(u,y_0) < \infty$), then it has a solution.
\end{proposition}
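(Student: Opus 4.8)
\emph{Proof proposal.} The plan is to argue by the direct method of the calculus of variations. Since $\mathcal{J}(\cdot,y_0)\ge 0$ and, by assumption, problem \eqref{eqProblem} admits a feasible control, the value $m:=\mathcal{V}(y_0)$ is finite and nonnegative. Let $(u_n)_n$ be a minimizing sequence, so that $\mathcal{J}(u_n,y_0)\to m$, and set $y_n:=S(u_n,y_0)$. From $\tfrac{\alpha}{2}\|u_n\|_{L^2(0,\infty)}^2\le \mathcal{J}(u_n,y_0)$ we get that $(u_n)$ is bounded in $L^2(0,\infty)$, and from $\tfrac12\|y_n\|_{L^2(0,\infty;Y)}^2\le \mathcal{J}(u_n,y_0)$ that $(y_n)$ is bounded in $L^2(0,\infty;Y)$. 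After extraction, $u_n\rightharpoonup \bar u$ weakly in $L^2(0,\infty)$.

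The delicate point is to obtain enough compactness of the states to pass to the limit in the bilinear term $Ny_nu_n$, which is not weakly continuous in $(y_n,u_n)$. Fix $T>0$. Since $\|u_n\|_{L^2(0,T)}\le\|u_n\|_{L^2(0,\infty)}$ is bounded, estimate \eqref{eq:KK7} of Lemma \ref{lemma:RegEstim} shows that $(y_n)$ is bounded in $W(0,T)$. Because the embedding $V\hookrightarrow Y$ is compact and $Y\hookrightarrow V^*$ is continuous, the Aubin--Lions lemma yields a compact embedding $W(0,T)\hookrightarrow L^2(0,T;Y)$. Combining this with a diagonal extraction over $T\in\mathbb{N}$, we may assume that $y_n\to\bar y$ strongly in $L^2(0,T;Y)$, $y_n\rightharpoonup\bar y$ weakly in $L^2(0,T;V)$, and $\tfrac{\dd}{\dd t}y_n\rightharpoonup\tfrac{\dd}{\dd t}\bar y$ weakly in $L^2(0,T;V^*)$, for every $T>0$; in particular $\bar y\in W(0,T)$ for all $T$, and $\bar y(0)=y_0$ since the evaluation $y\mapsto y(0)$ is continuous (hence weakly continuous) from $W(0,T)$ to $Y$.

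Next I would pass to the limit in the weak formulation of \eqref{eq2.1}, tested against $w\in V$ and a scalar $\phi\in C^\infty([0,T])$. The terms involving $\tfrac{\dd}{\dd t}y_n$, $a(y_n,w)$, and $\langle B,w\rangle_Y u_n$ converge to the corresponding limits because each defines a continuous linear functional of, respectively, $\tfrac{\dd}{\dd t}y_n\in L^2(0,T;V^*)$, $y_n\in L^2(0,T;V)$, and $u_n\in L^2(0,T)$, along which we have weak convergence. For the bilinear term, using \eqref{ass:A2} we write $\langle Ny_n(t),w\rangle_Y=\langle y_n(t),N^*w\rangle_Y$ with $N^*w\in Y$ fixed; the strong convergence $y_n\to\bar y$ in $L^2(0,T;Y)$ then gives $\langle Ny_n(\cdot),w\rangle_Y\to\langle N\bar y(\cdot),w\rangle_Y$ strongly in $L^2(0,T)$, so that the product with $u_n\rightharpoonup\bar u$ (weakly in $L^2(0,T)$) converges, $\int_0^T\langle Ny_n,w\rangle_Y u_n\,\phi\dd t\to\int_0^T\langle N\bar y,w\rangle_Y\bar u\,\phi\dd t$. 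Hence $\bar y$ is a solution of \eqref{eq2.1} with control $\bar u$, and by the uniqueness part of Lemma \ref{lemma:RegEstim}, $\bar y=S(\bar u,y_0)$.

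Finally I would verify $\mathcal{J}(\bar u,y_0)\le m$ by weak lower semicontinuity. For the control term, $\|\bar u\|_{L^2(0,\infty)}^2\le\liminf_n\|u_n\|_{L^2(0,\infty)}^2$. For the running cost, for each fixed $T$ the strong convergence gives $\int_0^T\|\bar y(t)\|_Y^2\dd t=\lim_n\int_0^T\|y_n(t)\|_Y^2\dd t\le\liminf_n\|y_n\|_{L^2(0,\infty;Y)}^2$, and letting $T\to\infty$ yields $\|\bar y\|_{L^2(0,\infty;Y)}^2\le\liminf_n\|y_n\|_{L^2(0,\infty;Y)}^2$. Adding these inequalities gives $\mathcal{J}(\bar u,y_0)\le\liminf_n\mathcal{J}(u_n,y_0)=m$, so $\bar u$ is a minimizer (and in particular feasible). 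The main obstacle, handled in the second and third paragraphs, is precisely the failure of weak continuity of the nonlinearity $Nyu$: one must first upgrade to strong convergence of the states on bounded time intervals, which is where the compactness of $V\hookrightarrow Y$ and the a priori bound \eqref{eq:KK7} are essential.
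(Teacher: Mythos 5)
Your proposal is correct and follows essentially the same route as the paper's proof: the direct method with a minimizing sequence, Aubin--Lions compactness to upgrade to strong convergence of the states in $L^2(0,T;Y)$, the strong-times-weak argument for the bilinear term $\langle Ny_n,w\rangle_Y u_n = \langle y_n,N^*w\rangle_Y u_n$, and weak lower semicontinuity of the cost. The only cosmetic difference is that the paper obtains a single global bound on $(y_n)$ in $W(0,\infty)$ via estimates \eqref{eq:RegEstimBis1}--\eqref{eq:RegEstimBis3}, whereas you work on bounded intervals via \eqref{eq:KK7} and a diagonal extraction; both are valid.
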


The proof uses standard arguments and it is therefore given in the Appendix. Note that in Section 5, we construct a feedback law generating feasible controls (for small values of $\| y_0 \|_Y$).

\begin{remark}\label{KK1}
{\em We recall some additional properties of the operator $A$  generated by
$a$. First, it is well known that $A$ generates an analytic semigroup, see e.g.\@
\cite[Sections 3.6 and 5.4]{Tan79}, that we denote by $e^{At}$. Let us set $A_0=A-\lambda I$, if $\lambda >0$ and $A_0=A$ otherwise. Then $-A_0$ has a bounded inverse in $Y$, see
\cite[page 75]{Tan79}, and in particular it is maximal accretive, see
\cite[20]{Tan79}. We have $\mD(A_0)= \mD(A)$ and the fractional powers of
$-A_0$ are well-defined.
In particular,
$\mD((-A_0)^{\frac{1}{2}})=[\mD(-A_0),Y]_{\frac{1}{2}}:=(\mD(-A_0),Y)_
{2,\frac{1}{2}}$ the real interpolation space with indices 2 and $\frac{1}{2}$,
see \cite[Proposition 6.1, Part II, Chapter 1]{Benetal07}.}
\end{remark}
For the following regularity result, we require that
\begin{equation}\label{ass:A3}\tag{A3}
[\mD(-A_0),Y]_{\frac{1}{2}} = [\mD(-A_0^*),Y]_{\frac{1}{2}}=V.
\end{equation}

\begin{lemma}\label{le2.2}
Let \eqref{ass:A1}-\eqref{ass:A3} hold. Then, there exists a continuous function $c$ such that for all $T>0$, for all $y_0\in V$, and for all $u \in L^2(0,T;Y)$ the solution to \eqref{eq2.1} satisfies $y \in H^1(0,T;Y)\cap L^2(0,T;\mD(-A_0))$ and the following estimate holds:
\begin{equation}\label{eq:KK5}
\|y\|_{H^1(0,T;Y)\cap L^2(0,T;\mD(-A_0))} \le c(T,\|y_0\|_V,\|u\|_{L^2(0,T)}).
\end{equation}
\end{lemma}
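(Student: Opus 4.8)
The plan is to upgrade the $W(0,T)$‑solution furnished by Lemma~\ref{lemma:RegEstim} by means of maximal parabolic $L^2$‑regularity, using a contraction argument localized according to the $L^2$‑mass of $u$. Write $\mathcal{E}(\sigma,\tau):=H^1(\sigma,\tau;Y)\cap L^2(\sigma,\tau;\mD(-A_0))$; recall $\mD(-A_0)=\mD(A)$ with equivalent graph norms. Since $A$ generates an analytic semigroup on the Hilbert space $Y$, de Simon's theorem yields, for every $g\in L^2(\sigma,\tau;Y)$ and every initial datum in the trace space $[\mD(A),Y]_{1/2}$, a unique solution of $\dot z=Az+g$ lying in $\mathcal{E}(\sigma,\tau)$, with a maximal‑regularity estimate whose constant can be taken to depend only on $A$ and $T$ for all $0\le\sigma<\tau\le T$ (extend $g$ by $0$ past $\tau$ and use causality, together with monotonicity of the constant in $\tau-\sigma$). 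By \eqref{ass:A3} and Remark~\ref{KK1} the trace space equals $V=\mD((-A_0)^{1/2})$. I would also use, with constants independent of $\tau-\sigma\le T$: \textbf{(i)} the embedding $\mathcal{E}(\sigma,\tau)\hookrightarrow C([\sigma,\tau];V)$ restricted to functions vanishing at $\sigma$ — the classical trace theorem for the maximal‑regularity space, in which the embedding constant is uniform once the left endpoint value is zero; and \textbf{(ii)} $\|e^{A(t-\sigma)}\xi\|_V\le e^{\lambda(t-\sigma)}\|\xi\|_V$ for $\xi\in V$, a consequence of $-A_0$ being maximal accretive, so that $e^{A_0\cdot}$ is a contraction semigroup commuting with $(-A_0)^{1/2}$.

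The only genuine difficulty is the bilinear term $Nyu$. Lemma~\ref{lemma:RegEstim} only yields $y\in L^2(0,T;V)$, so with $u\in L^2(0,T)$ the product $Nyu$ lies merely in $L^1(0,T;Y)$, which is too weak to feed into maximal regularity. The remedy is that as soon as a candidate $w$ is known to belong to $\mathcal{E}$ on an interval $I$, one has $Nw\in C(I;Y)\subset L^\infty(I;Y)$ by \textbf{(i)}--\textbf{(ii)}, hence $Nwu+Bu\in L^2(I;Y)$; moreover the resulting contraction estimate carries the factor $\|u\|_{L^2(I)}$, not $|I|$, so it suffices to make $\|u\|_{L^2(I)}$ small on each piece.

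Concretely, fix $\delta>0$ so small that $C_\ast\|N\|_{\mL(V,Y)}\,\delta\le\tfrac12$, where $C_\ast$ aggregates the maximal‑regularity constant and the constant from \textbf{(i)} (so $\delta$ depends only on $A,T,N$); choose $0=t_0<\dots<t_m=T$ with $\|u\|_{L^2(t_{j-1},t_j)}\le\delta$ for each $j$, which is possible with $m\le 1+\|u\|_{L^2(0,T)}^2/\delta^2$, and set $I_j=[t_{j-1},t_j]$. I claim, by induction on $j$, that $y(t_{j-1})\in V$, that $y|_{I_j}\in\mathcal{E}(t_{j-1},t_j)$, and that $\|y\|_{\mathcal{E}(t_{j-1},t_j)}\le a+b\,\|y(t_{j-1})\|_V$ for constants $a,b$ depending only on $A,T,N,B$. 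For the inductive step, on the closed affine set $\{w\in\mathcal{E}(t_{j-1},t_j):w(t_{j-1})=y(t_{j-1})\}$ let $\Phi(w)$ solve $\dot z=Az+Nwu+Bu$ on $I_j$ with $z(t_{j-1})=y(t_{j-1})$; maximal regularity makes $\Phi$ a self‑map, and since $w_1-w_2$ vanishes at $t_{j-1}$,
\begin{equation*}
\|\Phi(w_1)-\Phi(w_2)\|_{\mathcal{E}(t_{j-1},t_j)}\le C\,\|N(w_1-w_2)\,u\|_{L^2(I_j;Y)}\le C\|N\|_{\mL(V,Y)}\,\|w_1-w_2\|_{C(I_j;V)}\,\|u\|_{L^2(t_{j-1},t_j)}\le\tfrac12\,\|w_1-w_2\|_{\mathcal{E}(t_{j-1},t_j)},
\end{equation*}
so $\Phi$ is a contraction. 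Its fixed point lies in $\mathcal{E}(t_{j-1},t_j)\subset W(t_{j-1},t_j)$ and solves \eqref{eq2.1} on $I_j$ with initial value $y(t_{j-1})$, hence equals $y|_{I_j}$ by the uniqueness assertion of Lemma~\ref{lemma:RegEstim}. Inserting $z=\Phi(z)$ into the maximal‑regularity estimate gives the asserted $\mathcal{E}$‑bound, and \textbf{(i)}--\textbf{(ii)} give $y(t_j)\in V$ with $\|y(t_j)\|_V\le C\big(a+(b+1)\|y(t_{j-1})\|_V\big)$, which closes the induction. Iterating this affine recursion from $\|y(t_0)\|_V=\|y_0\|_V$ bounds each $\|y\|_{\mathcal{E}(t_{j-1},t_j)}$ by a quantity depending only on $\|y_0\|_V$, $m$ and the fixed constants; summing $\|y\|_{\mathcal{E}(0,T)}^2=\sum_{j=1}^m\|y\|_{\mathcal{E}(t_{j-1},t_j)}^2$ and using $m\le 1+\|u\|_{L^2(0,T)}^2/\delta^2$ yields the claimed estimate with a function $c$ that is continuous — in fact nondecreasing — in $T$, $\|y_0\|_V$ and $\|u\|_{L^2(0,T)}$.

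The crux, and the step needing care, is exactly the combination just flagged: the bilinear forcing has only $L^1$‑in‑time integrability a priori, and the partition intervals may be arbitrarily short (only $\|u\|_{L^2(I_j)}$, not $|I_j|$, is prescribable). This is what forces running the contraction on the subspace of functions vanishing at the left endpoint — where the trace embedding constant is uniform — and treating the inhomogeneous initial datum separately through the contractivity of $e^{A_0\cdot}$; both are classical facts of maximal parabolic regularity but should be invoked explicitly here.
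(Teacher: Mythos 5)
Your argument is correct, but it takes a genuinely different route from the paper. The paper's proof is a two-line conjugation trick: it sets $z=(-A_0)^{1/2}y$, observes that $z$ solves an equation of exactly the form \eqref{eq2.1} with $\tilde N=(-A_0)^{1/2}N(-A_0)^{-1/2}\in\mL(V,Y)$ and $\tilde B=(-A_0)^{1/2}B\in V^*$ (this is where the hypothesis $N\in\mL(\mD(A),V)$ from \eqref{ass:A2} and the identification \eqref{ass:A3} enter), and then simply reapplies the already-established energy estimate \eqref{eq:KK7} of Lemma \ref{lemma:RegEstim} at the shifted level $\mD(A)\subset V\subset Y$ of the Gelfand scale; translating back gives \eqref{eq:KK5}. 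You instead work directly on the original equation with maximal parabolic $L^2$-regularity, a partition of $[0,T]$ calibrated by the $L^2$-mass of $u$, and a contraction on each piece run on the affine subspace of functions with prescribed left-endpoint value, exploiting the uniform trace constant for functions vanishing at the left endpoint. What each approach buys: the paper's is shorter, needs no external maximal-regularity machinery, and reuses its own Lemma \ref{lemma:RegEstim}, but it leans on the full strength of \eqref{ass:A2} (in particular $N\in\mL(\mD(A),V)$); yours only uses $N\in\mL(V,Y)$ and \eqref{ass:A3}, so it establishes the lemma under nominally weaker structural assumptions on $N$, at the cost of invoking de Simon's theorem, the trace theorem for the maximal-regularity space, and a more delicate bookkeeping of constants over subintervals. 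The points you flag as needing care (uniformity of the trace constant only on the subspace vanishing at the left endpoint, the separate treatment of the inhomogeneous initial datum via contractivity of $e^{A_0 t}$ on $V$, and the identification of the fixed point with $y|_{I_j}$ via the uniqueness in Lemma \ref{lemma:RegEstim}) are exactly the right ones, and your handling of them is sound.
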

\begin{proof}
Let $y$ denote the solution to \eqref{eq2.1} and define
$z=(-A_0)^{\frac{1}{2}}y$. Then, $z$ satisfies
\begin{equation}\label{eq:KK6}
\left\{	\begin{array} {l}
\frac{\dd}{\dd t}z(t)=Az(t)+\tilde Ny(t)u(t)+ \tilde Bu(t), \quad \text{for}
\;t>0, \\[1.5ex]
z(0)= (-A_0)^{\frac{1}{2}}y_0,
\end{array} \right.
\end{equation}
where $\tilde N=(-A_0)^{\frac{1}{2}}N  (-A_0)^{-\frac{1}{2}}$  and $\tilde B=
(-A_0)^{\frac{1}{2}}B$. Since $(-A_0)^{-\frac{1}{2}} \in
{\mathcal{L}}(V,\mD(-A_0)) $
 and $(-A_0)^{\frac{1}{2}} \in {\mathcal{L}} (V,Y)$, we have $\tilde N \in
{\mathcal{L}}(V,Y)$ and $\tilde B \in V^*$, where we use \eqref{ass:A3}. Now we
can apply \eqref{eq:KK7} of Lemma \ref{lemma:RegEstim}  to obtain that $z\in
H^1(0,T;V^*)\cap L^2(0,T;V)$ and this implies that $y \in H^1(0,T;Y)\cap
L^2(0,T;\mD(-A_0))$ and that \eqref{eq:KK5} holds.
\end{proof}

\begin{remark}\label{rem.kk2}
{\em For finite dimensional  systems with $V=Y=\R^n$, assumptions \eqref{ass:A1}, \eqref{ass:A2}, and \eqref{ass:A3} are trivially satisfied. In Section \ref{sec:FP}, we describe an infinite-dimensional control problem associated with a Fokker-Planck equation for which the general assumptions are satisfied.}
\end{remark}

\subsection{Notation for multilinear forms and differentiability properties}

We denote by $B_Y(\delta)$ the closed ball of $Y$ with radius $\delta$ and center 0.
For $k \geq 1$, we make use of the following norm:
\begin{equation} \label{eq:NormYk}
\| (y_1,...,y_k) \|_{Y^k}= \max_{i=1,...,k} \| y_i \|_Y.
\end{equation}
We denote by $B_{Y^k}(\delta)$ the closed ball of $Y^k$ with radius $\delta$ and center 0, for the norm $\| \cdot \|_{Y^k}$.
For $k \geq 1$, we say that $\mathcal{T}\colon Y^k \rightarrow \R$ is a bounded multilinear form if for all $i \in \{ 1,...,k \}$ and for all $z_1$,...,$z_{i-1}$,$z_{i+1}$,...,$z_k \in Y^{k-1}$, the mapping
$z
\in Y \mapsto \mathcal{T}(z_1,...,z_{i-1},z,z_{i+1},...,z_k)$ is linear and
\begin{equation} \label{eq:OperatorNormTensor}
\| \mathcal{T} \| := \sup_{y \in B_{Y^k}(1)} | \mathcal{T}(y) | < \infty.
\end{equation}
We denote by $\mathcal{M}(Y^k,\mathbb R)$ the set of bounded multilinear forms.
For all $\mathcal{T} \in \mathcal{M}(Y^k,\mathbb{R})$ and for all $(z_1,...,z_k) \in Y^k$, \begin{equation} \label{eq:OperatorNormTensor2}
|\mathcal{T}(z_1,...,z_k)| \leq \| \mathcal{T} \| \, \prod_{i=1}^k \| z_i
\|_Y.
\end{equation}
Bounded multilinear forms $\mathcal{T} \in \mathcal{M}(Y^k,\mathbb{R})$ are said to be symmetric if for all $z_1$,...,$z_k \in Y^k$ and for all permutations $\sigma$ of $\{ 1,...,k \}$,
\begin{equation*}
\mathcal{T}(z_{\sigma(1)},...,z_{\sigma(k)})= \mathcal{T}(z_1,...,z_k).
\end{equation*}
Given two multilinear forms $\mT_1\in \mM(Y^k,\R)$ and $\mT_2\in \mM(Y^\ell,R)$, we denote by $\mathcal{T}_1 \otimes \mathcal{T}_2$ the bounded multilinear mapping which is defined for all $(y_1,...,y_{k+ \ell}) \in Y^{k+\ell}$ by
\begin{equation*}
(\mT_1\otimes \mT_2)(y_1,\dots,  y_{k+\ell}) = \mT_1(y_1,...,y_k) \mT_2(y_{k+1},...,y_{k + \ell}).
\end{equation*}
For $y \in Y$, we denote
\begin{equation*}
y^{\otimes k}= (y,...,y) \in Y^k.
\end{equation*}

\begin{lemma} \label{lemma:continuousTensors1}
Let $\mT\colon Y^k \rightarrow \R$ be a multilinear form. Then, $\mT \in
\mM(Y^k,\R)$ if and only if it is continuous.
In this case, it is also Lipschitz continuous on bounded subsets of $Y^k$.  More
precisely, for all $M>0$, for all $y$ and $v \in B_{Y^k}(M)$,
\begin{equation}\label{eq:13a}
|\mathcal{T}(y) - \mathcal{T}(v)|
\leq k M^{k-1} \  \| T \| \  \| y-v \|_{Y^k}.
\end{equation}
\end{lemma}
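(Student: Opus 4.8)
The plan is to prove the two implications separately, obtaining the Lipschitz estimate \eqref{eq:13a} as a byproduct of the ``boundedness $\Rightarrow$ continuity'' direction, and proving ``continuity $\Rightarrow$ boundedness'' by a homogeneity/scaling argument.

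First I would assume $\mathcal{T} \in \mathcal{M}(Y^k,\mathbb{R})$, fix $M > 0$ and $y,v \in B_{Y^k}(M)$, and write $\mathcal{T}(y) - \mathcal{T}(v)$ as a telescoping sum in which the coordinates are switched one at a time,
\begin{equation*}
\mathcal{T}(y) - \mathcal{T}(v) = \sum_{i=1}^k \big[ \mathcal{T}(v_1,\dots,v_{i-1},y_i,y_{i+1},\dots,y_k) - \mathcal{T}(v_1,\dots,v_{i-1},v_i,y_{i+1},\dots,y_k) \big].
\end{equation*}
By linearity in the $i$-th slot, the $i$-th summand equals $\mathcal{T}(v_1,\dots,v_{i-1},y_i-v_i,y_{i+1},\dots,y_k)$, whose modulus is at most $\| \mathcal{T} \| \, M^{k-1} \| y_i - v_i \|_Y$ by \eqref{eq:OperatorNormTensor2}, since $\|v_j\|_Y \leq M$ and $\|y_j\|_Y \leq M$. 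Using $\| y_i - v_i \|_Y \leq \| y-v \|_{Y^k}$ and summing the $k$ terms yields \eqref{eq:13a}; local Lipschitz continuity in particular gives continuity.

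For the converse I would assume $\mathcal{T}$ is continuous. Since $\mathcal{T}(0) = 0$ by multilinearity, continuity at the origin provides $\delta > 0$ such that $|\mathcal{T}(z)| \leq 1$ whenever $z \in B_{Y^k}(\delta)$. For an arbitrary $y = (y_1,\dots,y_k) \in Y^k$: if some coordinate vanishes then $\mathcal{T}(y) = 0$; otherwise, rescaling each coordinate to $\delta y_i / \| y_i \|_Y$ and exploiting multilinearity gives
\begin{equation*}
|\mathcal{T}(y_1,\dots,y_k)| = \Big( \prod_{i=1}^k \frac{\| y_i \|_Y}{\delta} \Big) \, \Big| \mathcal{T}\Big( \frac{\delta y_1}{\| y_1 \|_Y},\dots,\frac{\delta y_k}{\| y_k \|_Y} \Big) \Big| \leq \delta^{-k} \prod_{i=1}^k \| y_i \|_Y,
\end{equation*}
because the rescaled tuple lies in $B_{Y^k}(\delta)$. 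Taking the supremum over $B_{Y^k}(1)$ shows $\| \mathcal{T} \| \leq \delta^{-k} < \infty$, hence $\mathcal{T} \in \mathcal{M}(Y^k,\mathbb{R})$.

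There is no real obstacle here: the argument is elementary multilinear algebra. The only points needing a little care are the bookkeeping in the telescoping sum (checking that every intermediate tuple stays in $B_{Y^k}(M)$ so that \eqref{eq:OperatorNormTensor2} applies with constant $M^{k-1}$) and the separate treatment of the degenerate case in the scaling step when one of the $y_i$ is zero.
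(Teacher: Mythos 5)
Your proof is correct and follows essentially the same route as the paper: the Lipschitz estimate \eqref{eq:13a} is obtained by the same telescoping decomposition combined with \eqref{eq:OperatorNormTensor2}, and the converse direction is the standard scaling argument that the paper dismisses with ``one can easily check'' (stated there in contrapositive form). The only difference is that you write out that elementary step explicitly, which is fine.
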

The proof is given in the Appendix.

\begin{lemma} \label{lemma:continuousTensors2}
Let $\mT\in\mM(Y^k,\R).$ Then, it is also infinitely many times differentiable. In particular, for all $y=(y_1,...,y_k)
\in Y^k$ and $z=(z_1,...,z_k) \in Y^k$,
\begin{equation} \label{eq:DifferentiabilityTensors}
D\mathcal{T}(y_1,...,y_k)(z_1,...,z_k)
= \sum_{i=1}^k \mathcal{T}(y_1,...,y_{i-1},z_i,y_{i+1},...,y_k).
\end{equation}
Moreover, for all $M>0$, for all $y$ and $\tilde{y} \in B_{Y^k}(M)$,
\begin{align}
& \big| D\mathcal{T}(y)z \big| \leq k M^{k-1} \| z \|_{Y^k} \label{eq:DifferentiabilityTensors2} \\
& \big| D\mathcal{T}(\tilde{y}) z - D\mathcal{T}(y) z \big|
\leq k (k-1) M^{k-2} \  \| \mathcal{T} \| \  \| \tilde{y}-y \|_{Y^k} \
\| z \|_{Y^k}.\label{eq:DifferentiabilityTensors3}
\end{align}
\end{lemma}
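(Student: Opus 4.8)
The plan is to first establish Fréchet differentiability at an arbitrary point together with formula \eqref{eq:DifferentiabilityTensors}, then to extract the two quantitative bounds, and finally to obtain $C^\infty$-regularity by an induction on the degree $k$.

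\textbf{Differentiability and formula \eqref{eq:DifferentiabilityTensors}.} First I would fix $y=(y_1,\dots,y_k)\in Y^k$ and an increment $z=(z_1,\dots,z_k)\in Y^k$ and expand $\mT(y_1+z_1,\dots,y_k+z_k)$ by multilinearity. This produces $2^k$ terms indexed by the subsets $I\subseteq\{1,\dots,k\}$, the term attached to $I$ carrying $z_i$ in slot $i$ for $i\in I$ and $y_i$ in slot $i$ for $i\notin I$. The term $I=\emptyset$ is $\mT(y)$, the $k$ terms with $|I|=1$ sum precisely to the right-hand side $\ell(z)$ of \eqref{eq:DifferentiabilityTensors}, and the remaining terms form a remainder $r(z)$. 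Bounding each term of $r(z)$ by $\|\mT\|\prod_{i\in I}\|z_i\|_Y\prod_{i\notin I}\|y_i\|_Y$ via \eqref{eq:OperatorNormTensor2}, one gets, for $\|y\|_{Y^k}\le M$,
\[
|r(z)|\le \|\mT\|\sum_{m=2}^{k}\binom{k}{m}M^{k-m}\|z\|_{Y^k}^{m}=\mathcal{O}(\|z\|_{Y^k}^2)=o(\|z\|_{Y^k})\quad\text{as }z\to0 .
\]
Since also $|\ell(z)|\le kM^{k-1}\|\mT\|\,\|z\|_{Y^k}$ by \eqref{eq:OperatorNormTensor2}, the form $\ell$ is bounded and linear, hence is the Fréchet derivative $D\mT(y)$; this establishes \eqref{eq:DifferentiabilityTensors}, and the last bound is \eqref{eq:DifferentiabilityTensors2}.

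\textbf{Lipschitz bound \eqref{eq:DifferentiabilityTensors3}.} Next, for $y,\tilde y\in B_{Y^k}(M)$ and $z\in Y^k$, I would subtract the two copies of \eqref{eq:DifferentiabilityTensors} and regroup slot by slot:
\[
D\mT(\tilde y)z-D\mT(y)z=\sum_{i=1}^{k}\Big(\mT(\tilde y_1,\dots,z_i,\dots,\tilde y_k)-\mT(y_1,\dots,z_i,\dots,y_k)\Big),
\]
where in each bracket $z_i$ occupies slot $i$. For fixed $i$ and fixed $z_i$, the map of the remaining $k-1$ arguments $w\mapsto\mT(w_1,\dots,z_i,\dots,w_{k-1})$ is a bounded $(k-1)$-multilinear form of norm at most $\|\mT\|\,\|z_i\|_Y$, so Lemma \ref{lemma:continuousTensors1} (estimate \eqref{eq:13a}) applied on $B_{Y^{k-1}}(M)$ bounds the $i$-th bracket by $(k-1)M^{k-2}\|\mT\|\,\|z_i\|_Y\,\|\tilde y-y\|_{Y^k}$. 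Summing over $i=1,\dots,k$ yields \eqref{eq:DifferentiabilityTensors3}; in particular $y\mapsto D\mT(y)$ is continuous, so $\mT\in C^1$.

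\textbf{Infinite differentiability.} Finally, I would argue by induction on $k$, stating the induction in the slightly broader setting of bounded $k$-multilinear maps valued in an arbitrary real Banach space, since the expansion above never used that the target is $\R$. For $k=1$ a bounded linear map is its own derivative, hence smooth. For $k\ge2$, the first step shows $D\mT(y)=\sum_{i=1}^{k}L_i(y)$, where $y\mapsto L_i(y)$ depends only on the coordinates $y_j$ with $j\ne i$ and is a bounded $(k-1)$-multilinear map from $Y^{k-1}$ into the Banach space $\mathcal{L}(Y^k,\R)$; by the induction hypothesis each $L_i$, hence $D\mT$, is $C^\infty$, so $\mT$ is $C^\infty$. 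I do not expect a genuine obstacle here: the only two points that need care are the combinatorial bookkeeping of the $2^k$ terms in the first step (to certify that the remainder is quadratic in $\|z\|_{Y^k}$) and phrasing the induction so that the $\mathcal{L}(Y^k,\R)$-valued first derivative is covered by the same statement.
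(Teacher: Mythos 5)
Your proposal is correct and follows essentially the same route as the paper: the derivative formula and remainder bound come from multilinearity together with the operator-norm estimate, the Lipschitz bound \eqref{eq:DifferentiabilityTensors3} is obtained by freezing $z_i$ in slot $i$, viewing the result as a $(k-1)$-linear form of norm at most $\| \mathcal{T} \| \, \| z_i \|_Y$ and applying Lemma \ref{lemma:continuousTensors1}, and smoothness follows by induction from the observation that $D\mathcal{T}$ is a sum of bounded multilinear maps. You merely spell out the $2^k$-term expansion and the Banach-space-valued induction that the paper leaves implicit (and, correctly, retain the factor $\| \mathcal{T} \|$ in \eqref{eq:DifferentiabilityTensors2}, which the paper's statement omits).
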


\begin{proof}
The Fr\'echet differentiability of $\mT \in \mM(Y^k,\R),$ as well as formula
\eqref{eq:DifferentiabilityTensors} follow from \eqref{eq:13a},
taking $v_1= y_1+ \theta z_1$,...,$v_k= y_k + \theta z_k$. Formula
\eqref{eq:DifferentiabilityTensors2} follows directly from formula \eqref{eq:DifferentiabilityTensors}.
Formula \eqref{eq:DifferentiabilityTensors3} follows from Lemma
\ref{lemma:continuousTensors1}, from \eqref{eq:DifferentiabilityTensors}, and
from the following relation:
\begin{equation*}
\| \mathcal{T}(\cdot,...,\cdot,z_i,\cdot,...,\cdot) \| = \| z_i \|_Y \, \|
\mathcal{T} \|.
\end{equation*}
Finally, one can prove by induction that $\mathcal{T}$ is infinitely  many times
differentiable, observing that $D\mathcal{T}(y_1,...,y_k)(z_1,...,z_k)$ can be
written as a sum of bounded multilinear forms.
\end{proof}

The following lemma provides a useful chain rule.

\begin{lemma} \label{lemma:ChainRule}
Let $f \in W^{1,1}(0,\infty;Y^k)$ and $\mT\in\mM(Y^k,\R).$ Then,
$F:=\mathcal{T} \circ f$ lies in $W^{1,1}(0,\infty)$ and satisfies
\begin{equation*}
F'(t)= D\mathcal{T}(f(t)) f'(t), \quad \text{for a.e.\@ $t \geq 0$}.
\end{equation*}
\end{lemma}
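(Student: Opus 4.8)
The plan is to proceed by a standard density/approximation argument, exploiting that $\mathcal{T}$ is Lipschitz on bounded sets and that $W^{1,1}(0,\infty;Y^k)$ functions are (after modification on a null set) absolutely continuous with values in a bounded subset of $Y^k$. First I would record the elementary facts: since $f \in W^{1,1}(0,\infty;Y^k)$, the function $f$ has a representative that is absolutely continuous on $[0,\infty)$, is differentiable a.e.\@ with $f' \in L^1(0,\infty;Y^k)$, and satisfies $f(t) = f(0) + \int_0^t f'(s)\dd s$; moreover $\| f(t) \|_{Y^k}$ is bounded, say by some $M > 0$, uniformly in $t$ (because $\| f(t) \|_{Y^k} \leq \| f(0) \|_{Y^k} + \| f' \|_{L^1(0,\infty;Y^k)}$). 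Consequently $F = \mathcal{T} \circ f$ is bounded on $[0,\infty)$ by $\| \mathcal{T} \| M^k$.

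Next I would establish that $F$ is absolutely continuous, hence in $W^{1,1}_{\mathrm{loc}}$. This follows immediately from the Lipschitz estimate \eqref{eq:13a} of Lemma \ref{lemma:continuousTensors1}: for $s \leq t$,
\begin{equation*}
| F(t) - F(s) | = | \mathcal{T}(f(t)) - \mathcal{T}(f(s)) | \leq k M^{k-1} \| \mathcal{T} \| \, \| f(t) - f(s) \|_{Y^k} \leq k M^{k-1} \| \mathcal{T} \| \int_s^t \| f'(r) \|_{Y^k} \dd r.
\end{equation*}
Since $\| f' \|_{Y^k} \in L^1(0,\infty)$, this shows $F$ is absolutely continuous on $[0,\infty)$ and that $|F'| \leq k M^{k-1} \| \mathcal{T} \| \, \| f' \|_{Y^k}$ a.e., so $F' \in L^1(0,\infty)$ and $F \in W^{1,1}(0,\infty)$ once we know $F'$ exists a.e.

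For the identification of the derivative, I would fix a point $t$ at which $f$ is differentiable in $Y^k$ (a.e.\@ $t$) and compute the difference quotient. Write $f(t+h) = f(t) + h f'(t) + r(h)$ with $\| r(h) \|_{Y^k} = o(h)$. Using multilinearity, $\mathcal{T}(f(t+h))$ expands; the zeroth-order term is $\mathcal{T}(f(t))$, the first-order term in $h$ is exactly $\sum_{i=1}^k \mathcal{T}(f_1(t),\dots,f_{i-1}(t),f_i'(t),f_{i+1}(t),\dots,f_k(t)) = D\mathcal{T}(f(t)) f'(t)$ by \eqref{eq:DifferentiabilityTensors}, and all remaining terms involve at least two factors among $\{ h f'(t), r(h) \}$, hence are $O(h^2) + o(h) = o(h)$ by \eqref{eq:OperatorNormTensor2}. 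Dividing by $h$ and letting $h \to 0$ yields $F'(t) = D\mathcal{T}(f(t)) f'(t)$ at every such $t$; combined with the absolute continuity this gives the claim. The only mildly delicate point is bookkeeping the cross terms in the multilinear expansion to confirm they are genuinely $o(h)$ uniformly enough to pass to the limit — but this is routine given the bound \eqref{eq:OperatorNormTensor2} and $\| r(h) \|_{Y^k}/h \to 0$, $\| f' \|_{Y^k}$ fixed. Alternatively, and perhaps more cleanly, one can avoid the explicit expansion by invoking the Fréchet differentiability of $\mathcal{T}$ (Lemma \ref{lemma:continuousTensors2}) together with the classical chain rule for the composition of a Fréchet-differentiable map with an a.e.\@-differentiable absolutely continuous curve, the latter being justified by the Lipschitz continuity of $\mathcal{T}$ on the bounded set $B_{Y^k}(M)$; I expect the paper takes this shorter route.
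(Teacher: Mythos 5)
Your route is genuinely different from the paper's. The paper never differentiates pointwise: it first checks that $F$ and $D\mathcal{T}\circ f(\cdot)\,f'(\cdot)$ lie in $L^1(0,\infty)$ via the embedding $W^{1,1}(0,\infty;Y^k)\hookrightarrow L^\infty(0,\infty;Y^k)$, then approximates $f$ by a sequence $(f_n)\subset C^1(0,\infty;Y^k)$ converging in $W^{1,1}$, applies the classical chain rule to $\mathcal{T}\circ f_n$, and passes to the limit in the identity $\int_0^\infty \mathcal{T}\circ f_n\,\phi'=-\int_0^\infty D\mathcal{T}\circ f_n\,f_n'\,\phi$ against test functions, using the Lipschitz estimates of Lemmas \ref{lemma:continuousTensors1} and \ref{lemma:continuousTensors2}. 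You instead work with the absolutely continuous representative $f(t)=f(0)+\int_0^t f'(s)\dd s$ and differentiate the quotient directly at a.e.\@ $t$. This is legitimate: the a.e.\@ differentiability of $f$ with classical derivative $f'$ follows from the Lebesgue differentiation theorem for Bochner integrals (no Radon--Nikod\'ym subtlety arises, since you start from $f'\in L^1(0,\infty;Y^k)$ rather than from mere absolute continuity of $f$), the scalar function $F$ is absolutely continuous by your estimate, and the multilinear expansion of the difference quotient --- equivalently, the Fr\'echet chain rule furnished by Lemma \ref{lemma:continuousTensors2} --- identifies $F'$ at each point of differentiability of $f$. What each approach buys: yours avoids the density of $C^1$ in $W^{1,1}(0,\infty;Y^k)$, which the paper invokes without comment; the paper's avoids any appeal to a.e.\@ differentiability of vector-valued curves and to the scalar fundamental theorem of calculus for absolutely continuous functions.

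There is one genuine omission in your write-up: you never establish $F\in L^1(0,\infty)$. You show only that $F$ is bounded by $\|\mathcal{T}\|M^k$ and that $F'\in L^1(0,\infty)$, and on the unbounded interval $(0,\infty)$ this does not imply $F\in L^1$ (a nonzero constant is bounded with integrable derivative). The fix is one line and is precisely the paper's opening estimate: peel off a single factor to get $|F(t)|\le \|\mathcal{T}\|\,\|f\|_{L^\infty(0,\infty;Y^k)}^{k-1}\,\|f(t)\|_{Y^k}$ and integrate, using $\|f(\cdot)\|_{Y^k}\in L^1(0,\infty)$. With that line added, your argument is complete.
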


\begin{proof}
Using the continuous embedding of $W^{1,1}(0,\infty;Y^k)$ in
$L^{\infty}(0,\infty;Y^k)$, we first obtain that
\begin{align*}
& \int_0^\infty |F(t)| \dd t
\leq \| \mathcal{T} \| \; \| f \|_{L^1(0,\infty;Y^k)} \; \|
f\|_{L^\infty(0,\infty;Y^k)}^{k-1} < \infty \\
& \int_0^\infty D\mathcal{T} \circ f(t) f'(t) \dd t \leq
k \| \mathcal{T} \| \; \| f \|_{L^\infty(0,\infty;Y^k)}^{k-1} \; \| f'
\|_{L^1(0,\infty;Y^k)}
< \infty.
\end{align*}
Therefore, $F \in L^1(0,\infty)$ and $D\mathcal{T}\circ f(\cdot) f'(\cdot) \in
L^1(0,\infty)$. It remains to prove that $D \mathcal{T} \circ f(\cdot)
f'(\cdot)$ is the derivative of $F$ in the sense of distributions.

Let $(f_n)_{n \in \mathbb{N}}$ be a sequence in $C^1(0,\infty;Y^k)$, with limit
$f$ in $W^{1,1}(0,\infty;Y^k)$.  Let $\phi \in C_c^\infty(0,\infty)$ be a test
function. By the chain rule, we have
\begin{equation} \label{eq:ChainRule1}
\int_0^\infty \mathcal{T} \circ f_n (t) \phi'(t) \dd t
= -\int_0^\infty D \mathcal{T} \circ f_n (t) f_n'(t) \phi(t) \dd t.
\end{equation}
Using the continuous embedding  of $W^{1,1}(0,\infty;Y^k)$ in
$L^{\infty}(0,\infty;Y^k)$, we obtain that $(f_n)_{n \in \mathbb{N}}$ is
bounded
in $L^\infty(0,\infty;Y^k)$. Let $M>0$ be an upper bound of $\| f_n
\|_{L^\infty(0,\infty; Y^k)}$ and $\| f \|_{L^\infty(0,\infty;Y^k)}$.
By Lemma \ref{lemma:continuousTensors1},
\begin{equation*}
\Big| \int_0^\infty (\mathcal{T} \circ f_n - \mathcal{T} \circ f ) \phi'(t) \dd
t \, \Big|
\leq k M^{k-1} \; \| \mathcal{T} \| \; \| f_n - f \|_{L^1(0,\infty;Y^k)} \|
\phi' \|_{L^\infty(0,\infty)} \underset{n \to \infty}{\longrightarrow} 0.
\end{equation*}
By Lemma \ref{lemma:continuousTensors2},
\begin{align*}
& \Big| \int_0^\infty \Big( D \mathcal{T} \circ f_n(t) f_n'(t) - D\mathcal{T}
\circ f(t) f'(t) \Big) \phi(t) \dd t \, \Big| \\
\leq \ & \int_0^\infty \big| \big( D\mathcal{T} \circ f_n(t) - D \mathcal{T}
\circ f(t) \big) f_n'(t) \big| \; |\phi(t)| \dd t  + \int_0^\infty \big|
D\mathcal{T} \circ f(t) \big( f_n'(t)- f'(t) \big) \big| \; | \phi(t) | \dd t
\\
\leq \ & k(k-1)M^{k-2} \| \mathcal{T} \| \; \| f_n - f \|_{L^\infty(0,\infty;
Y^k)}^{k-1} \| f_n' \|_{L^1(0,\infty;Y^k)} \; \| \phi \|_{L^\infty(0,\infty)}
\\
& \qquad + k M^{k-1} \| \mathcal{T} \| \; \| f_n'-f' \|_{L^1(0,\infty; Y^k)}
\; \| \phi \|_{L^\infty(0,\infty)} \underset{n \to \infty}{\longrightarrow}
0.
\end{align*}
Passing to the limit in \eqref{eq:ChainRule1}, we obtain that
\begin{equation*}
\int_0^\infty \mathcal{T} \circ f (t) \phi'(t) \dd t
= -\int_0^\infty D \mathcal{T} \circ f (t) f'(t) \phi(t) \dd t,
\end{equation*}
which justifies that  $F$ is differentiable in the sense of distributions, with
$F'(\cdot)= D\mathcal{T} \circ f(\cdot) f'(\cdot)$. This concludes the proof.
\end{proof}

\section{Derivation of a generalized Lyapunov equation}
\label{sectionDerivationLyapunov}

The goal of this section is to prove that the derivatives of $\mathcal{V}$ at 0 of order three and more, provided that they exist, are solution to a linear equation, that we call
generalized Lyapunov equation.
The existence of a unique solution to this equation and its use for approximating $\mathcal{V}$
and designing feedback laws
will be discussed in the following sections.
Rather than postulating this equation, we derive it from the HJB equation
under the assumption that
$\mathcal{V}$ is $(k+1)$-times Fr\'echet differentiable in $Y$, with $k \geq 3$,
and under a continuity assumption for optimal controls.
We stress that the assumptions on $\mathcal{V}$, in particular the differentiability at 0, are only used to obtain the generalized Lyapunov equation. The results obtained in the following sections do not rely on this assumption.

\subsection{Derivation of the HJB equation}

We prove in this subsection that the value function $\mathcal{V}$ is a solution the Hamilton-Jacobi-Bellman equation (HJB), under the assumption that $\mathcal{V}$ is continuously differentiable and under a continuity assumption for optimal controls.

Following standard arguments, it can be verified that the dynamic programming
principle for the infinite horizon problem holds: for all $y_0 \in Y$, for all $\tau >0$,
\begin{equation}\label{eq2.3}
\mathcal{V}(y_0)=\underset{u \in
L^2(0,\tau)}{\inf}\int_{0}^{\tau} \ell (S(u,y_0;t),u(t)) \dd t +
\mathcal{V}(S(u,y_0;\tau)),
\end{equation}
where $\ell(y,u)=\frac{1}{2}\| y \|_Y^2+\alpha u^2$.
Moreover, for $\tau >0$, any control $u \in L^2(0,\infty)$ is a solution to problem \eqref{eqProblem} with initial condition $y_0$ if and only if $u_{|(0,\tau)}$ minimizes the r.h.s.\@ of \eqref{eq2.3} and $u_{|(\tau,\infty)}$ is a solution to problem \eqref{eqProblem} with initial condition $S(u,y_0;\tau)$.

\begin{proposition} \label{prop:HJB}
In addition to \eqref{ass:A1}-\eqref{ass:A3}, assume that there exists an open neighborhood $Y_0$ of the origin in $Y$ which is such that the two following statements hold:
\begin{enumerate}
\item For all $y_0 \in Y_0$, problem \eqref{eqProblem} possesses a solution $u$ which is right-continuous at time 0.
\item The value function is continuously differentiable on $Y_0$.
\end{enumerate}
Then, for all $y \in \mathcal{D}(A) \cap Y_0$, the following Hamilton-Jacobi-Bellman equation holds:
\begin{equation}\label{eq:HJB}
D\mV(y) Ay + \frac{1}{2} \|y \|_Y^2 - \frac{1}{2\alpha} \big( D\mV(y) (Ny+B) \big)^2=0.
\end{equation}
\end{proposition}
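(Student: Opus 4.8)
The plan is to derive \eqref{eq:HJB} from the dynamic programming principle \eqref{eq2.3} by linearizing at time $0$, in both directions. Fix $y\in\mathcal{D}(A)\cap Y_0$ and, for $w\in\R$, set
\[
J_w(y):=\ell(y,w)+D\mathcal{V}(y)\big(Ay+(Ny+B)w\big),\qquad \ell(y,w)=\tfrac12\|y\|_Y^2+\tfrac{\alpha}{2}w^2 .
\]
Since $D\mathcal{V}(y)$ is linear, $w\mapsto J_w(y)$ is a strictly convex quadratic with leading coefficient $\alpha/2>0$; it is minimized at $w^{\star}=-\tfrac1{\alpha}D\mathcal{V}(y)(Ny+B)$, and
\[
\min_{w\in\R}J_w(y)=D\mathcal{V}(y)Ay+\tfrac12\|y\|_Y^2-\tfrac1{2\alpha}\big(D\mathcal{V}(y)(Ny+B)\big)^2 ,
\]
which is exactly the left-hand side of \eqref{eq:HJB}. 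Hence it suffices to prove $\min_wJ_w(y)\ge0$ and $\min_wJ_w(y)\le0$.

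For the inequality $\min_wJ_w(y)\ge0$ I would insert a \emph{constant} control $u\equiv w$ into \eqref{eq2.3}, which gives $\mathcal{V}(y)\le\int_0^\tau\ell(S(w,y;t),w)\dd t+\mathcal{V}(S(w,y;\tau))$; subtracting $\mathcal{V}(y)$, dividing by $\tau>0$ and letting $\tau\downarrow0$ yields $0\le J_w(y)$ for every $w$. For the reverse inequality I would use the optimal control $\bar u$ provided by the first hypothesis: then \eqref{eq2.3} becomes the \emph{equality} $\mathcal{V}(y)=\int_0^\tau\ell(S(\bar u,y;t),\bar u(t))\dd t+\mathcal{V}(S(\bar u,y;\tau))$, and the same manipulation and limit give $0=J_{\bar u(0^+)}(y)\ge\min_wJ_w(y)$, where $u(0^+):=\lim_{t\downarrow0}u(t)$. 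Both limits reduce to two facts about a trajectory $t\mapsto S(u,y;t)$ with $y\in\mathcal{D}(A)$ and a control $u\in L^2(0,\infty)$ that is \emph{bounded near $0$} (true for a constant control, and, by the right-continuity demanded in the first hypothesis, for the optimal control): (a) $\tfrac1\tau\int_0^\tau\ell(S(u,y;t),u(t))\dd t\to\ell(y,u(0^+))$ as $\tau\downarrow0$; and (b) $t\mapsto S(u,y;t)$ is right-differentiable at $0$ in $Y$ with right-derivative $Ay+(Ny+B)u(0^+)$. Fact (a) is immediate from $S(u,y;\cdot)\in C([0,T];Y)$ (the embedding $W(0,T)\hookrightarrow C([0,T];Y)$ recalled above) together with $u(t)\to u(0^+)$. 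Given (b), since $\mathcal{V}$ is Fr\'echet differentiable at $y$ and $\|S(u,y;t)-y\|_Y=O(t)$, one obtains $\mathcal{V}(S(u,y;t))=\mathcal{V}(y)+tD\mathcal{V}(y)\big(Ay+(Ny+B)u(0^+)\big)+o(t)$, which is all the two limits above require.

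The analytic core is fact (b), which I would establish from the mild formulation
\[
\frac{S(u,y;t)-y}{t}=\frac{e^{At}y-y}{t}+\frac1t\int_0^t e^{A(t-s)}\big(NS(u,y;s)+B\big)u(s)\dd s .
\]
The first term tends to $Ay$ because $y\in\mathcal{D}(A)$. For the Duhamel term I would first invoke Lemma~\ref{le2.2} (with $y_0=y$, which lies in $V$ since $\mathcal{D}(A)\subset V$) to get $S(u,y;\cdot)\in H^1(0,T;Y)\cap L^2(0,T;\mathcal{D}(-A_0))$, hence $S(u,y;\cdot)\in C([0,T];V)$ by the standard interpolation/trace embedding together with \eqref{ass:A3}. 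Therefore $NS(u,y;s)\to Ny$ in $Y$ as $s\downarrow0$, and since $u$ is bounded near $0$ with $u(s)\to u(0^+)$, the integrand $\big(NS(u,y;s)+B\big)u(s)$ converges to $(Ny+B)u(0^+)$ in $Y$; substituting $s=t\sigma$ and using the strong continuity and local boundedness of $r\mapsto e^{Ar}$ on $Y$, the dominated convergence theorem gives $\tfrac1t\int_0^te^{A(t-s)}\big(NS(u,y;s)+B\big)u(s)\dd s\to(Ny+B)u(0^+)$, which proves (b).

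The main obstacle is precisely this right-differentiability of the state at $t=0$ in the topology of $Y$. The difficulty is that the perturbation $w\mapsto wN$ is unbounded on $Y$, so $y\in\mathcal{D}(A)$ by itself does not deliver it, and that the merely $Y$-valued continuity of $S(u,y;\cdot)$ does not suffice to pass $N$ to the limit in the Duhamel term. The remedy is the extra regularity granted by Lemma~\ref{le2.2} when $y_0\in V$, which promotes $S(u,y;\cdot)$ to $C([0,T];V)$, combined with the local regularity of the control at $0$ built into the first hypothesis. The remaining ingredients---the explicit quadratic minimization, the chain rule for a $C^1$ function composed with a right-differentiable curve, and the averaging limit (a)---are routine.
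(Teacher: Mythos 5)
Your proposal is correct and follows essentially the same route as the paper: both derive the two inequalities from the dynamic programming principle (constant controls for one direction, the right-continuous optimal control for the other), both identify the explicit quadratic minimizer $w^\star=-\tfrac1\alpha D\mathcal{V}(y)(Ny+B)$, and both rest the analytic core on the right-differentiability of the trajectory at $t=0$ obtained from the Duhamel formula together with the $C([0,T];V)$ regularity supplied by Lemma~\ref{le2.2}. The only differences are presentational (dominated convergence after rescaling versus the paper's explicit splitting of the Duhamel term, and pointwise Fr\'echet differentiability of $\mathcal{V}$ versus the paper's use of the integral mean value form).
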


\begin{proof} The proof uses standard arguments.
Let $y_0 \in \mD(A) \cap Y_0$ be arbitrary. By assumption, there exists an optimal solution $\bar{u}$ to \eqref{eqProblem} with initial condition $y_0$ which is right-continuous at time 0. Let $u_0$ denote the limit of $\bar{u}$ at time 0. Let $\bar{y}= S(\bar{u},y_0)$ be the associated state.
Our  proof is based on the following relations:
\begin{align}
& D\mathcal{V}(y_0) \big( Ay_0 + (Ny_0 + B) u_0 \big) + \ell(y_0, u_0)= 0, \label{eqRelHJB1} \\
& u_0 \in \text{arg min}_{u \in \R} D\mathcal{V}(y_0) \big( Ay_0 + (Ny_0 + B) u \big) + \ell(y_0, u). \label{eqRelHJB2}
\end{align}

\emph{Step 1:} proof of \eqref{eqRelHJB1}.
By the dynamic programming principle, for all $\tau>0$,
\begin{equation*}
\mathcal{V}(y_0) = \int_{0}^{\tau} \ell(\bar{y}(s),\bar{u}(s)) \dd s +
\mathcal{V}(\bar{y}(\tau)).
\end{equation*}
Thus,
\begin{equation}\label{eq2.5}
\frac{1}{\tau} \int_{0}^{\tau} \ell(\bar{y}(s),\bar{u}(s)) \dd s +
\frac{1}{\tau} \big( \mathcal{V}(\bar{y}(\tau)-\mathcal{V}(y_0) \big)=0.
\end{equation}
For any $T>0$, we have $\bar{y} \in C([0,T];Y)$ and therefore, we can fix $\tau_0 >0$ such that $\bar{y}(\tau) \in Y_0$, for all $\tau \in [0,\tau_0]$.
Relation \eqref{eqRelHJB1} follows then by passing to the limit in \eqref{eq2.5}, in $Y$, when $\tau \to 0$.
By continuity of $\bar{y}$ and $\bar{u}$ at time 0, the first term of the left-hand side of \eqref{eq2.5} clearly converges to $\ell(y_0,u_0)$. To prove the convergence of the second term, we need to prove the differentiability of $\bar{y}$ at time 0 and to establish a chain rule property.
For all $\tau \in (0,\tau_0)$, we have
\begin{equation}\label{eq2.6a}
\frac{1}{\tau} \big( \bar{y}(\tau)-y_0 \big)
= \frac{1}{\tau} \big( e^{A\tau} y_0 - y_0 \big)
+ \frac{1}{\tau} \int_{0}^{\tau} e^{A(\tau-s)} \underbrace{\big( (N\bar{y}(s)+B)\bar{u}(s) \big) }_{=: f(s)} \dd s.
\end{equation}
The first term of the r.h.s.\@ converges to $Ay_0$. Regarding the second one, observe 
first that by Lemma \ref{le2.2}, $\bar{y} \in C([0,\tau_0];V)$, therefore, since 
$\bar{u}$ is right-continuous and $N \in \mathcal{L}(V,Y)$, the function $f\colon s \geq 0 
\mapsto f(s) \in Y$ is right-continuous at time 0. We have
\begin{equation} \label{eq2.6b}
\frac{1}{\tau} \int_0^\tau e^{A(\tau-s)} f(s) \dd s - f(0)
= \underbrace{\frac{1}{\tau} \int_0^\tau e^{A(\tau-s)} \big( f(s)-f(0) \big) \dd s}_{=:(a)}
+ \underbrace{\frac{1}{\tau} \int_0^\tau \big( e^{A(\tau-s)} f(0) \big) - f(0) \dd s}_{=:(b)}.
\end{equation}
Since $A$ generates an analytic semigroup (see Remark \ref{KK1}), there exist 
$M>0$ and $\omega>0$ such that $\| e^{As} \|_{\mathcal{L}(Y)} \leq Me^{\omega 
s}$. We obtain
\begin{equation} \label{eq2.6c}
\| (a) \|_Y \ \leq \ \frac{1}{\tau} \int_0^\tau  \| e^{A(\tau-s)} 
\|_{\mathcal{L}(Y)}  \| f(s)-f(0) \|_Y \dd s 
\ \leq \  M e^{\omega\tau_0} \Big( \sup_{s \in [0,\tau]} \| f(s)-f(0) \|_Y 
\Big)
\underset{\tau \to 0}{\longrightarrow} 0.
\end{equation}
Moreover, for $\tilde{f}(t):= e^{At}f(0)$, it holds that $\tilde{f}\in 
C([0,\tau],Y)$, therefore
\begin{equation} \label{eq2.6d}
\| (b) \|_Y \leq \frac{1}{\tau} \int_0^\tau \| \tilde{f}(\tau-s)- \tilde{f}(0) 
\|_Y \dd s
\leq \max_{s \in [0,\tau]} \| \tilde{f}(s)-\tilde{f}(0) \|_Y
\underset{\tau \to 0}{\longrightarrow} 0.
\end{equation}
Combining \eqref{eq2.6a}-\eqref{eq2.6d}, we obtain that
\begin{equation*}
\frac{1}{\tau} \big( \bar{y}(\tau)- y(0) \big) \underset{\tau \to 0}{\longrightarrow} Ay_0 + (Ny_0 + B) u_0. 
\end{equation*}
We now have
\begin{align*}
& \frac{1}{\tau} \big( \mathcal{V}(\bar{y}(\tau)) - \mathcal{V}(y_0) \big)
- D\mathcal{V}(y_0) (Ay_0 + (Ny_0+B)u_0)
= \\ 
& \qquad \underbrace{\frac{1}{\tau} \int_0^1 \big[ D\mathcal{V}(y_0 + s(\bar{y}(\tau)-y_0)) - D\mathcal{V}(y_0) \big](\bar{y}(\tau)-y_0) \dd s}_{=:(c)} \\
& \qquad \qquad + D\mathcal{V}(y_0) \Big( \frac{\bar{y}(\tau)-y_0}{\tau} - (Ay_0 + (Ny_0+B)u_0) \Big).
\end{align*}
Clearly, the second term of the r.h.s.\@ converges to 0. Using the continuity of $D\mathcal{V}$, and the fact that $(\bar{y}(\tau)-y_0)/\tau$ is bounded, we obtain
\begin{equation*}
\| (c) \|_Y
\leq \Big( \max_{z \in B_Y(\| \bar{y}(\tau)-y_0 \|_Y)} \| D\mathcal{V}(y_0 + z )- D\mathcal{V}(y_0) \|
\Big)
\Big\| \frac{1}{\tau} \big( \bar{y}(\tau)-y_0 \big) \Big\|_Y 
\underset{\tau \to 0}{\longrightarrow} 0.
\end{equation*}
Passing to the limit in \eqref{eq2.5}, we obtain: $\ell(y_0,u_0) + D\mathcal{V}(y_0)(Ay_0 + (Ny_0+B)u_0)= 0$, which proves \eqref{eqRelHJB1}.

\emph{Step 2:} proof of \eqref{eqRelHJB2} and conclusion.
Let $u \in \R$ and let $\tilde{u}$ be the piecewise constant control equal to $u$ on $(0,1)$ and equal to 0 on $(1,\infty)$. Let $\tilde{y}= S(y_0,\tilde{u})$.
Then, by \eqref{eq2.3}, for all $\tau \in (0,1)$,
\begin{equation*}
\frac{1}{\tau} \int_{0}^{\tau} \ell(\tilde{y}(s),u) \dd s +
\frac{1}{\tau} \big( \mathcal{V}(\tilde{y}(\tau)-\mathcal{V}(y_0) \big) \geq 0.
\end{equation*}
We can pass to the limit (when $\tau \to 0$) with exactly the same arguments as the ones used in the first part of the proof. We therefore obtain
\begin{equation*}
D\mathcal{V}(y_0) \big( Ay_0 + (Ny_0 + B) u \big) + \ell(y_0, u) \geq 0.
\end{equation*}
Since the l.h.s.\@ in the above expression is equal to 0 for $u= u_0$, we deduce that it reaches its minimum 0 at $u= u_0$. The l.h.s.\@ being linear-quadratic with respect to $u$, the following relation can easily be obtained:
\begin{equation} \label{eqFeedbackGeneral}
u_0= - \frac{1}{\alpha} D\mathcal{V}(y_0)(Ny_0 + B).
\end{equation}
Equation \eqref{eq:HJB} follows then from \eqref{eqRelHJB1} and \eqref{eqFeedbackGeneral}.
\end{proof}

\subsection{A generalized operator Lyapunov equation}
\label{subsecGenLyapunov}

We prove in Theorem \ref{thm:DifferentiabilityImpliesLyapunov} that if $\mathcal{V}$ is $(k+1)$-times differentiable, then $D^k \mathcal{V}(0)$ is a solution to a generalized Lyapunov equation, by differentiating the HJB equation $k$-times.
Note that in this subsection, the $k$-th derivative $D^k \mathcal{V}(0)$ is
represented by a multilinear form in $\mM(Y^k,\R).$

\paragraph{The case $k=3$.}

We assume that $\mathcal{V}$ is four times Fr\'echet differentiable on $Y$ and that the assumptions of Proposition \ref{prop:HJB} hold. Note that the differentiability on $Y$ implies the differentiability on $\mathcal{D}(A)$. Differentiating the HJB equation \eqref{eq:HJB} a first time with respect to
$y$ in the direction $z_1 \in \mathcal{D}(A)$ yields
\begin{align*}
& D^2 \mathcal{V}(y)(Ay,z_1) + D\mathcal{V}(y)A z_1 + \langle y, z_1 \rangle_Y \\
& \quad - \frac{1}{\alpha} \big( D^2 \mathcal{V}(y)(Ny+B,z_1) +
D\mathcal{V}(y)Nz_1 \big) \big( D\mathcal{V}(y)(Ny+B) \big) = 0.
\end{align*}
Differentiating a second time with  respect to $y$ in the direction $z_2 \in
\mathcal{D}(A)$, we obtain
\begin{align}
& D^3 \mathcal{V}(y)(Ay,z_1,z_2) + D^2 \mathcal{V}(y)(Az_2,z_1) + D^2
\mathcal{V}(y)(Az_1,z_2) + \langle z_1, z_2 \rangle_Y \notag \\
& \quad - \frac{1}{\alpha} \Big( D^2 \mathcal{V}(y)(Ny+B, z_1) +
D\mathcal{V}(y)Nz_1 \Big)
\Big( D^2 \mathcal{V}(y)(Ny + B,z_2) + D\mathcal{V}(y)Nz_2 \Big) \notag \\
& \quad - \frac{1}{\alpha} \Big( D^3 \mathcal{V}(y)(Ny+B,z_1,z_2) + D^2
\mathcal{V}(y)(Nz_2,z_1) + D^2 \mathcal{V}(y)(Nz_1,z_2) \Big)
\Big( D\mathcal{V}(y)(Ny+B) \Big) \notag \\
& \quad = 0. \label{eq:HJBTwiceDiff}
\end{align}
Observing that  $\mathcal{V}(y) \geq 0$ for all $y$ and  that $\mathcal{V}(0)=
0$, we deduce that $D\mathcal{V}(0)= 0$. Taking $y= 0$ in the above equation
and representing $D^2 \mathcal{V}(0)$ as nonnegative self-adjoint operator
$\Pi=\Pi^* \in
\mathcal{L}(Y)$ such that $D^2\mathcal{V}(0)(z_1,z_2)= \langle z_1, \Pi z_2
\rangle_Y$ for all $z_1,z_2 \in \mD(A)$, we obtain
\begin{equation} \label{eq:algebraicRiccati}
\langle A^* \Pi z_1,z_2 \rangle + \langle \Pi A z_1,z_2 \rangle + \langle
z_1,z_2\rangle - \frac{1}{\alpha}  (B^* \Pi z_1)(B^* \Pi z_2)= 0.
\end{equation}
Equation \eqref{eq:algebraicRiccati} is the algebraic operator Riccati
equation, see e.g.\@ \cite{CurZ95,LasT00}. Throughout the rest of the paper, we
assume, on top of assumptions \eqref{ass:A1}-\eqref{ass:A3} that
\begin{equation}\label{ass:A4} \tag{A4}
  \exists F \in \mL(Y,\R) \text{ such that the semigroup $e^{(A+BF)t}$
is exponentially stable on } Y.
\end{equation}
Since the pair $(A,I)$ is exponentially detectable on $Y$, it follows from \cite[Theorem
6.2.7]{CurZ95} that \eqref{eq:algebraicRiccati} has a unique
nonnegative stabilizing solution $\Pi \in \mL(Y)$. Accordingly, we define the
operator $A_{\Pi}$  as
follows:
\begin{equation*}
\begin{aligned}
 A_{\Pi} \colon & \mD(A_\Pi) \subset Y \to Y, \quad \mD(A_\Pi) = \big\{y \in
L^2(\Omega)\ | \ Ay - \frac{1}{\alpha}BB^*\Pi\in Y
\big\}, \\
 & y \mapsto A_{\Pi}y:= Ay - \frac{1}{\alpha} BB^*\Pi y .
\end{aligned}
\end{equation*}
In particular, since $\Pi$ is stabilizing, we know that the semigroup
$e^{A_\Pi t}$ is exponentially stable on $Y.$ Moreover, since $BB^*\Pi \in
\mL(Y),$ by a perturbation result for analytic semigroups \cite{Paz83}, as in
Remark \ref{KK1} we can choose $\tilde{\lambda}\ge 0$ such that $-\tilde{A}_0
=-A_\Pi+\tilde{\lambda}I$ is maximal accretive. Endowing $\mD(-\tilde{A}_0)$ and
$\mD(-A_0)$ with their graph norms, we have that the identity operator between  these spaces is a homeomorphism
 $\mD(-\tilde{A}_0)\cong \mD(-A_0).$ Consequently, the
interpolation spaces defined by the method of traces \cite[Part II, Chapter
1, Section 2]{Benetal07} are homeomorphic and we thus obtain
\begin{align*}
  [\mD(-\tilde{A}_0),Y)]_{\frac{1}{2}} = [\mD(-\tilde{A}_0^*),Y]_{\frac{1}{2}}
= V.
\end{align*}
We continue by differentiating a third time with respect to $y$ in the
direction $z_3 \in \mathcal{D}(A),$  which for $y=0$ leads us to:
\begin{align*}
& D^3 \mathcal{V}(0)(Az_3,z_1,z_2) + D^3\mathcal{V}(0)(Az_2,z_1,z_3)+ D^3
\mathcal{V}(0)(Az_1,z_2,z_3) \\
& \quad - \frac{1}{\alpha} \Big( D^3V(0)(B,z_1,z_3) + D^2
\mathcal{V}(0)(Nz_3,z_1) +  D^2 \mathcal{V}(0)(Nz_1,z_3) \Big)
\Big( D^2 \mathcal{V}(0)(B,z_2) \Big) \\
& \quad - \frac{1}{\alpha} \Big( D^3\mathcal{V}(0)(B,z_2,z_3) + D^2
\mathcal{V}(0)(Nz_3,z_2) + D^2 \mathcal{V}(0)(Nz_2,z_3) \Big)
\Big( D^2 \mathcal{V}(0)(B,z_1) \Big) \\
& \quad - \frac{1}{\alpha} \Big( D^3 \mathcal{V}(0)(B,z_1,z_2) + D^2
\mathcal{V}(0)(Nz_2,z_1) + D^2 \mathcal{V}(0)(Nz_1,z_2) \Big)
\Big( D^2\mathcal{V}(0)(B,z_3) \Big) \\
& \quad = 0.
\end{align*}
We can already observe that this equation is a linear equation with respect to
$D^3 \mathcal{V}(0)$. Moreover, using the symmetry of the derivatives, we can
re-write it in the following form:
\begin{equation} \label{eq:lyapOrder3}
D^3 \mathcal{V}(0)(A_\Pi z_1,z_2,z_3) + D^3\mathcal{V}(0)(z_1,A_\Pi z_2,z_3) +
D^3 \mathcal{V}(0)(z_1,z_2,A_\Pi z_3) = \frac{1}{2\alpha}
\mathcal{R}_3(z_1,z_2,z_3),
\end{equation}
where the multilinear form $\mathcal{R}_3\colon Y^3 \rightarrow \R$ is defined
by
\begin{align*}
\mathcal{R}_3(z_1,z_2,z_3)= \ &
2 (\Pi B, z_1) \big[ (\Pi z_2, Nz_3) + (\Pi z_3, N z_2) \big]
+ 2 (\Pi B, z_2) \big[ (\Pi z_1, Nz_3) + (\Pi z_3, N z_1) \big] \\
& \qquad + 2 (\Pi B, z_3) \big[ (\Pi z_1, Nz_2) + (\Pi z_2, N z_1) \big].
\end{align*}

\paragraph{Lyapunov equation: general case.}

The derivation of the Lyapunov equation, for a general $k \geq 3$, requires some
symmetrization techniques for multilinear forms. For $i$ and $j \in \mathbb{N}$, we make
use of the following set of permutations:
\begin{equation*}
S_{i,j}= \big\{ \sigma \in S_{i+j} \,|\, \sigma(1)< ... < \sigma(i) \text{ and }
\sigma(i+1) < ... < \sigma(i+j) \big \},
\end{equation*}
where $S_{i+j}$ is the set of permutations of $\{ 1,...,i+j \}$.
A permutation $\sigma \in S_{i,j}$ is uniquely defined by the subset
$\{\sigma(1),...,\sigma(i) \}$, therefore, the cardinality of $S_{i,j}$ is equal
to the number of subsets of cardinality $i$ of $\{ 1,...,i+j\}$, that is to say
\begin{equation*}
|S_{i,j}|= \begin{pmatrix} i+j \\ i \end{pmatrix}.
\end{equation*}
Let us give an example. Representing a permutation $\sigma \in S_4$ by the
vector $(\sigma(1),...,\sigma(4))$, we have:
\begin{align*}
S_{2,2}= \ & \big\{ \sigma \in S_4 \,|\, \sigma(1) < \sigma(2) \text{ and }
\sigma(3) < \sigma(4) \big\} \\
= \ & \big\{ (1,2,3,4),  (1,3,2,4), (1,4,2,3), (2,3,1,4), (2,4,1,3), (3,4,1,2)
\big\}.
\end{align*}
Let $\mathcal{T}$ be a multilinear form of order $i+j$. We denote by
$\text{Sym}_{i,j}(\mathcal{T})$ the multilinear form defined by
\begin{equation} \label{eq:defSym}
\text{Sym}_{i,j}(\mathcal{T})(z_1,...,z_{i+j}) = \begin{pmatrix} i+j \\ i
\end{pmatrix}^{-1} \Big[ \sum_{\sigma \in S_{i,j}} \mathcal{T}
(z_{\sigma(1)},...,z_{\sigma(i+j)} ) \Big].
\end{equation}

The two following lemmas contain the main properties related to this specific symmetrization technique which will be needed. Their proofs are given in the Appendix. Lemma \ref{lemma:leibnitz} is a general Leibnitz formula for the differentiation of the product of two functions. Lemma \ref{lemma:symmetry} is a symmetry property.

\begin{lemma} \label{lemma:leibnitz}
Let $f\colon Y \rightarrow \R$ and $g \colon Y \rightarrow \R$ be two $k$-times continuously differentiable functions. Then, for all $k \geq 1$, for all $y \in Y$,
\begin{equation} \label{eq:leibnitz0}
D^k \big[f(y) g(y) \big]
= \sum_{i=0}^k \begin{pmatrix} k \\ i \end{pmatrix} \text{Sym}_{i,k-i} \big( D^i f(y) \otimes D^{k-i} g(y) \big).
\end{equation}
\end{lemma}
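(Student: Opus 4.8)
The plan is to prove the Leibniz-type formula \eqref{eq:leibnitz0} by induction on $k$. For $k=1$, the ordinary product rule gives $D[f(y)g(y)] = Df(y)g(y) + f(y)Dg(y)$, and since $\text{Sym}_{0,1}$ and $\text{Sym}_{1,0}$ act on multilinear forms of order $1$ (where no nontrivial permutations occur), the right-hand side of \eqref{eq:leibnitz0} is exactly $\text{Sym}_{1,0}(D^1f(y)\otimes D^0g(y)) + \text{Sym}_{0,1}(D^0f(y)\otimes D^1g(y)) = Df(y)g(y) + f(y)Dg(y)$, which matches. This settles the base case.

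For the induction step, I would assume \eqref{eq:leibnitz0} holds at level $k$ and differentiate once more. Applying $D$ to $\sum_{i=0}^k \binom{k}{i}\text{Sym}_{i,k-i}(D^if(y)\otimes D^{k-i}g(y))$, I use the fact (from the differentiability properties of multilinear forms, Lemma \ref{lemma:continuousTensors2}) that differentiating $D^if(y)\otimes D^{k-i}g(y)$ with respect to $y$ produces $D^{i+1}f(y)\otimes D^{k-i}g(y) + D^if(y)\otimes D^{k-i+1}g(y)$, once one checks that the symmetrization operator $\text{Sym}_{i,k-i}$ commutes appropriately with the differentiation in the sense that $D[\text{Sym}_{i,k-i}(\mathcal{S})] $ on the diagonal can be re-expressed in terms of $\text{Sym}$ of the derivative. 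This yields a sum indexed by $i$ of symmetrizations of $D^{i+1}f\otimes D^{k-i}g$ and $D^if\otimes D^{k-i+1}g$. Re-indexing and collecting the two families of terms contributing to $D^{j}f\otimes D^{k+1-j}g$ for each fixed $j$, the combinatorial identity to verify is that the coefficients combine, via Pascal's rule $\binom{k}{j-1}+\binom{k}{j} = \binom{k+1}{j}$, into $\binom{k+1}{j}\text{Sym}_{j,k+1-j}(D^jf(y)\otimes D^{k+1-j}g(y))$.

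The main obstacle, and the step requiring genuine care rather than routine bookkeeping, is the compatibility between the symmetrization operator $\text{Sym}_{i,j}$ and the differentiation operator $D$ acting on the diagonal $y^{\otimes(i+j)}$. Concretely, when one differentiates $\text{Sym}_{i,k-i}(D^if(y)\otimes D^{k-i}g(y))$ evaluated at $(y,\dots,y)$, one obtains (by Lemma \ref{lemma:continuousTensors2}) a sum over the slot in which the perturbation direction is inserted; one must show that, after regrouping, this equals a combination of $\text{Sym}_{i+1,k-i}$ and $\text{Sym}_{i,k-i+1}$ applied to the appropriate tensor products. The key point is that $S_{i,k-i}$, viewed as a set of coset representatives for $S_{i+k-i}/(S_i\times S_{k-i})$, refines correctly: each $\sigma \in S_{i+1,k-i}$ or $\sigma \in S_{i,k-i+1}$ arises from exactly $\binom{k}{i}$-many, resp.\ the correct multiplicity of, the terms produced by differentiating, and the normalizing binomial coefficients in the definition \eqref{eq:defSym} of $\text{Sym}$ are precisely calibrated so that the multiplicities cancel against the $\binom{k}{i}$ prefactors.

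Once this commutation property is established as a standalone combinatorial lemma (which I would likely isolate, or at least state cleanly before the induction), the remainder of the proof is a mechanical application of Pascal's rule and re-indexing of the summation, and the induction closes. I expect the total argument to be short modulo that one combinatorial verification, which is why the Lemma's proof is deferred to the Appendix in the paper; the symmetrization set-up with $S_{i,j}$ and the example $S_{2,2}$ given just before the statement are exactly the scaffolding needed to make that verification transparent.
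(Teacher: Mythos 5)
Your proposal is correct and follows essentially the same route as the paper's Appendix proof: induction on $k$, differentiating the level-$k$ identity, re-indexing, and combining the two resulting families via the observation that every $\sigma \in S_{i,k+1-i}$ has either $\sigma(i)=k+1$ or $\sigma(k+1)=k+1$, so that $S_{i,k+1-i}$ decomposes into copies of $S_{i-1,k+1-i}$ and $S_{i,k-i}$ — which is exactly the combinatorial compatibility (Pascal's rule at the level of the permutation sets) you flag as the crux. The only difference is presentational: the paper works with the unnormalized sums over $S_{i,k-i}$ throughout (so the binomial prefactors cancel against the normalization in the definition of $\text{Sym}$) and only converts back to $\text{Sym}$ notation at the end, rather than tracking a separate commutation lemma.
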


\begin{lemma} \label{lemma:symmetry}
Let $\mathcal{T}_1 \in \mathcal{M}(Y^i,\R)$ and $\mathcal{T}_2 \in \mathcal{M}(Y^j,\R)$. Then, for all $y \in Y$,
\begin{equation*}
\text{Sym}_{i,j}(\mathcal{T}_1 \otimes \mathcal{T}_2 )(y^{\otimes (i+j)})=
\mathcal{T}_1(y^{\otimes i}) \mathcal{T}_2(y^{\otimes j}).
\end{equation*}
Moreover, if $\mathcal{T}_1$ and $\mathcal{T}_2$ are symmetric, then $\text{Sym}_{i,j}(\mathcal{T}_1 \otimes \mathcal{T}_2)$ is also symmetric.
\end{lemma}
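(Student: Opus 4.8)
Lemma \ref{lemma:symmetry} asks for two things: first, that $\text{Sym}_{i,j}(\mathcal{T}_1 \otimes \mathcal{T}_2)$, evaluated on the diagonal $y^{\otimes(i+j)}$, collapses to the plain product $\mathcal{T}_1(y^{\otimes i})\mathcal{T}_2(y^{\otimes j})$; and second, that symmetry of $\mathcal{T}_1$ and $\mathcal{T}_2$ is inherited by the symmetrization. Let me sketch how I'd handle each.

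For the diagonal identity, the key observation is that when every argument equals the same $y$, the value $(\mathcal{T}_1\otimes\mathcal{T}_2)(z_{\sigma(1)},\dots,z_{\sigma(i+j)})$ does not depend on $\sigma$ at all: by definition of the tensor product, it equals $\mathcal{T}_1(y^{\otimes i})\mathcal{T}_2(y^{\otimes j})$ for every choice of $\sigma\in S_{i,j}$, since each slot is fed $y$. Hence the sum over $S_{i,j}$ in \eqref{eq:defSym} is just $|S_{i,j}|=\binom{i+j}{i}$ copies of that single value, and the prefactor $\binom{i+j}{i}^{-1}$ cancels it exactly. This is essentially a one-line computation and presents no obstacle.

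For the symmetry claim, I would argue as follows. Let $\tau$ be an arbitrary permutation of $\{1,\dots,i+j\}$; we must show $\text{Sym}_{i,j}(\mathcal{T}_1\otimes\mathcal{T}_2)(z_{\tau(1)},\dots,z_{\tau(i+j)})$ equals the unsymmetrized expression. Relabelling the arguments, this amounts to showing that the map $\sigma\mapsto \sigma\circ\tau$ (suitably interpreted) permutes the index patterns $\{\sigma(1),\dots,\sigma(i)\}$ that parametrize $S_{i,j}$, up to reordering within the two blocks — and crucially, reordering within the first $i$ entries leaves $\mathcal{T}_1$ invariant and reordering within the last $j$ entries leaves $\mathcal{T}_2$ invariant, because $\mathcal{T}_1$ and $\mathcal{T}_2$ are symmetric. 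More precisely, each permutation $\rho\in S_{i+j}$ factors uniquely as $\rho=\sigma\cdot(\pi_1\times\pi_2)$ with $\sigma\in S_{i,j}$, $\pi_1\in S_i$ acting on the first block and $\pi_2\in S_j$ on the second; the composition with a fixed $\tau$ induces a bijection of $S_{i,j}$ after absorbing the block-internal parts into the symmetry of $\mathcal{T}_1,\mathcal{T}_2$. Therefore the sum $\sum_{\sigma\in S_{i,j}}(\mathcal{T}_1\otimes\mathcal{T}_2)(z_{\sigma(\tau(1))},\dots)$ is a reindexing of $\sum_{\sigma\in S_{i,j}}(\mathcal{T}_1\otimes\mathcal{T}_2)(z_{\sigma(1)},\dots)$, which gives the desired invariance.

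The main obstacle is purely bookkeeping: making the "factor $\rho$ through $S_{i,j}$ modulo block permutations, then compose with $\tau$" argument precise without drowning in notation. The cleanest route is probably to note that $S_{i,j}$ is a set of coset representatives for $S_i\times S_j$ in $S_{i+j}$, observe that the function $\rho\mapsto(\mathcal{T}_1\otimes\mathcal{T}_2)(z_{\rho(1)},\dots,z_{\rho(i+j)})$ is constant on each such coset by symmetry of $\mathcal{T}_1$ and $\mathcal{T}_2$, so that $\binom{i+j}{i}^{-1}\sum_{\sigma\in S_{i,j}}(\mathcal{T}_1\otimes\mathcal{T}_2)(z_{\sigma(\bullet)})$ equals the full average $\frac{1}{(i+j)!}\sum_{\rho\in S_{i+j}}(\mathcal{T}_1\otimes\mathcal{T}_2)(z_{\rho(\bullet)})$; the latter is manifestly invariant under precomposition with any $\tau$, hence symmetric, and restricting back to $S_{i,j}$ gives the statement. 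I'd write it in that averaging form to keep the index manipulations to a minimum.
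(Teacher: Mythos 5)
Your proof is correct, and the symmetry part takes a genuinely different route from the paper's. For the diagonal identity both arguments are the same one-line observation that every summand in \eqref{eq:defSym} equals $\mathcal{T}_1(y^{\otimes i})\mathcal{T}_2(y^{\otimes j})$ and the prefactor cancels the count $|S_{i,j}|=\binom{i+j}{i}$. For the symmetry claim, the paper instead sets $f(y)=\mathcal{T}_1(y^{\otimes i})$, $g(y)=\mathcal{T}_2(y^{\otimes j})$, applies the Leibnitz formula of Lemma \ref{lemma:leibnitz} to $fg$ at $y=0$, observes that only the $\ell=i$ term survives and that $D^if(0)=i!\,\mathcal{T}_1$, $D^jg(0)=j!\,\mathcal{T}_2$ by symmetry, so that $\text{Sym}_{i,j}(\mathcal{T}_1\otimes\mathcal{T}_2)=\frac{1}{(i+j)!}D^{i+j}[fg](0)$, which is symmetric because higher Fr\'echet derivatives of smooth functions are. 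Your coset argument --- $S_{i,j}$ is a transversal of $S_i\times S_j$ in $S_{i+j}$, the map $\rho\mapsto(\mathcal{T}_1\otimes\mathcal{T}_2)(z_{\rho(1)},\dots,z_{\rho(i+j)})$ is constant on each coset $\sigma(S_i\times S_j)$ by symmetry of the factors, hence the $S_{i,j}$-average equals the full $S_{i+j}$-average, which is manifestly invariant under relabelling --- is sound: the unique factorization $\rho=\sigma\circ\pi$ with $\sigma\in S_{i,j}$ and $\pi\in S_i\times S_j$ holds because $\sigma$ is determined by the set $\{\rho(1),\dots,\rho(i)\}$, and the count $(i+j)!=\binom{i+j}{i}\,i!\,j!$ makes the two normalizations agree. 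Your version is more elementary and self-contained (no appeal to differentiability of multilinear forms or to the symmetry of higher derivatives), and it exposes the cleaner structural fact that for symmetric factors the partial symmetrization over $S_{i,j}$ coincides with the full symmetrization over $S_{i+j}$; the paper's version is shorter in context because Lemma \ref{lemma:leibnitz} is already established for use in Theorem \ref{thm:DifferentiabilityImpliesLyapunov} and gets reused here for free.
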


We are now ready to derive the generalized Lyapunov equation.

\begin{theorem} \label{thm:DifferentiabilityImpliesLyapunov}
Let $k \geq 3$. Assume that  $\mathcal{V}\colon Y \rightarrow \R$ is
$(k+1)$-times Fr\'echet differentiable in a neighborhood of 0 and that the assumptions of Proposition \ref{prop:HJB} hold.
Then for all $z_1$,...,$z_k \in \mathcal{D}(A)$,
\begin{equation} \label{eq:Lyapunov1}
\sum_{i=1}^k
\mathcal{D}^k\mathcal{V}(0)(z_1,...,z_{i-1},A_{\Pi}z_i,z_{i+1},...,z_k)
= \frac{1}{2\alpha} \mathcal{R}_k(z_1,...,z_k),
\end{equation}
where the multilinear form $\mathcal{R}_k\colon Y^k \rightarrow \R$ is given by:
\begin{align*}
& \mathcal{R}_k= 2k(k-1) \text{\emph{Sym}}_{1,k-1}\big( \mathcal{C}_1 \otimes
\mathcal{G}_{k-1} \big)  + \sum_{i=2}^{k-2} \begin{pmatrix} k \\ i \end{pmatrix}
\text{\emph{Sym}}_{i,k-i} \big( (\mathcal{C}_i + i \mathcal{G}_i) \otimes
(\mathcal{C}_{k-i} + (k-i) \mathcal{G}_{k-i}) \big), \\
& \text{where: } \quad \begin{cases} \begin{array}{rl}
 \mathcal{C}_i(z_1,...,z_i) = & \displaystyle{D^{i+1}
\mathcal{V}(0)(B,z_1,...,z_i)}, \ \text{for $i=1,...,k-2$} \\
\displaystyle \mathcal{G}_i(z_1,...,z_i) = & \displaystyle{\frac{1}{i} \Big[
\sum_{j=1}^i D^i \mathcal{V}(0)(z_1,...,z_{j-1},Nz_j,z_{j+1},...,z_i) \Big],} \ \text{for $i=1,...,k-1$.}
\end{array} \end{cases}
\end{align*}
\end{theorem}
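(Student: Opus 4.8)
The plan is to derive \eqref{eq:Lyapunov1} by differentiating the Hamilton--Jacobi--Bellman equation \eqref{eq:HJB} $k$ times with respect to $y$ in directions $z_1,\dots,z_k\in\mathcal{D}(A)$ and then setting $y=0$, in the same spirit as the computation carried out above for $k=3$. First I would record the facts used throughout: since $\mathcal{V}\geq 0=\mathcal{V}(0)$, the origin minimizes $\mathcal{V}$, so $D\mathcal{V}(0)=0$; consequently $\mathcal{C}_1=D^2\mathcal{V}(0)(B,\cdot)=\langle\Pi B,\cdot\rangle$, $\mathcal{G}_1=0$, and $A_\Pi z=Az-\tfrac1\alpha\mathcal{C}_1(z)B$ for $z\in\mathcal{D}(A)$; moreover $D^k\mathcal{V}(0)\in\mathcal{M}(Y^k,\R)$ is symmetric. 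One also has to check that the termwise differentiation is licit: \eqref{eq:HJB} is an identity between functions of $y\in\mathcal{D}(A)\cap Y_0$, and each of the maps $y\mapsto D\mathcal{V}(y)(Ay)$, $y\mapsto\tfrac12\|y\|_Y^2$ and $y\mapsto(D\mathcal{V}(y)(Ny+B))^2$, restricted to a neighbourhood of $0$ in $\mathcal{D}(A)$ (equipped with the graph norm), is $k$-times continuously differentiable — this follows from the $(k+1)$-fold Fr\'echet differentiability of $\mathcal{V}$ on $Y_0$, the boundedness of the affine maps $y\mapsto Ay$ and $y\mapsto Ny+B$ from $\mathcal{D}(A)$ to $Y$ (using \eqref{ass:A2}), and the boundedness of the duality pairing.

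Next I would treat the three terms of \eqref{eq:HJB} separately at $y=0$. The quadratic term $\tfrac12\|y\|_Y^2$ has vanishing $k$-th derivative for $k\geq 3$. For $D\mathcal{V}(y)(Ay)$, viewing it as the bounded bilinear pairing of $y\mapsto D\mathcal{V}(y)$ (whose $j$-th derivative is $D^{j+1}\mathcal{V}(y)$) with the linear map $y\mapsto Ay$ and applying the Leibniz rule, the only term surviving at $y=0$ is the one in which exactly one of the $k$ differentiations acts on $Ay$ (the all-on-$D\mathcal{V}$ term produces $D^{k+1}\mathcal{V}(0)(z_1,\dots,z_k,A0)=0$); by symmetry of $D^k\mathcal{V}(0)$ this yields $\sum_{i=1}^k D^k\mathcal{V}(0)(z_1,\dots,z_{i-1},Az_i,z_{i+1},\dots,z_k)$. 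For $\phi(y):=D\mathcal{V}(y)(Ny+B)$, splitting $\phi=D\mathcal{V}(\cdot)(N\cdot)+D\mathcal{V}(\cdot)(B)$ and arguing the same way, I would check that $\phi(0)=0$ and $D^i\phi(0)=\mathcal{C}_i+i\mathcal{G}_i$ for $i=1,\dots,k-1$; then Lemma \ref{lemma:leibnitz} applied to $f=g=\phi$ gives, the $i=0$ and $i=k$ terms dropping since $\phi(0)=0$,
\[
D^k\big[\phi^2\big](0)=\sum_{i=1}^{k-1}\binom{k}{i}\,\text{Sym}_{i,k-i}\big((\mathcal{C}_i+i\mathcal{G}_i)\otimes(\mathcal{C}_{k-i}+(k-i)\mathcal{G}_{k-i})\big).
\]

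Finally I would assemble the pieces: the $k$-th derivative of \eqref{eq:HJB} at $0$ reads $\sum_{i=1}^k D^k\mathcal{V}(0)(z_1,\dots,Az_i,\dots,z_k)=\tfrac1{2\alpha}D^k[\phi^2](0)$. It remains to peel off the endpoint terms $i=1$ and $i=k-1$ of the sum above: using $\mathcal{G}_1=0$, bilinearity of $\otimes$, and the identity $\text{Sym}_{k-1,1}(\mathcal{T}\otimes\mathcal{C}_1)=\text{Sym}_{1,k-1}(\mathcal{C}_1\otimes\mathcal{T})$ for symmetric $\mathcal{T}$ (which follows from Lemma \ref{lemma:symmetry} by polarization), these two terms add up to $2k\,\text{Sym}_{1,k-1}(\mathcal{C}_1\otimes\mathcal{C}_{k-1})+2k(k-1)\,\text{Sym}_{1,k-1}(\mathcal{C}_1\otimes\mathcal{G}_{k-1})$. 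The second summand together with the remaining middle sum $\sum_{i=2}^{k-2}$ is precisely $\mathcal{R}_k$. The first summand still carries $D^k\mathcal{V}(0)$ linearly: by the explicit formula for $\text{Sym}_{1,k-1}$ and the symmetry of $\mathcal{C}_{k-1}(\cdot)=D^k\mathcal{V}(0)(B,\cdot)$, one has $2k\,\text{Sym}_{1,k-1}(\mathcal{C}_1\otimes\mathcal{C}_{k-1})(z_1,\dots,z_k)=2\sum_{i=1}^k\mathcal{C}_1(z_i)\,D^k\mathcal{V}(0)(z_1,\dots,z_{i-1},B,z_{i+1},\dots,z_k)$, so moving $-\tfrac1{2\alpha}$ times this quantity to the left-hand side turns each $Az_i$ into $Az_i-\tfrac1\alpha\mathcal{C}_1(z_i)B=A_\Pi z_i$, which is exactly \eqref{eq:Lyapunov1}. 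The main obstacle I anticipate is the combinatorial bookkeeping — in particular recognising that the endpoint Leibniz terms $i\in\{1,k-1\}$ are the ones carrying the $D^k\mathcal{V}(0)$ contribution that has to be reabsorbed into $A_\Pi$ — together with verifying the degenerate low-order cases ($k=3$, where the middle sum is empty, and $k=4$) and the (routine) legitimacy of the termwise differentiation despite $N$ being defined only on $V$.
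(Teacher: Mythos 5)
Your proposal is correct and follows essentially the same route as the paper: differentiate the HJB equation $k$ times, use $D\mathcal{V}(0)=0$ and the Leibnitz formula of Lemma \ref{lemma:leibnitz} applied to $\mathcal{W}(y)=D\mathcal{V}(y)(Ny+B)$ with $D^i\mathcal{W}(0)=\mathcal{C}_i+i\mathcal{G}_i$, and then absorb the $i\in\{1,k-1\}$ endpoint terms carrying $\mathcal{C}_1\otimes\mathcal{C}_{k-1}$ into the operator $A_\Pi=A-\frac{1}{\alpha}BB^*\Pi$. Your bookkeeping of the factor $2$ in $2k\,\text{Sym}_{1,k-1}(\mathcal{C}_1\otimes\mathcal{C}_{k-1})$ and of the symmetrization identity matching the $i=1$ and $i=k-1$ terms is consistent with what the paper does (and in fact slightly more carefully stated).
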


\begin{remark}
The meaning of the expression on the left-hand side of \eqref{eq:Lyapunov1} is the following:
\begin{align*}
& \sum_{i=1}^k D^k \mathcal{V}(0)(z_1,...,z_{i-1},A_{\Pi} z_i, z_{i+1},...,z_k)
= D^k \mathcal{V}(0)(A_{\Pi}z_1,z_2,...,z_k) \\
& \qquad + D^k \mathcal{V}(0)(z_1,A_{\Pi}z_2,z_3,...,z_k)
+ ...
+ D^k \mathcal{V}(0)(z_1,...,z_{k-1},A_{\Pi}z_k).
\end{align*}
\end{remark}

\begin{proof}[Proof of Theorem \ref{thm:DifferentiabilityImpliesLyapunov}]
We differentiate  the HJB equation  $k$ times.
First observe that since $k \geq 3$,
\begin{equation} \label{eqDerivative0}
D^k (\|y\|_Y^2)= 0.
\end{equation}
We then have
\begin{align}
D^k\big[ D\mathcal{V}(y)(Ay) \big](z_1,...,z_k)
= & D^{k+1} \mathcal{V}(y)(Ay,z_1,...,z_k) \notag \\
& \qquad + \sum_{i=1}^k D^k \mathcal{V}(y)
(z_1,...,z_{i-1},Az_i,z_{i+1},...,z_k). \label{eqDerivative1}
\end{align}
Therefore, the $k$-th derivative of $y \mapsto D\mathcal{V}(y)(Ay)$, evaluated
at $y= 0$, is given by
\begin{equation} \label{eqDerivative2}
\sum_{i=1}^k D^k \mathcal{V}(0) (z_1,...,z_{i-1},Az_i,z_{i+1},...,z_k).
\end{equation}
For all $y \in \mathcal{D}(A)$ we set $\mathcal{W}(y)= D\mathcal{V}(y)(Ny+B)$.
It remains to compute the $k$-th derivative of $y \in \mathcal{D}(A) \mapsto
\mathcal{W}(y)^2$ at $y= 0$.
Similarly to \eqref{eqDerivative1},
\begin{align*}
D^i \mathcal{W}(y)(z_1,...,z_i)
= & D^{i+1} \mathcal{V}(y)(Ny+B,z_1,...,z_i) \\
& \qquad + \sum_{j=1}^i D^i \mathcal{V}(y)
(z_1,...,z_{j-1},Nz_j,z_{j+1},...,z_i),
\end{align*}
and therefore,
\begin{equation} \label{eqDerivative3}
D^i \mathcal{W}(0)= \mathcal{C}_i+ i \mathcal{G}_i.
\end{equation}
Using Lemma \ref{lemma:leibnitz} and observing that $D^0 \mathcal{W}(0)=
\mathcal{W}(0)= 0$, we obtain
\begin{align} 
D^k \big[ \mathcal{W}(y)^2 \big]_{| y= 0}=\ & \sum_{i=0}^k \begin{pmatrix} k \\ i \end{pmatrix} \text{Sym}_{i,k-i} \big(
D^i \mathcal{W}(0) \otimes D^{k-i} \mathcal{W}(0) \big) \notag \\
=\ & \sum_{i=1}^{k-1} \begin{pmatrix} k \\ i \end{pmatrix}
\text{Sym}_{i,k-i} \big( (\mathcal{C}_i + i \mathcal{G}_i) \otimes (\mathcal{C}_
{k-i} + (k-i) \mathcal{G}_{k-i} ) \big). \label{eqDerivative4}
\end{align}
We compute now the summands of the above expression for $i=1$ and $i=k-1$.
Note first that
\begin{equation*}
\mathcal{G}_1= 0 \quad \text{and} \quad
\mathcal{C}_1(z)= B^* \Pi z.
\end{equation*}
Therefore,
\begin{align}
& \text{Sym}_{1,k-1} \big( (\mathcal{C}_1 + \mathcal{G}_1) \otimes (\mathcal{C}_
{k-1}+ (k-1) \mathcal{G}_{k-1}) \big) \notag \\
& \qquad = \text{Sym}_{1,k-1} \big( \mathcal{C}_1 \otimes \mathcal{C}_{k-1}
\big)
+ (k-1) \text{Sym}_{1,k-1} \big( \mathcal{C}_1 \otimes \mathcal{G}_{k-1} \big)
\label{eqDerivative5}
\end{align}
and moreover
\begin{align}
\text{Sym}_{1,k-1} \big( \mathcal{C}_1 \otimes \mathcal{C}_{k-1}
\big)(z_1,...,z_k)
= \ & \sum_{j=1}^k \mathcal{C}_1(z_j)
\mathcal{C}_{k-1}(z_1,...,z_{j-1},z_{j+1},...,z_k) \notag \\
= \ & \sum_{j=1}^k B^* \Pi z_j D^k \mathcal{V}(0)(z_1,...,z_{j-1},B,z_{j+1},...,z_k) \notag \\
= \ & \sum_{j=1}^k D^k \mathcal{V}(0)(z_1,...,z_{j-1},BB^* \Pi z_j,
z_{j+1},...,z_k). \label{eqDerivative6}
\end{align}
Combining \eqref{eqDerivative4}, \eqref{eqDerivative5}, and
\eqref{eqDerivative6}, we obtain
\begin{align}
 D^k \big[ \mathcal{W}(y)^2 \big]_{| y= 0}(z_1,...,z_k) = \ & \sum_{j=1}^k D^k
\mathcal{V}(0)(z_1,...,z_{j-1},BB^* \Pi z_j, z_{j+1},...,z_k) \notag \\
 & \quad + \sum_{i=2}^{k-2} \begin{pmatrix}k \\ i \end{pmatrix}
\text{Sym}_{i,k-i} \big( (\mathcal{C}_i + i \mathcal{G}_i) \otimes (\mathcal{C}_
{k-i} + (k-i) \mathcal{G}_{k-i} ) \big)(z_1,...,z_k) \notag \\
& \quad + 2k(k-1) \text{Sym}_{1,k-1} \big( \mathcal{C}_1 \otimes
\mathcal{G}_{k-1} \big)(z_1,...,z_k).
\label{eqDerivative7}
\end{align}
From \eqref{eqDerivative0}, \eqref{eqDerivative2}, and
\eqref{eqDerivative7}, we deduce \eqref{eq:Lyapunov1}.
\end{proof}

\section{Construction of the polynomial approximation}
\label{sectionWellPosednessLyapunov}

In this section, we construct a sequence $(\mT_k)_{k
\geq 2}$,  with $\mT_k \in \mM(Y^k,\R)$, which enables us to obtain a polynomial
approximation of the value function $\mathcal{V}$.
For all $k \geq 3$,  $\mathcal{T}_k$ is the unique solution to a multilinear
equation, with a right-hand side which depends explicitly on $N$, $B$, and
$\mathcal{T}_2$,...,$\mathcal{T}_{k-1}$. This multilinear equation is suggested
by the structure of \eqref{eq:Lyapunov1}. The existence will be obtained under the generic
assumptions \eqref{ass:A1}-\eqref{ass:A4}.

We start with an existence result for multilinear equations with particular
right-hand sides, which will be relevant once we turn to  \eqref{eq:Lyapunov1}.

\begin{theorem} \label{thm:existenceUniquenessLyapunov}
Let $k \geq 2$. For $1\le i < j \le k,$ let $\mathcal{R}_{ij} \in \mM(Y^k,\R).$
Then, there exists a unique $\mT \in \mM(Y^k,\R)$ such that for all
$(z_1,...,z_k) \in \mathcal{D}(A)^k$,
\begin{equation} \label{eq:Lyapunov2}
\sum_{i=1}^k \mathcal{T}(z_1,...,z_{i-1},A_\Pi z_i,z_{i+1},...,z_k)
= \mathcal{R}(z_1,...,z_k),
\end{equation}
where:
\begin{equation*}
\mathcal{R}(z_1,...,z_k)
= \sum_{1 \leq i < j \leq k}
\mathcal{R}_{ij}(z_1,...,z_{i-1},Nz_i,z_{i+1},..., z_{j-1}, Nz_j,
z_{j+1},...,z_k).
\end{equation*}
Moreover, if $\mathcal{R}$ is symmetric, then $\mT$ is also symmetric.
\end{theorem}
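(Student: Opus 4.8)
The plan is to produce $\mathcal{T}$ by the Lyapunov-type representation formula attached to the exponentially stable semigroup $e^{A_\Pi t}$: one sets
\begin{equation*}
\mathcal{T}(z_1,\dots,z_k) := -\int_0^\infty \mathcal{R}\big( e^{A_\Pi t}z_1,\dots,e^{A_\Pi t}z_k \big) \dd t ,
\end{equation*}
and checks successively that (i) this integral is absolutely convergent and defines an element of $\mM(Y^k,\R)$; (ii) $\mathcal{T}$ solves \eqref{eq:Lyapunov2}; (iii) this solution is unique in $\mM(Y^k,\R)$; (iv) $\mathcal{T}$ inherits the symmetry of $\mathcal{R}$. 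The formula is dictated by the heuristic that, for $z_i \in \mD(A)$, the map $t \mapsto \mathcal{T}(e^{A_\Pi t}z_1,\dots,e^{A_\Pi t}z_k)$ has derivative $\sum_{i=1}^k \mathcal{T}(e^{A_\Pi t}z_1,\dots,A_\Pi e^{A_\Pi t}z_i,\dots,e^{A_\Pi t}z_k) = \mathcal{R}(e^{A_\Pi t}z_1,\dots,e^{A_\Pi t}z_k)$ by \eqref{eq:Lyapunov2}, and tends to $0$ at $t=+\infty$. The genuinely delicate point is (i); the remaining steps are routine applications of the stable-semigroup calculus.

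For (i), the difficulty is that $\mathcal{R}$ contains the operator $N$ and is therefore defined only for arguments in $V$, and that in each summand $\mathcal{R}_{ij}$ the operator $N$ acts on \emph{two} arguments, while the analytic semigroup $e^{A_\Pi t}$ smooths $Y$ into $V$ only at the rate $\| e^{A_\Pi t}z \|_V \le C\, t^{-1/2}\| z \|_Y$ for $t \in (0,1]$; the crude bound $\| Ne^{A_\Pi t}z_i \|_Y \| Ne^{A_\Pi t}z_j \|_Y \le C\, t^{-1}\| z_i \|_Y \| z_j \|_Y$ is then not integrable near $0$. This is circumvented by the square-function estimate
\begin{equation*}
\int_0^\infty \| e^{A_\Pi t}z \|_V^2 \dd t \le C \| z \|_Y^2 ,
\end{equation*}
which follows by testing $\frac{\dd}{\dd t} y = A_\Pi y$, $y(0)=z$, with $y(t)$: the bounded $V$-$Y$ form associated with $-A_\Pi$ is $a$ plus the nonnegative bounded form $(v,w)\mapsto \frac{1}{\alpha}(B^*\Pi v)(B^*w)$, hence still satisfies the coercivity estimate \eqref{ass:A1}, and the exponential stability of $e^{A_\Pi t}$ on $Y$ controls $\int_0^\infty \| y(t) \|_Y^2 \dd t$ by $\| z \|_Y^2$ (this is the same kind of $L^2(0,\infty;V)$-regularity as in Lemma \ref{lemma:RegEstim}). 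Combining this estimate with $\| e^{A_\Pi t}z \|_Y \le Me^{-\omega t}\| z \|_Y$, with $N \in \mL(V,Y)$, with $| \mathcal{R}_{ij}(w_1,\dots,w_k) | \le \| \mathcal{R}_{ij} \| \prod_l \| w_l \|_Y$, and with the Cauchy--Schwarz inequality in $t$ applied to the two factors carrying a $V$-norm, each summand of the integral defining $\mathcal{T}$ is bounded by $C \prod_{l=1}^k \| z_l \|_Y$. Hence the integral converges absolutely and $| \mathcal{T}(z_1,\dots,z_k) | \le C \prod_l \| z_l \|_Y$; since multilinearity is obvious from the linearity of $e^{A_\Pi t}$ and $N$ and the multilinearity of the $\mathcal{R}_{ij}$, we obtain $\mathcal{T} \in \mM(Y^k,\R)$.

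For (ii), fix $z_1,\dots,z_k \in \mD(A)$. Because $BB^*\Pi \in \mL(Y)$ one has $\mD(A_\Pi)=\mD(A)$, so $e^{A_\Pi t}z_l \in \mD(A)$ for every $t \ge 0$, the graph norm of $e^{A_\Pi t}z_l$ decays exponentially, and $\| e^{A_\Pi t}A_\Pi z_l \|_V \le C\,t^{-1/2}\| A_\Pi z_l \|_Y$ near $0$. Using $N \in \mL(\mD(A),Y)$, it follows that for each pair $i<j$ the curve $t \mapsto (e^{A_\Pi t}z_1,\dots,Ne^{A_\Pi t}z_i,\dots,Ne^{A_\Pi t}z_j,\dots,e^{A_\Pi t}z_k)$ lies in $W^{1,1}(0,\infty;Y^k)$, so Lemma \ref{lemma:ChainRule} applied to $\mathcal{R}_{ij}$ shows that $g(t) := \mathcal{R}(e^{A_\Pi t}z_1,\dots,e^{A_\Pi t}z_k)$ belongs to $W^{1,1}(0,\infty)$ with $g'(t) = \sum_{i=1}^k \mathcal{R}(e^{A_\Pi t}z_1,\dots,e^{A_\Pi t}A_\Pi z_i,\dots,e^{A_\Pi t}z_k)$ after reordering the finite sums. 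Moreover $g(t) \to 0$ as $t \to \infty$ by exponential decay, and $g(t) \to \mathcal{R}(z_1,\dots,z_k)$ as $t \to 0^+$ because $e^{A_\Pi t}z_l \to z_l$ in the graph norm and $N \in \mL(\mD(A),Y)$. The fundamental theorem of calculus then yields
\begin{equation*}
\sum_{i=1}^k \mathcal{T}(z_1,\dots,A_\Pi z_i,\dots,z_k) = -\int_0^\infty g'(t) \dd t = \lim_{t \to 0^+} g(t) - \lim_{t \to \infty} g(t) = \mathcal{R}(z_1,\dots,z_k) ,
\end{equation*}
which is \eqref{eq:Lyapunov2}.

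For (iii), let $\mathcal{S} \in \mM(Y^k,\R)$ solve the homogeneous version of \eqref{eq:Lyapunov2}. For $z_1,\dots,z_k \in \mD(A)$ the function $t \mapsto \mathcal{S}(e^{A_\Pi t}z_1,\dots,e^{A_\Pi t}z_k)$ is continuously differentiable, with derivative $\sum_{i=1}^k \mathcal{S}(e^{A_\Pi t}z_1,\dots,A_\Pi e^{A_\Pi t}z_i,\dots,e^{A_\Pi t}z_k)$, which vanishes since $(e^{A_\Pi t}z_1,\dots,e^{A_\Pi t}z_k) \in \mD(A)^k$. This function is therefore constant, equal to $\mathcal{S}(z_1,\dots,z_k)$ at $t=0$ and tending to $0$ at $+\infty$, so $\mathcal{S}$ vanishes on $\mD(A)^k$; by density of $\mD(A)$ in $Y$ and continuity (Lemma \ref{lemma:continuousTensors1}), $\mathcal{S}=0$. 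Applying this to the difference of two solutions of \eqref{eq:Lyapunov2} gives uniqueness. Finally, (iv) is immediate from the integral formula: if $\mathcal{R}$ is invariant under permutations of its arguments, then so is the integrand $t \mapsto \mathcal{R}(e^{A_\Pi t}z_1,\dots,e^{A_\Pi t}z_k)$ for each $t>0$, hence $\mathcal{T}(z_{\sigma(1)},\dots,z_{\sigma(k)}) = \mathcal{T}(z_1,\dots,z_k)$ for every permutation $\sigma$.
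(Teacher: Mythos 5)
Your proof is correct and follows essentially the same route as the paper: the same representation formula $\mathcal{T}(z_1,\dots,z_k)=-\int_0^\infty \mathcal{R}(e^{A_\Pi t}z_1,\dots,e^{A_\Pi t}z_k)\dd t$, the same key square-function estimate $\int_0^\infty \| e^{A_\Pi t}z \|_V^2 \dd t \le C\|z\|_Y^2$ combined with H\"older in $t$ to absorb the two occurrences of $N$, and the same fundamental-theorem-of-calculus and decay arguments for existence, uniqueness and symmetry. Two points of execution differ, both harmlessly. First, you derive the square-function estimate by an energy argument for $A_\Pi$, where the paper simply cites \cite[Theorem 2.2, Part II, Chapter 3]{Benetal07}; note that your claim that the perturbation form $(v,w)\mapsto \frac{1}{\alpha}(B^*\Pi v)(B^*w)$ is nonnegative is false in general, but it is bounded on $Y\times Y$, which is all that is needed to preserve the G\r{a}rding inequality \eqref{ass:A1} (with a modified $\lambda$), so your estimate survives. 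Second, to show that $\mathcal{T}$ solves \eqref{eq:Lyapunov2} you apply the chain rule directly for $z_i\in\mD(A)$, using the integrable $t^{-1/2}$ smoothing of the analytic semigroup into $V$ to place $t\mapsto Ne^{A_\Pi t}z_i$ in $W^{1,1}(0,\infty;Y)$; the paper instead first restricts to $\mD(A^2)^k$, rewrites each $\mathcal{R}_{ij}$ through $NA_\Pi^{-1}\in\mL(Y)$ so that Lemma \ref{lemma:ChainRule} applies verbatim to a $Y^k$-valued $W^{1,1}$ curve, and then extends to $\mD(A)^k$ by density. Your shortcut is legitimate, but the one step you should make explicit is that $\frac{\dd}{\dd t}\big(Ne^{A_\Pi t}z_i\big)=Ne^{A_\Pi t}A_\Pi z_i$ holds in the $W^{1,1}(0,\infty;Y)$ sense; this follows because $\int_0^t e^{A_\Pi s}A_\Pi z_i \dd s$ converges as a Bochner integral in $V$ (thanks to the $s^{-1/2}$ bound), so the bounded operator $N\in\mL(V,Y)$ can be pulled inside the integral.
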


\begin{proof}
\emph{Part 1: existence}. For all $(z_1,...,z_k) \in Y^k$, we define:
\begin{equation*}
\mathcal{T}(z_1,...,z_k)
= - \int_0^\infty \mathcal{R}(e^{A_\Pi t} z_1,..., e^{A_\Pi t} z_k) \dd t.
\end{equation*}
Let us justify the well-posedness of $\mathcal{T}$. All along the article, the constant $C$ is a generic constant whose value can change. We have
\begin{align*}
& \int_0^\infty \big| \mathcal{R}_{12}(N e^{A_\Pi t} z_1, N e^{A_\Pi t} z_2,
e^{A_\Pi t} z_3,..., e^{A_\Pi t} z_k) \big| \dd t  \\
& \quad \leq C \int_0^\infty \Big[ \| Ne^{A_\Pi t} z_1 \|_Y \  \| Ne^{A_\Pi t}
z_2 \|_Y \
\prod_{i=3}^k \| e^{A_\Pi t} z_i \|_Y \Big] \dd t  \\
& \quad \leq C \int_0^\infty \Big[ \|  e^{A_\Pi t} z_1 \|_V \  \|  e^{A_\Pi t}
z_2 \|_V \
\prod_{i=3}^k \| e^{A_\Pi t} z_i \|_Y \Big] \dd t .
\end{align*}
Here, the last step follows from the fact that $N \in \mL(V,Y)$. Using the generalized
H\"older inequality, we obtain
\begin{align*}
& \int_0^\infty \big| \mathcal{R}_{12}(N e^{A_\Pi t} z_1, N e^{A_\Pi t} z_2,
e^{A_\Pi t} z_3,..., e^{A_\Pi t} z_k) \big| \dd t \\
& \quad \leq  C \| e^{A_\Pi \cdot } z_1 \|_{L^2(0,\infty;V)}
\| e^{A_\Pi \cdot } z_1 \|_{L^2(0,\infty;V)}
\prod_{i=3}^k \| e^{A_\Pi \cdot} z_i \|_{L^\infty(0,\infty;Y)} .
\end{align*}
Since the semigroup $e^{A_\Pi t}$ is analytic and exponentially stable on $Y$, it follows from \cite[Theorem 2.2, Part II, Chapter 3]{Benetal07} that
\begin{align}\label{eq:Lyapounov2c}
 \int_0^\infty \big| \mathcal{R}_{12}(N e^{A_\Pi t} z_1, N e^{A_\Pi t} z_2,
e^{A_\Pi t} z_3,..., e^{A_\Pi t} z_k) \big| \dd t \leq \ &  C \prod_{i=1}^k \|
z_i \|_Y.
\end{align}
The same estimate can be derived for the other terms of $\mathcal{R}$. It
follows that
\begin{equation} \label{eq:Lyapounov2a}
\int_0^\infty | \mathcal{R}(e^{A_\Pi t} z_1,...,e^{A_\Pi t} z_k) | \dd t
\leq C \prod_{i=1}^k \| z_i \|_Y,
\end{equation}
which proves that $\mathcal{T}$ is well-defined on $Y^k$. If $\mathcal{R}$ is symmetric, then $\mathcal{T}$ is also symmetric, by \eqref{eq:Lyapounov2a}.

We next prove that $\mathcal{T}$ is a solution to \eqref{eq:Lyapunov2}. Let us
first assume that $(z_1,...,z_k) \in \mathcal{D}(A^2)^k$ and define
\begin{equation*}
F\colon t \in [0,\infty) \mapsto \mathcal{R}(e^{A_\Pi t} z_1,...,e^{A_\Pi t}
z_k).
\end{equation*}
We already know that $F \in L^1(0,\infty)$, by \eqref{eq:Lyapounov2a}. In fact,
$F \in W^{1,1}(0,\infty)$, with
\begin{equation} \label{eq:Lyapounov2b}
F'(t)= \sum_{i=1}^k \mathcal{R}(e^{A_\Pi t} z_1,..., e^{A_\Pi t} z_{i-1},
e^{A_\Pi t} A_{\Pi} z_i, e^{A_\Pi t} z_{i+1},..., z_k).
\end{equation}
This is seen as follows. For all $i<j$, for $t \in [0,\infty)$, we define
\begin{equation*}
F_{ij}(t)= \mathcal{R}_{ij}(e^{A_\Pi t}z_1,...,e^{A_\Pi t} z_{i-1}, N e^{A_\Pi
t} z_i, e^{A_\Pi t} z_{i+1},...,e^{A_\Pi t} z_{j-1}, N e^{A_\Pi t} z_j, e^{A_\Pi
t} z_{j+1},..., e^{A_\Pi t} z_k),
\end{equation*}
so that $F= \sum_{1 \leq i < j \leq k} F_{ij}$.
To simplify, we focus on $F_{12}$.
By \cite[Theorem 5.1.5]{CurZ95}, $A_{\Pi}^{-1}$ exists and $A_{\Pi}^{-1} \in \mathcal{L}(Y,\mathcal{D}(A))$.
Using the commutativity of
$A_\Pi$, $A_{\Pi}^{-1}$, and $e^{A_\Pi t}$, we find that
\begin{align*}
F_{12}(t)= \ & \mathcal{R}_{12}(NA_\Pi^{-1} e^{A_\Pi t} A_\Pi z_1,
NA_\Pi^{-1} e^{A_\Pi t} A_\Pi z_2,
e^{A_\Pi t}z_3,...,e^{A_\Pi t}z_k)  =  \widehat{\mathcal{R}}_{12} \circ
g_{12}(t),
\end{align*}
where
\begin{align*}
 \widehat{\mathcal{R}}_{12}(y_1,...,y_k)&:=
\mathcal{R}_{12}(NA_{\Pi}^{-1} y_1, NA_\Pi^{-1} y_2, y_3,...,y_k) , \\
g_{12}(t)&:= (e^{A_\Pi t} A_{\Pi} z_1,e^{A_\Pi
t} A_{\Pi} z_2, e^{A_\Pi t} z_3...,e^{A_\Pi t} z_k).
\end{align*}
Since $NA_{\Pi}^{-1} \in \mathcal{L}(Y),$ it follows that
$\widehat{\mathcal{R}}_{12} \in \mM(Y^k,\R).$ Moreover, for $z_i \in \mD(A^2)$
it holds that $A_\Pi e^{A_\Pi \cdot } A_\Pi z_i \in L^1(0,\infty;Y)$ and
hence, $g_{12}\in W^{1,1}(0,\infty;Y^k)$. By Lemma \ref{lemma:ChainRule} we
obtain that $F_{12} \in W^{1,1}(0,\infty)$ and that
\begin{align*}
F'_{12}(t)= \ & \mR_{12} (N e^{A_\Pi t} A_{\Pi} z_1, Ne^{A_\Pi
t} z_2, e^{A_\Pi t} z_3,..., e^{A_\Pi t} z_k) \\
& \quad + \mR_{12} (N e^{A_\Pi t} z_1, Ne^{A_\Pi t} A_{\Pi}
z_2, e^{A_\Pi t} z_3,..., e^{A_\Pi t} z_k) \\
& \quad + \sum_{i=3}^k \mR_{12} (N e^{A_\Pi t} z_1, N
e^{A_\Pi t} z_2, e^{A_\Pi t} z_3,..., e^{A_\Pi t} z_{i-1}, e^{A_\Pi t } A_{\Pi}
z_i, e^{A_\Pi t} z_{i+1},..., e^{A_\Pi t} z_k ).
\end{align*}
Similar formulas can be obtained in the same manner for $F_{ij}$. It follows
that $F \in W^{1,1}(0,\infty)$ and that \eqref{eq:Lyapounov2b} holds. Since
$\mathcal{R}$ is continuous and $ \| e^{A_\Pi t} z_i \|_Y \underset{t \to
\infty}{\longrightarrow} 0$, we deduce $F(t) \underset{t \to
\infty}{\longrightarrow} 0$.
Moreover, $F \in W^{1,1}(0,\infty)$ implies that it is absolutely continuous
and therefore, for all $T \geq 0$,
\begin{equation*}
F(T)-F(0)= \int_0^T F'(t) \dd t.
\end{equation*}
Passing to the limit when $T \to \infty$, we obtain
\begin{align*}
F(0)= \ & -\int_0^{\infty} F'(t) \dd t =
-\int_0^\infty \sum_{i=1}^k \mathcal{R}(e^{A_\Pi t} z_1,..., e^{A_\Pi t}
z_{i-1}, e^{A_\Pi t} A_{\Pi} z_i, e^{A_\Pi t} z_{i+1},..., z_k) \dd t \\
= \ & \sum_{i=1}^k \mathcal{T}(z_1,...,z_{i-1}, A_\Pi z_i, z_{i+1},...,z_k).
\end{align*}
Since $F(0)= \mathcal{R}(z_1,...,z_k)$, equation \eqref{eq:Lyapunov2} is satisfied. Since
$\mD(A^2)$ is dense in $\mD(A)$, equation \eqref{eq:Lyapunov2} remains valid for
$z_i \in \mD(A)$, by continuity.

\emph{Part 2: uniqueness}.
Let $\widetilde{\mT} \in \mM(Y^k,\R)$ satisfy \eqref{eq:Lyapunov2} and let us
set
$\mathcal{E}= \widetilde{\mT}-\mT$. For all $(z_1,...,z_k) \in
\mathcal{D}(A)^k$,
\begin{equation} \label{eq:Lyapounov2d}
\sum_{i=1}^k \mathcal{E}(z_1,...,z_{i-1},A_\Pi z_i, z_{i+1},...,z_k)= 0.
\end{equation}
For a fixed $(z_1,...,z_k) \in \mathcal{D}(A)^k$, we define
\begin{equation*}
G\colon t \in [0,\infty) \mapsto \mathcal{E}(e^{A_\Pi t} z_1,...,e^{A_\Pi t}
z_k).
\end{equation*}
As in the second part of the proof, we can show that $G \in W^{1,1}(0,\infty)$,
with
\begin{equation} \label{eq:Lyapounov2e}
G'(t)= \sum_{i=1}^k \mathcal{E}(e^{A_\Pi t}z_1,..., e^{A_\Pi t} z_{i-1}, A_\Pi
e^{A_\Pi t} z_i, e^{A_\Pi t} z_{i+1}, ..., e^{A_\Pi t} z_k).
\end{equation}
Note that for all $t$, we have that $e^{A_\Pi t} z_i \in
\mathcal{D}(A).$ Hence, we deduce from \eqref{eq:Lyapounov2d} that $G'(t)= 0$
and therefore that $G$ is constant.
For all $i$, we have $\| e^{A_\Pi t}z_i \|_Y \underset{t \to
\infty}{\longrightarrow}
0$, and thus $G(t) \underset{t \to
\infty}{\longrightarrow} 0$ since $\mathcal{E}$ is continuous. This implies that
$G$ is identically 0. Since $G(0)= \mathcal{E}(z_1,...,z_k)$ it follows that
$\mathcal{E}$
is null on $\mathcal{D}(A)^k$. By continuity, $\mathcal{E}$ is null on $Y^k$.
This  concludes the proof.
\end{proof}

\begin{remark} \label{rem:genExistenceUniquenessLyapunov}
Theorem \ref{thm:existenceUniquenessLyapunov} can be generalized to equations
with a right-hand side of the following form:
\begin{align}
\mathcal{R}(z_1,...,z_k)= \ & \mathcal{R}^{(0)}(z_1,...,z_k)
+ \sum_{1 \leq i \leq k}
\mathcal{R}_i^{(1)}(z_1,...,z_{i-1},Nz_i,z_{i+1},...,z_k) \notag \\
& \qquad + \sum_{1 \leq i < j \leq k}
\mathcal{R}_{ij}^{(2)}(z_1,...,z_{i-1},Nz_i,z_{i+1},...,z_{j-1},Nz_j,z_{j+1},...
,z_k), \label{eq:generalizedRHS}
\end{align}
where  $\mathcal{R}^{(0)}$, $(\mathcal{R}_i^{(1)})_{1 \leq i \leq k}$, and
$(\mathcal{R}_{ij}^{(2)})_{1 \leq i < j \leq k}$ are bounded multilinear
forms.
\end{remark}

In the following theorem, we use the nonnegative self-adjoint  Riccati operator
$\Pi$ which was defined in \eqref{eq:algebraicRiccati}.

\begin{theorem} \label{thm:mult_lin_form}
There exists a unique sequence of symmetric multilinear forms $(\mT_k)_{k \geq 2,}$ with $\mT_k \in
\mM(Y^k,\R)$ and a unique sequence of multilinear forms $(\mR_k)_{k \geq 3},$ with $\mR_k \in
\mM(\mD(A)^k,\R)$ such that for all $(z_1,z_2) \in Y^2$,
\begin{equation} \label{eqLyapounov3d}
\mathcal{T}_2(z_1,z_2):= (z_1,\Pi z_2)
\end{equation}
and such that for  all $k \geq 3$, for all $(z_1,...,z_k) \in \mathcal{D}(A)^k$,
\begin{subequations} \label{eqLyapounov3}
\begin{align}
& \sum_{i=1}^k \mathcal{T}_k (z_1,...,z_{i-1},A_\Pi z_i, z_{i+1},...,z_k)=
\mathcal{R}_k(z_1,...,z_k), \label{eqLyapounov3a} \\
& \mathcal{R}_k= 2k(k-1) \text{\emph{Sym}}_{1,k-1}\big( \mathcal{C}_1 \otimes
\mathcal{G}_{k-1} \big)  + \sum_{i=2}^{k-2} \begin{pmatrix} k \\ i \end{pmatrix}
\text{\emph{Sym}}_{i,k-i} \big( (\mathcal{C}_i + i \mathcal{G}_i) \otimes
(\mathcal{C}_{k-i} + (k-i) \mathcal{G}_{k-i}) \big), \label{eqLyapounov3b} \\
& \text{where:} \quad \begin{cases} \begin{array}{rl}
 \mathcal{C}_i(z_1,...,z_i) = \ &
\displaystyle{\mathcal{T}_{i+1}(B,z_1,...,z_i),  \quad \text{for $i=1,...,k-2$,}
}\\
\displaystyle \mathcal{G}_i(z_1,...,z_i) = \ & \displaystyle{\frac{1}{i}  \Big[
\sum_{j=1}^i \mathcal{T}_i(z_1,...,z_{j-1},Nz_j,z_{j+1},...,z_i) \Big], \quad
\text{for $i=1,...,k-1$}.}
\end{array} \end{cases}
\label{eqLyapounov3c}
\end{align}
\end{subequations}
\end{theorem}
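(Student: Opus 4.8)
The plan is to construct the sequences $(\mT_k)_{k\ge 2}$ and $(\mR_k)_{k\ge 3}$ by strong induction on $k$, using Theorem \ref{thm:existenceUniquenessLyapunov} at each step to solve the generalized Lyapunov equation \eqref{eqLyapounov3a}. The base case is trivial: $\mT_2(z_1,z_2):=(z_1,\Pi z_2)$ is well-defined and symmetric because $\Pi\in\mL(Y)$ is self-adjoint, and its existence and uniqueness are exactly the content of \eqref{eq:algebraicRiccati} together with assumption \eqref{ass:A4} and \cite[Theorem 6.2.7]{CurZ95}. Then I would fix $k\ge 3$ and assume that symmetric forms $\mT_2,\dots,\mT_{k-1}\in\mM(Y^j,\R)$ have already been constructed.

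The inductive step proceeds in two parts. First I define the auxiliary forms $\mathcal{C}_i$ (for $i=1,\dots,k-2$) and $\mathcal{G}_i$ (for $i=1,\dots,k-1$) via \eqref{eqLyapounov3c}; these are bounded multilinear forms on $Y^i$ because $B\in Y$, $N\in\mL(V,Y)\subset\mL(Y,\cdot)$ is not needed here — in fact the formulas involve $N$ only through an application to an argument, so boundedness on $Y^i$ follows once one observes that $\mathcal{G}_i$ is a sum of compositions of the bounded form $\mT_i$ with the substitution $z_j\mapsto Nz_j$, and $N\in\mL(V,Y)$; more precisely $\mathcal{G}_i$ as written is a priori only defined on $\mD(A)^i$ or $V^i$, but the right-hand side $\mR_k$ as a whole will be handled by Theorem \ref{thm:existenceUniquenessLyapunov}, which is precisely designed to absorb such substitutions. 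Indeed, expanding \eqref{eqLyapounov3b} and using the bilinearity of $\otimes$ and of $\Sym_{i,k-i}$, one checks that $\mR_k$ has exactly the structure \eqref{eq:generalizedRHS} of Remark \ref{rem:genExistenceUniquenessLyapunov}: every occurrence of $\mathcal{G}_i$ contributes one factor $Nz_j$, so the cross terms $\mathcal{C}_i\otimes\mathcal{G}_{k-i}$ and $\mathcal{G}_i\otimes\mathcal{C}_{k-i}$ give the $\mathcal{R}^{(1)}_i$-type terms, the terms $\mathcal{G}_i\otimes\mathcal{G}_{k-i}$ give the $\mathcal{R}^{(2)}_{ij}$-type terms, and $\mathcal{C}_i\otimes\mathcal{C}_{k-i}$ gives the $\mathcal{R}^{(0)}$ term; the prefactor $\Sym_{i,k-i}$ only reshuffles arguments and sums, preserving this structure. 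Second, I apply Theorem \ref{thm:existenceUniquenessLyapunov} (in the form extended by Remark \ref{rem:genExistenceUniquenessLyapunov}) to obtain a unique $\mT_k\in\mM(Y^k,\R)$ solving \eqref{eqLyapounov3a}, and I invoke the last assertion of that theorem: since $\mR_k$ is symmetric — which follows from the induction hypothesis that $\mT_2,\dots,\mT_{k-1}$ are symmetric, hence so are $\mathcal{C}_i+i\mathcal{G}_i=D^i\mathcal{W}(0)$ by the symmetry of derivatives, combined with the second statement of Lemma \ref{lemma:symmetry} that $\Sym_{i,k-i}$ of a tensor product of symmetric forms is symmetric — the solution $\mT_k$ is symmetric. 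This closes the induction. Uniqueness of the whole sequence follows because at each level $\mR_k$ is uniquely determined by $\mT_2,\dots,\mT_{k-1}$ through the explicit formulas, and then $\mT_k$ is uniquely determined by $\mR_k$ via the uniqueness part of Theorem \ref{thm:existenceUniquenessLyapunov}.

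The main obstacle is the verification that $\mR_k$ genuinely fits the template \eqref{eq:generalizedRHS}: one must carefully track how $\Sym_{i,k-i}$ interacts with the substitution operators $z_j\mapsto Nz_j$ hidden inside $\mathcal{G}_i$, and confirm that after symmetrization each summand still has at most two arguments passed through $N$ and that the "outer" multilinear coefficient is bounded on all of $Y^k$ (not merely on $\mD(A)^k$). The key point making this work is that $\mathcal{C}_i(z_1,\dots,z_i)=\mT_{i+1}(B,z_1,\dots,z_i)$ is bounded on $Y^i$ since $B\in Y$, so the only unboundedness-on-$Y$ comes from $N$, and $N$ appears linearly and at most twice in each term of $\mR_k$ — exactly the scope of Theorem \ref{thm:existenceUniquenessLyapunov} and Remark \ref{rem:genExistenceUniquenessLyapunov}. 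A secondary, purely bookkeeping point is to record that the constructed $\mR_k$ lies in $\mM(\mD(A)^k,\R)$, which is immediate since it is a finite sum of bounded forms composed with $N\in\mL(\mD(A),V)\subset\mL(\mD(A),Y)$ by \eqref{ass:A2}.
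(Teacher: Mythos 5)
Your proposal is correct and follows essentially the same route as the paper: induction on $k$, with $\mathcal{T}_2$ given by the Riccati operator $\Pi$, the right-hand side $\mathcal{R}_k$ built from $\mathcal{T}_2,\dots,\mathcal{T}_{k-1}$ and verified to fit the template \eqref{eq:generalizedRHS} of Remark \ref{rem:genExistenceUniquenessLyapunov} (since $N$ appears at most twice per summand), existence and uniqueness of $\mathcal{T}_k$ from Theorem \ref{thm:existenceUniquenessLyapunov}, and symmetry propagated through Lemma \ref{lemma:symmetry}. The only cosmetic difference is that the paper checks the symmetry of $\mathcal{C}_i$ and $\mathcal{G}_i$ directly from the symmetry of the $\mathcal{T}_j$ rather than via the identification with $D^i\mathcal{W}(0)$, but this changes nothing of substance.
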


\begin{proof}
We prove this claim by induction.  The induction assumption is the following:
for all $p \geq 2$, there exists a unique family $(\mathcal{T}_k)_{2 \leq k \leq
p},$ $\mT_k\in \mM(Y^k,\R)$ and a unique family $(\mR_k)_{3 \leq k \leq p},
\mR_k \in \mM(\mD(A)^k,\R)$ such that
\eqref{eqLyapounov3d} and \eqref{eqLyapounov3} hold, for all $k= 3,...,p$.

For $p=2$, it suffices to  check that $(z_1,z_2) \in Y^2 \mapsto (z_1, \Pi z_2)
\in \R$ is continuous, which directly follows from the Cauchy-Schwarz
inequality and the fact that $\Pi \in \mL(Y)$.

Let $p \geq 2$, assume that the induction assumption is satisfied. Let
$(\mT_k)_{2 \leq k \leq p}, \mT_k\in \mM(Y^k,\R)$ and $(\mR_k)_{3 \leq k \leq
p}, \mR \in \mM(\mD(A)^k,\R)$ be such that \eqref{eqLyapounov3d} and
\eqref{eqLyapounov3} hold, for all $k= 3,...,p$.

Let $\mathcal{R}_{p+1}$ be defined by \eqref{eqLyapounov3b} and
\eqref{eqLyapounov3c} (taking $k= p+1$). The multilinear mapping
$\mathcal{R}_{p+1}$ is well-defined, since \eqref{eqLyapounov3b} and
\eqref{eqLyapounov3c} are defined by
$\mT_2$,...,$\mT_{p}$. Moreover, $\mathcal{R}_{p+1}$ can be
written as a sum of multilinear mappings in which the operator $N$ appears at
most twice. More precisely, since by assumption,
$\mathcal{T}_2$,...,$\mathcal{T}_p$ are bounded, $\mathcal{R}_{p+1}$ can be
written in the form \eqref{eq:generalizedRHS}. Therefore, by Theorem
\ref{thm:existenceUniquenessLyapunov}, there exists a unique $\mT_{p+1}\in
\mM(Y^{p+1},\R)$ satisfying \eqref{eqLyapounov3a}. By induction, $\mathcal{T}_2$,...,$\mathcal{T}_p$ are all symmetric. One can easily check that for $i=1$,...,$p-2$, $\mathcal{C}_i$ is symmetric and for $i=1,...,p-1$, $\mathcal{G}_i$ is symmetric. Therefore, by Lemma \ref{lemma:symmetry}, $\mathcal{R}_{p+1}$ is symmetric and finally, by Theorem \ref{thm:existenceUniquenessLyapunov}, $\mathcal{T}_{p+1}$ is symmetric.
This proves the induction
assumption for $p+1$ and concludes the proof.
\end{proof}

\begin{remark}
In the finite-dimensional case $Y=\mathbb R^n,$ a multilinear form 
$\mathcal{S} \in \mathcal{M}(Y^k,\R)$ can be naturally identified with a 
 {\em multidimensional array} (or {\em tensor}) $\mathbf{S} \in  \mathbb 
R^{n\times \dots \times n}.$ Denoting with $\mathrm{vec}(\mathbf{S})  \in 
\R^{n^k}$ the associated {\em vectorization of  $\mathbf{S}$} allows to 
interpret \eqref{eq:Lyapunov2} as a linear tensor equation of the form
\begin{align*}
 \sum_{i=1}^k (\underbrace{I_n \otimes \dots \otimes I_n}_{i-1} \otimes 
A_{\Pi,n}^T \otimes \underbrace{I_n \otimes \dots \otimes I_n}_{k-i} )  
\mathrm{vec}(\mathbf{T}) = \mathrm{vec}{(\mathbf{R})},
\end{align*}
where $I_n $ is the identity matrix in $\R^{n\times n}$ and $A_{\Pi,n} \in 
\R^{n\times n}$ denotes a finite-dimensional approximation of the operator 
$A_\Pi.$ Let us particularly emphasize that these types of equations can often 
be efficiently solved by tensor methods, see e.g.\@ \cite{Gra04}. 
\end{remark}

For all $p \geq 2$, we define the function $\mathcal{V}_p$ as follows:
\begin{equation}\label{eq:kk20}
  \begin{aligned}
    \mathcal{V}_p & \colon Y \to \R,\quad
    \mathcal{V}_p(y) = \sum_{k=2}^p \frac{1}{k!} \mathcal{T}_k(y,\dots,y),
  \end{aligned}
  \end{equation}
where the sequence $(\mathcal{T}_k)_{k \geq 2}$ is given by Theorem \eqref{thm:mult_lin_form}.
The definition of $\mathcal{V}_p$ is motivated by Theorem \ref{thm:DifferentiabilityImpliesLyapunov}.

\begin{remark}{\em
In Theorem \ref{thm:subOptimality}, we prove that $\mathcal{V}_p$ is an approximation of order $p+1$ of $\mathcal{V}$, in the neighborhood of 0. This result is obtained without assuming the differentiability of
$\mathcal{V}$.}
\end{remark}

\section{Well-posedness of the closed-loop system}
\label{sectionClosedLoop}

In this section, we analyse the non-linear feedback law $\mathbf{u}_p \colon y \in V \rightarrow \R$, defined by
\begin{equation} \label{eq:feedback_law}
\mathbf{u}_p(y) = -
\frac{1}{\alpha} \big( D \mathcal{V}_p(y), Ny+B \big) =
- \frac{1}{\alpha} \Big( \sum_{k=2}^p \frac{1}{(k-1)!} \mathcal{T}_k(N y +
B,y^{\otimes k-1} ) \Big).
\end{equation}
Its form is suggested by \eqref{eqFeedbackGeneral} and \eqref{eq:kk20}.
Note that the explicit expression of $\mathbf{u}_p$ follows from Lemma \ref{lemma:continuousTensors2} and from the symmetry of the multilinear forms $\mathcal{T}_k$.
In this section, we discuss the well-posedness of the closed-loop system
\begin{equation}\label{eq:cl_poly}
\frac{\dd }{\dd t}y = Ay + (Ny+ B) \mathbf{u}_p(y), \quad
y(0) = y_0
\end{equation}
for a fixed value of $p \geq 2$.
We recall that throughout this section and the remainder of the paper, assumptions
\eqref{ass:A1}-\eqref{ass:A4} are supposed to hold.
In Theorem \ref{thm:well_posed_feedback}, we will establish the existence of a
solution to
\eqref{eq:cl_poly}, provided that $\| y_0 \|_Y$ is sufficiently small. We denote
this closed-loop solution by
\begin{equation*}
S(\mathbf{u}_p,y_0).
\end{equation*}
The distinction with the notation $S(u,y_0)$ used for an open-loop control $u
\in L^2(0,\infty;Y)$ will be clear from the context. We also denote by
\begin{equation} \label{eqDefClosedLoop}
\mathbf{U}_p(y_0;t)= \mathbf{u}_p(S(\mathbf{u}_p,y_0;t))
\end{equation}
the open-loop control generated with the feedback law $\mathbf{u}_p$ and the
initial condition $y_0$. We will prove in Corollary \ref{cor:V_loc_bound} that
$\mathbf{U}_p(y_0)$ is well-defined in $L^2(0,\infty)$, provided that $\| y_0
\|_Y$ is small enough.

The strategy that we use to prove the well-posedness of \eqref{eq:cl_poly} is rather standard and has been applied in the context of infinite-dimensional systems several times, see e.g.\@ \cite{BreKP16,Ray06,TheBR10}. It consists in proving that the non-linear part of the closed-loop system satisfies a Lipschitz continuity property. To this purpose, we introduce the nonlinear mapping $F:W_\infty \rightarrow L^2(0,\infty;V^*)$ defined by
\begin{equation}\label{eq:nonlinearity}
F(y)= - \frac{1}{\alpha} (Ny+B) \Big( \sum_{k=3}^p \frac{1}{(k-1)!}
\mathcal{T}_k(Ny+B,y^{\otimes k-1} \Big)
- \frac{1}{\alpha} \Big( Ny \mathcal{T}_2(Ny+B,y) + B \mathcal{T}_2(Ny,y) \Big).
\end{equation}
It can be expressed as the sum of monomial functions of degree
greater or equal to 2.
Observe that the closed-loop system \eqref{eq:cl_poly} can be written as
follows:
\begin{align}
\frac{\dd}{\dd t} y = \ & A y + (Ny+B)\mathbf{u}_p(y) \notag \\
= \ & A y + (Ny+B)\Big(-\frac{1}{\alpha} \sum_{k=2}^p \frac{1}{(k-1)!} \mT_k(Ny+B,y^{\otimes k-1}) \Big) \notag \\
= \ & \Big( A - \frac{1}{\alpha}BB^* \Pi \Big) y + F(y)
=  A_\Pi y + F(y). \label{eq:nonlinearities}
\end{align}
In Lemma \eqref{lem:LipschitzNonLin} we prove that $F$ is well-defined and Lipschitz
continuous on bounded subsets (for the $L^\infty(0,\infty;Y)$-norm), and that
the associated Lipschitz modulus can be made as small as necessary, by
restricting the size of the considered subset. The well-posedness of \eqref{eq:cl_poly} is then obtained in Theorem \eqref{thm:well_posed_feedback} with a fixed-point argument.

We set
\begin{align*}
  W_\infty:= W(0,\infty) = \left\{y \in L^2(0,\infty;V): \frac{\dd
}{\dd t}y \in L^2(0,\infty;V^*)\right \}.
\end{align*}
We recall that $W_\infty$ is continuously embedded in $C(0,\infty;Y)$
\cite[Theorem 3.1]{LioM72}: there
exists a constant $C_0>0$ such that for all $y \in W_\infty$,
\begin{equation} \label{eq:WinC}
\| y \|_{L^\infty(0,\infty;Y)} \leq C_0 \| y \|_{W_\infty}.
\end{equation}

The following lemma is a technical lemma, used for analysing the non-linear mapping $F$.

\begin{lemma} \label{lem:Lipschitz1}
There exists a constant $C>0$ such that for all $\delta \in [0,1]$, for all $k=2,...,p$, and for all
$y_1$ and $y_2 \in B_Y(\delta) \cap V$,
\begin{align*}
& \| (Ny_2+B)\mathcal{T}_k(Ny_2+B,y_2^{\otimes k-1})
- (Ny_1+B)\mathcal{T}_k(Ny_1+B,y_1^{\otimes k-1}) \|_{V^*} \\
& \qquad \leq C \big( \delta \| y_2-y_1 \|_V + (\| y_1 \|_V + \| y_2 \|_V )
\  \| y_2-y_1 \|_Y \big).
\end{align*}
\end{lemma}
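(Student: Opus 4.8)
The plan is to prove the estimate by a telescoping argument: write the difference of the two vector-valued expressions as a sum of terms in each of which only one factor changes, and bound each such term in $V^*$ by exploiting that the "active" factor is controlled in $Y$ (giving the $\|y_2-y_1\|_Y$ contributions) while the remaining factors are bounded either in $Y$ (by $\delta$) or in $V$ (giving the $\|y_i\|_V$ contributions). First I would recall that $\mathcal{T}_k \in \mM(Y^k,\R)$, so by \eqref{eq:OperatorNormTensor2} the scalar $\mathcal{T}_k(Ny+B,y^{\otimes k-1})$ is bounded by $C\|Ny+B\|_Y \|y\|_Y^{k-1}$; since $N \in \mathcal L(V,Y)$ and $\|y\|_Y \le \delta \le 1$, for $y \in B_Y(\delta)\cap V$ this quantity is bounded by $C(\delta + 1)\delta^{k-1} \le C\delta^{k-1}$, and likewise the vector prefactor $Ny+B$ is bounded in $Y$ by $C(1+\delta) \le C$. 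I would also note the trivial bound $\|w\|_{V^*}\le C\|w\|_Y$ for $w\in Y$, so it suffices throughout to produce the $Y$-norm on the factor being differenced, except for the one place where the operator $N$ must be "moved onto" a factor that is only bounded in $V$.

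The main computation is the decomposition
\begin{align*}
& (Ny_2+B)\mathcal{T}_k(Ny_2+B,y_2^{\otimes k-1}) - (Ny_1+B)\mathcal{T}_k(Ny_1+B,y_1^{\otimes k-1}) \\
& \quad = N(y_2-y_1)\,\mathcal{T}_k(Ny_2+B,y_2^{\otimes k-1}) \\
& \qquad + (Ny_1+B)\,\mathcal{T}_k\big(N(y_2-y_1),y_2^{\otimes k-1}\big) \\
& \qquad + (Ny_1+B)\sum_{j=1}^{k-1} \mathcal{T}_k\big(Ny_1+B, y_1^{\otimes j-1}, y_2-y_1, y_2^{\otimes k-1-j}\big).
\end{align*}
For the first line, $\|N(y_2-y_1)\|_{V^*}\le C\|N(y_2-y_1)\|_Y \le C\|y_2-y_1\|_V$ and the scalar factor is $\le C\delta^{k-1}$, so this term is $\le C\delta^{k-1}\|y_2-y_1\|_V \le C\delta\|y_2-y_1\|_V$ since $\delta\le 1$ and $k\ge 2$. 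For the third line, the vector prefactor is bounded in $Y$ by $C$, and each scalar $\mathcal{T}_k$ term is bounded by $C\|Ny_1+B\|_Y\,\|y_1\|_Y^{j-1}\,\|y_2-y_1\|_Y\,\|y_2\|_Y^{k-1-j} \le C\delta^{k-2}\|y_2-y_1\|_Y \le C\|y_2-y_1\|_Y$; this is absorbed into the $(\|y_1\|_V + \|y_2\|_V)\|y_2-y_1\|_Y$ term (indeed even into $\delta\,$-free constant times $\|y_2-y_1\|_Y$, but the stated form is weaker so it is fine). The only genuinely delicate term is the middle one: here $N(y_2-y_1)$ sits inside $\mathcal{T}_k$, and bounding it in $Y$ would cost $\|y_2-y_1\|_V$, which is not quite what we want multiplied against — so instead I keep $\mathcal{T}_k$ acting and estimate $|\mathcal{T}_k(N(y_2-y_1),y_2^{\otimes k-1})| \le C\|y_2-y_1\|_V\,\|y_2\|_Y^{k-1} \le C\delta^{k-1}\|y_2-y_1\|_V$, again $\le C\delta\|y_2-y_1\|_V$; the vector prefactor $Ny_1+B$ is bounded in $Y$ (hence in $V^*$) by $C$. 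Summing the three contributions gives exactly the claimed bound.

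The step I expect to require the most care is making sure the operator $N$ is always applied to a factor for which the $V$-norm is genuinely available and accounted for on the right-hand side: in every term of the decomposition, $N$ is applied either to $y_2-y_1$ (producing the $\|y_2-y_1\|_V$ with a compensating $\delta$) or to $y_1$ (a fixed factor, bounded in $Y$ because $\|y_1\|_Y\le\delta\le 1$ — this is why no $\|y_1\|_V$ appears on those terms), never to an interpolated or "free-running" factor. Once the decomposition is organized this way, all bounds are immediate applications of \eqref{eq:OperatorNormTensor2}, $N\in\mathcal L(V,Y)$, and $\|\cdot\|_{V^*}\le C\|\cdot\|_Y$; one then takes $C$ to be the maximum of the finitely many constants arising for $k=2,\dots,p$. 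The same decomposition and bounds handle $k=2$ with $y^{\otimes 1}=y$, and there is no essential difference between the $k=2$ and $k\ge 3$ cases here, so a single unified argument suffices.
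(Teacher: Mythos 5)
Your decomposition is the right one --- it is essentially the paper's splitting into the three terms $(a)$, $(b)$, $(c)$ --- but the estimates rest on a claim that is not available: you repeatedly bound $\|Ny\|_Y$ by a constant for $y\in B_Y(\delta)\cap V$, for instance ``$\|Ny+B\|_Y\le C(\delta+1)$'' and ``the vector prefactor $Ny+B$ is bounded in $Y$ by $C$''. Assumption \eqref{ass:A2} only gives $N\in\mathcal{L}(V,Y)$, so $\|Ny\|_Y\le\|N\|_{\mathcal{L}(V,Y)}\|y\|_V$, and $\|y\|_V$ is \emph{not} controlled by $\delta$ on $B_Y(\delta)\cap V$. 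This breaks your first term: the scalar factor $\mathcal{T}_k(Ny_2+B,y_2^{\otimes k-1})$ is bounded by $C(\|y_2\|_V+1)\delta^{k-1}$, not by $C\delta^{k-1}$, so pairing it with $\|N(y_2-y_1)\|_{V^*}\le C\|y_2-y_1\|_V$ produces a product $\|y_2\|_V\,\|y_2-y_1\|_V$ that cannot be absorbed into the claimed right-hand side. The way out --- and what the paper does --- is to use the dual bound $N\in\mathcal{L}(Y,V^*)$ (which follows from $N^*\in\mathcal{L}(V,Y)$ in \eqref{ass:A2}): estimate the prefactor $N(y_2-y_1)$ directly in $V^*$ by $\|N\|_{\mathcal{L}(Y,V^*)}\|y_2-y_1\|_Y$, so that the unavoidable $\|y_2\|_V$ coming from the scalar factor multiplies $\|y_2-y_1\|_Y$ and lands in the allowed term $(\|y_1\|_V+\|y_2\|_V)\|y_2-y_1\|_Y$. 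The same device, $\|Ny_1+B\|_{V^*}\le\|N\|_{\mathcal{L}(Y,V^*)}+\|B\|_Y$, is what legitimately bounds the prefactor of your second and third groups of terms; your parenthetical ``bounded in $Y$ (hence in $V^*$) by $C$'' reaches a true conclusion through a false intermediate step.

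A second, related slip: for the third group you claim each summand is bounded by a constant times $\|y_2-y_1\|_Y$ and that ``the stated form is weaker so it is fine.'' It is not: a bound $C\|y_2-y_1\|_Y$ does \emph{not} imply the stated right-hand side. Take $y_1=0$ and $y_2=\varepsilon v$ with $\|v\|_Y=1$ and $\delta=\varepsilon$; then $C\|y_2-y_1\|_Y=C\varepsilon$ while $\delta\|y_2-y_1\|_V+(\|y_1\|_V+\|y_2\|_V)\|y_2-y_1\|_Y=O(\varepsilon^2)$. What actually saves these terms in the paper's proof is that the factor $\|Ny_1+B\|_Y\le\|N\|_{\mathcal{L}(V,Y)}\|y_1\|_V+\|B\|_Y$ arising from the argument of $\mathcal{T}_k$ supplies precisely the $\|y_1\|_V$ needed to match $(\|y_1\|_V+\|y_2\|_V)\|y_2-y_1\|_Y$, with the leftover $\|B\|_Y\,\delta^{k-2}\|y_2-y_1\|_Y$ piece absorbed through $\delta^{k-2}\le\delta$ and the embedding $V\hookrightarrow Y$. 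The entire content of this lemma is the bookkeeping of which factors are paid for in $V$ and which in $Y$, and that is exactly the step the proposal gets wrong; the decomposition itself is fine.
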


\begin{proof}
Let $\delta \in [0,1]$, let $y_1$ and $y_2 \in B_Y(\delta)$. Then we have
\begin{align*}
& \| (Ny_2+B)\mathcal{T}_k(Ny_2+B,y_2^{\otimes k-1})
- (Ny_1+B)\mathcal{T}_k(Ny_1+B,y_1^{\otimes k-1}) \|_{V^*} \\
& \qquad \leq  \underbrace{\| N(y_2-y_1) \mathcal{T}_k(Ny_2+B, y_2^{\otimes k-1})
\|_{V^*}}_{=:(a)}
+ \underbrace{\| (Ny_1+B) \mathcal{T}_k(N(y_2-y_1),y_2^{\otimes k-1})
\|_{V^*}}_{=:(b)} \\
& \qquad \qquad + \underbrace{\| (Ny_1+B) \big( \mathcal{T}_k(Ny_1+B,y_2^{\otimes k-1}) -
\mathcal{T}_k(Ny_1+B,y_1^{\otimes k-1}) \big) \|_{V^*}}_{=:(c)}.
\end{align*}
We need to find a bound on  the three terms of the right-hand side of the
above inequality. Note first that $\| Ny_1 + B \|_{V^*} \leq M:= \| N
\|_{\mathcal{L}(Y,V^*)} + \| B \|_Y$.
We have
\begin{align*}
(a) \leq \ & \| N \|_{\mathcal{L}(Y,V^*)} \  \| y_2-y_1 \|_Y \  \|
\mathcal{T}_k \| \  \big( \| N \|_{\mathcal{L}(V,Y)} \  \| y_2 \|_V + \| B
\|_Y  \big) \delta^{k-1}, \\
(b) \leq \ & M \  \| \mathcal{T}_k \| \  \| N \|_{\mathcal{L}(V,Y)}
\  \| y_2-y_1 \|_V \  \delta^{k-1}, \\
(c) \leq \ & M (k-1) \delta^{k-2} \  \| \mathcal{T}_k \| \
\big( \| N \|_{\mathcal{L}(V,Y)} \  \| y_1 \|_V + \| B \|_Y  \big) \
\|y_2-y_1 \|_Y.
\end{align*}
For the upper estimate of $(c)$, we used Lemma \ref{lemma:continuousTensors1}
and the fact that
\begin{equation*}
\| \mathcal{T}_k(Ny_1+B,\cdot,...,\cdot) \| \leq \big( \| N
\|_{\mathcal{L}(V,Y)} \cdot \| y_1 \|_V + \| B \|_Y  \big) \cdot \|
\mathcal{T}_k \|.
\end{equation*}
The lemma follows, since $\delta^{k-1} \leq \delta$ and since $V$ is
continuously embedded in $Y$.
\end{proof}

We now prove a Lipschitz continuity property satisfied by $F$.

\begin{lemma} \label{lem:LipschitzNonLin}
The mapping $F$ is well-defined.
Moreover, there exists a constant $C_1>0$ such that for all $\delta \in [0,1]$,
for all $y_1$ and $y_2 \in W_\infty$ with $\| y_1 \|_{L^\infty(0,\infty;Y)} \leq
\delta$ and $\| y_2 \|_{L^\infty(0,\infty;Y)} \leq \delta$,
\begin{equation} \label{eq:LipschitzNonLin1}
\| F(y_2)-F(y_1) \|_{L^2(0,\infty;V^*)} \leq C_1 \big( \delta + \| y_1
\|_{L^2(0,\infty;V)} + \| y_2 \|_{L^2(0,\infty;V)} \big) \| y_2-y_1
\|_{W_\infty}.
\end{equation}
\end{lemma}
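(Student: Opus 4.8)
The plan is to reduce the claim to pointwise-in-time estimates on the finitely many monomial terms making up $F$, and then to integrate over $(0,\infty)$ using the embedding \eqref{eq:WinC}. Throughout, the constant $C$ is generic.

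\emph{Well-definedness.} Fix $y \in W_\infty$ and set $R := \|y\|_{L^\infty(0,\infty;Y)}$, which is finite by \eqref{eq:WinC}. Writing $\mathcal{T}_k(Ny+B,\cdot) = \mathcal{T}_k(Ny,\cdot) + \mathcal{T}_k(B,\cdot)$ and using $\|\mathcal{T}_k\| < \infty$ together with $N \in \mathcal{L}(V,Y)$, the continuous extension of $N$ to $\mathcal{L}(Y,V^*)$ provided by \eqref{ass:A2}, and $B \in Y \hookrightarrow V^*$, I would bound each monomial of $F(y)(t)$ in the $V^*$-norm by estimating all but one of its $y(t)$-factors by the constant $R$ and keeping a single factor $\|y(t)\|_V$ or $\|y(t)\|_Y$ genuine; since every monomial of $F$ has degree $\ge 2$ this is always possible. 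This gives $\|F(y)(t)\|_{V^*} \le c(R)\,\big(\|y(t)\|_V + \|y(t)\|_Y\big)$ for a.e.\ $t$, and the right-hand side lies in $L^2(0,\infty)$ since $L^2(0,\infty;V) \hookrightarrow L^2(0,\infty;Y)$; hence $F(y) \in L^2(0,\infty;V^*)$.

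\emph{Lipschitz estimate, terms of degree $\ge 3$.} For $k = 3,\dots,p$ the monomial $(Ny+B)\mathcal{T}_k(Ny+B,y^{\otimes k-1})$ is exactly the quantity estimated in Lemma \ref{lem:Lipschitz1}. Applying that lemma at a.e.\ time $t$ — legitimate because $\|y_i\|_{L^\infty(0,\infty;Y)} \le \delta$ forces $y_i(t) \in B_Y(\delta)$ and $y_i \in L^2(0,\infty;V)$ forces $y_i(t) \in V$ — yields, pointwise, a bound of the form $C\big(\delta\|y_2(t)-y_1(t)\|_V + (\|y_1(t)\|_V+\|y_2(t)\|_V)\|y_2(t)-y_1(t)\|_Y\big)$ with a $\delta$-independent constant. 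Squaring, integrating over $(0,\infty)$, bounding $\int_0^\infty (\|y_1\|_V+\|y_2\|_V)^2\|y_2-y_1\|_Y^2\,dt$ by (a constant times) $\|y_2-y_1\|_{L^\infty(0,\infty;Y)}^2\big(\|y_1\|_{L^2(0,\infty;V)}^2+\|y_2\|_{L^2(0,\infty;V)}^2\big)$ and $\int_0^\infty \delta^2\|y_2-y_1\|_V^2\,dt$ by $\delta^2\|y_2-y_1\|_{L^2(0,\infty;V)}^2$, and finally invoking \eqref{eq:WinC} (so that $\|y_2-y_1\|_{L^\infty(0,\infty;Y)} \le C_0\|y_2-y_1\|_{W_\infty}$ and $\|y_2-y_1\|_{L^2(0,\infty;V)} \le \|y_2-y_1\|_{W_\infty}$) produces, for each such $k$, a bound of the claimed form.

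\emph{Lipschitz estimate, degree-$2$ term, and the main obstacle.} The $k=2$ contribution to $F$ is $Ny\,\mathcal{T}_2(Ny+B,y)+B\,\mathcal{T}_2(Ny,y)$, which equals $(Ny+B)\mathcal{T}_2(Ny+B,y) - B B^*\Pi y$. Crucially, it is \emph{not} the monomial handled by Lemma \ref{lem:Lipschitz1}, and one cannot recover it by subtracting off the difference, because the leftover $B B^*\Pi(y_2-y_1)$ is linear in $y$ and carries no small factor — this is the conceptual obstacle. Instead I would estimate the genuinely nonlinear expression $Ny\,\mathcal{T}_2(Ny,y) + Ny\,\mathcal{T}_2(B,y) + B\,\mathcal{T}_2(Ny,y)$ directly, mimicking the proof of Lemma \ref{lem:Lipschitz1}: each difference $m_i(y_2)-m_i(y_1)$ telescopes into terms each containing a factor $(y_2-y_1)$ and, since every $m_i$ has degree $\ge 2$, at least one further factor $y_j$; one then estimates the $(y_2-y_1)$-factor and the remaining $y_j$-factor in the $V$-norm, the $Y$-norm, or by $\delta\le 1$ as appropriate, and squares, integrates and applies \eqref{eq:WinC} as above. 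The delicate technical point throughout is the bookkeeping: for each term one must retain inside the time integral exactly one factor that is square-integrable in time — namely $\|y_2(t)-y_1(t)\|_Y$, $\|y_j(t)\|_V$, or $\|y_j(t)\|_Y$ — and bound the remaining $y$-factors either by the constant $\delta$ or in $L^\infty(0,\infty;Y)$, so that the integral over the \emph{unbounded} interval $(0,\infty)$ converges and the final estimate has precisely the form $C_1\big(\delta + \|y_1\|_{L^2(0,\infty;V)} + \|y_2\|_{L^2(0,\infty;V)}\big)\|y_2-y_1\|_{W_\infty}$. Summing the finitely many contributions $k=2,\dots,p$ with their coefficients $\tfrac{1}{\alpha(k-1)!}$ yields the constant $C_1$.
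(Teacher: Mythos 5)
Your proof is correct and follows essentially the same route as the paper: apply Lemma \ref{lem:Lipschitz1} pointwise in time to the monomials with $k\geq 3$, treat the $k=2$ part of $F$ by the same telescoping estimates (the paper disposes of this with the phrase ``with estimates similar to the ones used in Lemma \ref{lem:Lipschitz1}'', whereas you correctly spell out why that term is not literally covered by the lemma and must be handled on its own), and conclude via the embedding \eqref{eq:WinC}. The only minor divergence is the well-definedness of $F$, which you establish by a direct pointwise bound while the paper deduces it from $F(0)=0$ together with the Lipschitz estimate; both are valid.
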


\begin{proof}
Observe that $F(0)= 0$. Therefore, \eqref{eq:LipschitzNonLin1} will ensure that
$F(y) \in L^2(0,\infty;V^*)$ (at least for $\| y \|_{L^\infty(0,\infty;Y)} \leq
1$, but the well-posedness can actually be checked for any $y$). Let $y_1$ and $y_2 \in
W_\infty$ be such that $\| y_1 \|_{L^\infty(0,\infty;Y)} \leq \delta$ and $\|
y_2 \|_{L^\infty(0,\infty;Y)} \leq \delta$.
By Lemma \ref{lem:Lipschitz1},
\begin{align*}
& \big\| \big[ \big( Ny_2(\cdot)+B \big) \mathcal{T}_k \big(
Ny_2(\cdot)+B,y_2^{\otimes k-1}(\cdot) \big) \big]
- \big[ \big( Ny_1(\cdot)+B \big) \mathcal{T}_k \big( Ny_1(\cdot)+B,y_1^{\otimes
k-1}(\cdot) \big) \big] \big\|_{L^2(0,\infty;V^*)} \\
& \qquad \leq C \big( \delta \| y_2 - y_1 \|_{L^2(0,\infty;V)} + (\| y_1
\|_{L^2(0,\infty;V)} + \| y_2 \|_{L^2(0,\infty;V)} ) \| y_2 - y_1
\|_{L^\infty(0,\infty;Y)} \big).
\end{align*}
With estimates similar to the ones used in Lemma \ref{lem:Lipschitz1}, one can
show that
\begin{align*}
& \big\| \big[ Ny_2(\cdot) \mathcal{T}_2 \big( Ny_2(\cdot)+B,y_2(\cdot) \big) +
B \mathcal{T}_2 \big( Ny_2(\cdot),y_2(\cdot) \big) \big] \\
& \quad - \big[ Ny_1(\cdot) \mathcal{T}_2 \big( Ny_1(\cdot)+B,y_1(\cdot) \big) +
B \mathcal{T}_2 \big( Ny_1(\cdot),y_1(\cdot) \big)
\big]
\big \|_{L^2(0,\infty;V^*)} \\
& \qquad \qquad \leq C \big( \delta \| y_2 - y_1 \|_{L^2(0,\infty;V)} + (\| y_1
\|_{L^2(0,\infty;V)} + \| y_2 \|_{L^2(0,\infty;V)} ) \| y_2 - y_1
\|_{L^\infty(0,\infty;Y)} \big).
\end{align*}
Using the continuous embedding of $W_\infty$ in $L^\infty(0,\infty;Y)$, we
obtain \eqref{eq:LipschitzNonLin1}, which concludes the proof.
\end{proof}

With regard to a fixed-point argument, let us consider the linearized
nonhomogeneous system associated to \eqref{eq:cl_poly}
\begin{equation}\label{eq:nonhomg}
 \frac{\dd }{\dd t} z = A_\Pi z  + f, \quad z(0) =y_0
\end{equation}
for which we have the following result.

\begin{proposition}
\label{prop:reg_nonh}
There exists a constant $C_2>0$ such that for all $f \in L^2(0,\infty;V^*)$ and
for all $y_0 \in Y$, there exists a unique mild solution $z \in W_\infty$ to
\eqref{eq:nonhomg} satisfying
\begin{equation*}
 \| z  \| _{W_\infty} \le C_2 ( \| f\|_{L^2(0,\infty; V^*)} + \| y_0
\|_Y ) .
\end{equation*}
In particular, $z \in C_b([0,\infty);Y)$.
\end{proposition}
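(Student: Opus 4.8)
The plan is to combine the variation-of-constants representation with the parabolic-regularity properties of the analytic, exponentially stable semigroup $e^{A_\Pi t}$ that were established right after \eqref{eq:algebraicRiccati}. I would take as candidate mild solution of \eqref{eq:nonhomg}
\begin{equation*}
z(t) = e^{A_\Pi t} y_0 + \int_0^t e^{A_\Pi (t-s)} f(s) \dd s =: z_h(t) + z_i(t).
\end{equation*}
Existence of $z$ as a continuous $Y$-valued function, and uniqueness of the mild solution, are classical for analytic semigroups \cite{Paz83}; moreover, in the present Gelfand-triple setting the mild solution and the variational solution coincide, so that uniqueness within $W_\infty$ is immediate (the difference of two $W_\infty$-solutions is a mild solution of the homogeneous equation with zero initial datum, hence is identically $0$). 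The substance of the proposition is therefore the $W_\infty$-bound, which I would obtain by estimating $z_h$ and $z_i$ separately.

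For the homogeneous part, recall that $A_\Pi$ generates an exponentially stable analytic semigroup on $Y$ and that the interpolation identity $[\mD(-\tilde A_0),Y]_{1/2}=V$ holds. Consequently, by \cite[Theorem 2.2, Part II, Chapter 3]{Benetal07} — the same estimate already used in the proof of Theorem \ref{thm:existenceUniquenessLyapunov} — one gets $z_h \in L^2(0,\infty;V)$ with $\|z_h\|_{L^2(0,\infty;V)} \le C\|y_0\|_Y$. Since $A \in \mL(V,V^*)$ and $\frac{1}{\alpha}BB^*\Pi \in \mL(Y) \subset \mL(V,V^*)$, the operator $A_\Pi$ belongs to $\mL(V,V^*)$, so $\frac{\dd}{\dd t}z_h = A_\Pi z_h \in L^2(0,\infty;V^*)$ with $\|\frac{\dd}{\dd t}z_h\|_{L^2(0,\infty;V^*)} \le C\|y_0\|_Y$. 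Hence $z_h \in W_\infty$ and $\|z_h\|_{W_\infty}\le C\|y_0\|_Y$.

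For the inhomogeneous part $z_i$, which carries zero initial datum, I would invoke maximal $L^2$-parabolic regularity for $A_\Pi$ on the triple $V\subset Y\subset V^*$: since $A_\Pi$ is a bounded perturbation (by $-\frac{1}{\alpha}BB^*\Pi \in \mL(Y)$) of the operator $A$ associated with the bounded bilinear form $a$ satisfying \eqref{ass:A1}, and since $e^{A_\Pi t}$ is in addition exponentially stable, one obtains $z_i \in W_\infty$ with $\|z_i\|_{W_\infty} \le C\|f\|_{L^2(0,\infty;V^*)}$; see \cite[Part II, Chapter 3]{Benetal07} (or \cite{LasT00}). I expect this to be the main obstacle: on any finite interval $(0,T)$ the estimate is the classical Lions parabolic-regularity bound, but that bound degenerates as $T\to\infty$, and it is precisely assumption \eqref{ass:A4}, through the exponential stability of $e^{A_\Pi t}$, that upgrades it to a uniform-in-$T$, hence global, estimate. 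Everything else is either a direct citation or a routine consequence of $A_\Pi \in \mL(V,V^*)$.

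Adding the two bounds gives $\|z\|_{W_\infty} \le C_2(\|f\|_{L^2(0,\infty;V^*)}+\|y_0\|_Y)$. Finally, $z \in C_b([0,\infty);Y)$ follows from \eqref{eq:WinC}: $z$ is continuous from $[0,\infty)$ into $Y$ because $W_\infty$ is continuously embedded in $C(0,\infty;Y)$, and it is bounded because $\|z\|_{L^\infty(0,\infty;Y)} \le C_0\|z\|_{W_\infty} < \infty$.
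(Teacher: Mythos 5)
Your proposal is correct and follows exactly the route the paper intends: the paper gives no proof of Proposition \ref{prop:reg_nonh}, stating only that it ``can be verified with the techniques of \cite[Theorem 2.2, Part II, Chapter 3]{Benetal07} and \cite{TheBR10}'', which are precisely the semigroup smoothing estimate and the maximal-regularity-plus-exponential-stability argument you invoke. Your decomposition into homogeneous and inhomogeneous parts and your identification of exponential stability of $e^{A_\Pi t}$ as the ingredient that makes the Lions parabolic estimate uniform in $T$ is the standard (and correct) way to fill in that citation.
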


This result can be verified with the techniques of \cite[Theorem 2.2, Part II,
Chapter 3]{Benetal07} and \cite{TheBR10}. We are now ready to prove the well-posedness of \eqref{eq:cl_poly}.

\begin{theorem} \label{thm:well_posed_feedback}
There exist two constants $\delta_0>0$ and $C>0$
such that for all $y_0 \in B_Y(\delta_0)$, the closed-loop system
\eqref{eq:cl_poly} admits a unique solution $S(\mathbf{u}_p,y_0) \in W_\infty$ satisfying
\begin{equation}
\| S(\mathbf{u}_p,y_0) \| _{W_\infty} \leq C \| y_0 \|_Y.
\end{equation}
Moreover, the mapping $y_0 \in B_Y(\delta_0) \mapsto S(\mathbf{u}_p,y_0)$ is Lipschitz continuous.
\end{theorem}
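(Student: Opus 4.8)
The plan is to prove Theorem~\ref{thm:well_posed_feedback} via a Banach fixed-point argument in the space $W_\infty$, exploiting the decomposition \eqref{eq:nonlinearities}, namely $\frac{\dd}{\dd t}y = A_\Pi y + F(y)$, the Lipschitz estimate on $F$ from Lemma~\ref{lem:LipschitzNonLin}, and the linear regularity bound from Proposition~\ref{prop:reg_nonh}. Concretely, for $y_0 \in Y$ define the map $\Phi \colon y \mapsto z$, where $z \in W_\infty$ is the unique mild solution of the linearized nonhomogeneous system $\frac{\dd}{\dd t}z = A_\Pi z + F(y)$, $z(0)=y_0$, which is well-defined by Proposition~\ref{prop:reg_nonh} once we know $F(y) \in L^2(0,\infty;V^*)$. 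A solution of \eqref{eq:cl_poly} in $W_\infty$ is exactly a fixed point of $\Phi$, so it suffices to show that, for $\|y_0\|_Y$ small enough, $\Phi$ maps a suitable closed ball $\overline{B}_{W_\infty}(r) := \{ y \in W_\infty : \|y\|_{W_\infty} \le r \}$ into itself and is a contraction there.

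For the self-mapping property, take $y \in \overline{B}_{W_\infty}(r)$. By the continuous embedding \eqref{eq:WinC}, $\|y\|_{L^\infty(0,\infty;Y)} \le C_0 r =: \delta$, and $\|y\|_{L^2(0,\infty;V)} \le \|y\|_{W_\infty} \le r$. Since $F(0)=0$, Lemma~\ref{lem:LipschitzNonLin} applied with $y_1=0$, $y_2=y$ gives $\|F(y)\|_{L^2(0,\infty;V^*)} \le C_1(\delta + r)\, r \le C_1(C_0+1) r^2$, provided $r$ is small enough that $\delta \le 1$. Proposition~\ref{prop:reg_nonh} then yields $\|\Phi(y)\|_{W_\infty} \le C_2\big( C_1(C_0+1)r^2 + \|y_0\|_Y \big)$. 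Choosing $r$ small enough that $C_2 C_1(C_0+1) r \le \tfrac12$ and then $\delta_0$ small enough that $C_2 \|y_0\|_Y \le \tfrac{r}{2}$ for $\|y_0\|_Y \le \delta_0$, we obtain $\|\Phi(y)\|_{W_\infty} \le r$, so $\Phi$ maps $\overline{B}_{W_\infty}(r)$ into itself.

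For the contraction property, take $y_1, y_2 \in \overline{B}_{W_\infty}(r)$. Then $\Phi(y_2)-\Phi(y_1)$ solves the linear system with zero initial data and right-hand side $F(y_2)-F(y_1)$, so Proposition~\ref{prop:reg_nonh} and Lemma~\ref{lem:LipschitzNonLin} give
\begin{equation*}
\| \Phi(y_2)-\Phi(y_1) \|_{W_\infty}
\le C_2 \| F(y_2)-F(y_1) \|_{L^2(0,\infty;V^*)}
\le C_2 C_1 \big( \delta + 2r \big) \| y_2 - y_1 \|_{W_\infty},
\end{equation*}
with $\delta = C_0 r$. Shrinking $r$ further so that $C_2 C_1 (C_0 + 2) r \le \tfrac12$, $\Phi$ is a $\tfrac12$-contraction on the complete metric space $\overline{B}_{W_\infty}(r)$, hence has a unique fixed point $S(\mathbf{u}_p,y_0)$; uniqueness in all of $W_\infty$ for $\|y_0\|_Y$ small follows because any $W_\infty$-solution automatically lies in the ball once $\|y_0\|_Y \le \delta_0$ (from the a priori bound $\|y\|_{W_\infty} \le C_2(C_1(C_0+1)\|y\|_{W_\infty}^2 + \|y_0\|_Y)$, which forces $\|y\|_{W_\infty} \le r$ by a continuity/connectedness argument in $\|y_0\|_Y$, or by noting the quadratic inequality has only the small root in the relevant range). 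The bound $\|S(\mathbf{u}_p,y_0)\|_{W_\infty} \le C\|y_0\|_Y$ is read off from the self-mapping estimate at the fixed point: $\|y\|_{W_\infty} \le C_2 C_1(C_0+1)\|y\|_{W_\infty}^2 + C_2\|y_0\|_Y \le \tfrac12 \|y\|_{W_\infty} + C_2\|y_0\|_Y$, so $\|y\|_{W_\infty} \le 2C_2 \|y_0\|_Y$.

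Finally, Lipschitz continuity of $y_0 \mapsto S(\mathbf{u}_p,y_0)$ follows by the same circle of ideas: if $y, \tilde y$ are the fixed points for $y_0, \tilde y_0 \in B_Y(\delta_0)$, then $y - \tilde y$ solves the linear system with initial datum $y_0 - \tilde y_0$ and right-hand side $F(y)-F(\tilde y)$, so Proposition~\ref{prop:reg_nonh} and Lemma~\ref{lem:LipschitzNonLin} give $\|y-\tilde y\|_{W_\infty} \le C_2\big( C_1(\delta+2r)\|y-\tilde y\|_{W_\infty} + \|y_0-\tilde y_0\|_Y \big)$, and absorbing the first term on the left (using the smallness of $r$) yields $\|y - \tilde y\|_{W_\infty} \le 2C_2 \|y_0 - \tilde y_0\|_Y$. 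The main obstacle, and the place where care is genuinely required rather than routine, is ensuring that the smallness thresholds for $r$ and $\delta_0$ can be chosen consistently — in particular that the quadratic term $C_1(\delta+r)r$ really is subordinate to the linear contribution of $y_0$ — and handling uniqueness in the full space $W_\infty$ (as opposed to merely within the ball) via the a priori quadratic estimate; the Lipschitz bounds needed for all of this are already supplied by Lemmas~\ref{lem:Lipschitz1}--\ref{lem:LipschitzNonLin} and Proposition~\ref{prop:reg_nonh}, so no new analytic input beyond bookkeeping of constants is needed.
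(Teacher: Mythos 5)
Your proposal is correct and follows essentially the same route as the paper: a Banach fixed-point argument for the map $y \mapsto z$ solving $\frac{\dd}{\dd t}z = A_\Pi z + F(y)$, $z(0)=y_0$, using Lemma \ref{lem:LipschitzNonLin} and Proposition \ref{prop:reg_nonh}, with the Lipschitz dependence on $y_0$ obtained by the same absorption argument. The only cosmetic difference is that the paper contracts on the ball $\{\|y\|_{W_\infty} \le C\|y_0\|_Y\}$, which yields the linear bound directly, whereas you use a fixed-radius ball and recover the bound a posteriori from the fixed-point equation.
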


\begin{proof}
In the proof, we denote by $C_0$ the constant involved in \eqref{eq:WinC} and
by $C_1$ and $C_2$ the two constants obtained in Lemma \ref{lem:Lipschitz1} and
Lemma \ref{lem:LipschitzNonLin}.
We set
\begin{equation*}
C= C_2+1, \quad
\delta_0= \min\Big(\frac{1}{C C_0}, \frac{1}{2 C^2(C_0 + 1) C_1 C_2},
\frac{1}{2C (C_0 +2) C_1 C_2} \Big).
\end{equation*}
Let us fix $y_0 \in B_Y(\delta_0)$.
Consider the mapping $\mathcal{Z} \colon y \in W_\infty \mapsto \mathcal{Z}(y)$,
where $\mathcal{Z}(y)$ is the solution of
\begin{equation*}
\frac{\dd }{\dd t} z = A_\Pi z + F(y), \ \ z(0)=y_0,
\end{equation*}
which exists by Proposition \ref{prop:reg_nonh}.
We show that $\mathcal{Z}$ is a contraction in
\begin{equation*}
\Omega: = \big\{ y \in W_\infty \colon \| y \|_{W_\infty} \leq C \| y_0 \|_Y
\big\}.
\end{equation*}
Note that $\| y \|_{W_\infty} \leq C \| y_0 \|_Y \leq C \delta_0$ for all $y \in
\Omega$ and that
\begin{equation} \label{eq:wellPosed1}
\| y \|_{L^\infty(0,\infty;Y)}
\leq C_0 \| y \|_{W_\infty}
\leq C_0 C \delta_0
\leq 1.
\end{equation}
Let us show that $\mathcal{Z}(\Omega) \subseteq \Omega$. Let $y \in \Omega$.
Applying Lemma \ref{lem:LipschitzNonLin} (with $\delta= C_0 C \delta_0$), we
obtain
\begin{align*}
& \| F(y) \|_{L^2(0,\infty;V^*)}
= \| F(y) - F(0) \|_{L^2(0,\infty;V^*)}
\leq C_1 (\delta+ C \delta_0) \| y \|_{W_\infty} \\
& \qquad \leq C_1(C_0 C \delta_0 + C \delta_0) C \| y_0 \|_Y
\leq C^2 (C_0+1) C_1 \delta_0 \| y_0 \|_Y.
\end{align*}
Therefore, by Proposition \ref{prop:reg_nonh},
\begin{align*}
& \| \mathcal{Z}(y) \|_{W_\infty}
\leq C_2 \big( \| F(y) \|_{L^2(0,\infty;V^*)} + \| y_0 \|_Y \big) \\
& \qquad \leq \underbrace{C^2 (C_0+1) C_1 C_2 \delta_0}_{\leq 1} \| y_0 \|_Y^2 +
C_2 \| y_0
\|_Y
\leq (C_2 +1) \| y_0 \|_Y,
\end{align*}
which proves that $\mathcal{Z}(y) \in \Omega$.

Next, for $y_1$ and $y_2 \in \Omega$ we set $z=
\mathcal{Z}(y_2)-\mathcal{Z}(y_1)$. Then we have
\begin{equation*}
\frac{\dd}{\dd t} z = A_\Pi z + F(y_2)-F(y_1), \quad z(0)= 0.
\end{equation*}
Taking $\delta= C_0 C \delta_0$ and applying Lemma
\ref{lem:LipschitzNonLin} and Propositon \ref{prop:reg_nonh}, we obtain
\begin{align*}
& \| \mathcal{Z}(y_2)-\mathcal{Z}(y_1) \|_{W_\infty} = \| z \|_{W_\infty}
\leq C_2 \| F(y_2)- F(y_1) \|_{L^2(0,\infty;V^*)} \\
& \qquad \leq C_2 C_1 \big( \delta + \underbrace{\| y_1
\|_{L^2(0,\infty;V)}}_{\leq C \delta_0} + \underbrace{\| y_2
\|_{L^2(0,\infty;V)}}_{\leq C \delta_0} \big) \| y_2 - y_1 \|_{W_\infty}
\leq \underbrace{C(C_0+2)C_1 C_2 \delta_0}_{\leq 1/2} \| y_2 - y_1
\|_{W_\infty}.
\end{align*}
Hence, $\mathcal{Z}$ is a contraction and the well-posedness of
\eqref{eq:cl_poly} follows with the Banach fixed point theorem.

We finally prove that the mapping $y_0 \in B_Y(\delta_0) \mapsto
S(\mathbf{u}_p,y_0)$ is Lipschitz continuous. Let $y_{1,0}$ and $y_{2,0} \in
B_Y(\delta_0)$, let $y_1= S(\mathbf{u}_p,y_{1,0})$, let $y_2=
S(\mathbf{u}_p,y_{2,0})$, let $z= y_2-y_1$. It holds
\begin{equation*}
\frac{\dd}{\dd t}z= A_{\Pi} z + F(y_2) - F(y_1), \quad
z(0)= y_{2,0}-y_{1,0}.
\end{equation*}
By \eqref{eq:wellPosed1}, we obtain
\begin{equation*}
\| y_1 \|_{L^\infty(0,\infty;Y)} \leq C_0 C \delta_0 \quad \text{and} \quad
\| y_2 \|_{L^\infty(0,\infty;Y)} \leq C_0 C \delta_0.
\end{equation*}
Applying again Lemma \ref{lem:LipschitzNonLin} with $\delta= C_0 C \delta_0$, we
obtain that
\begin{equation*}
\| F(y_2) - F(y_1) \|_{L^2(0,\infty;V^*)}
\leq C(C_0 +2) C_1 \delta_0 \| y_2 - y_1 \|_{W_\infty}.
\end{equation*}
Therefore, by Proposition \ref{prop:reg_nonh},
\begin{align*}
\| y_2-y_1 \|_{W_\infty}
\leq \ & C_2 \| F(y_2)-F(y_1) \|_{L^2(0,\infty;V^*)} + C_2 \| y_{2,0}-y_{1,0}
\|_Y \\
\leq \ & \underbrace{C(C_0 +2) C_1 C_2 \delta_0 }_{\leq 1/2} \| y_2 - y_1
\|_{W_\infty} + C_2 \| y_{2,0} - y_{1,0} \|_Y.
\end{align*}
It follows that
\begin{equation*}
\| y_2-y_1 \|_{W_\infty} \leq 2 C_2 \| y_{2,0}-y_{1,0} \|_Y,
\end{equation*}
which proves the Lipschitz continuity of the mapping $y_0 \mapsto
S(\mathbf{u}_p,y_0)$ and concludes the proof of the theorem.
\end{proof}

\begin{corollary} \label{cor:V_loc_bound}
Let $\delta_0$ be given by Theorem \ref{thm:well_posed_feedback}.
The following mapping:
\begin{equation*}
y_0 \in B_Y(\delta_0) \mapsto \mathbf{U}_p(y_0)=
\mathbf{u}_p(S(\mathbf{u}_p,y_0;\cdot)) \in L^2(0,\infty)
\end{equation*}
is well-defined and continuous. Moreover, there exists a constant $C>0$ such
that for all $y_0 \in B_Y(\delta_0)$,
\begin{equation} \label{eq:V_loc_bound}
\mathcal{V}(y_0) \leq \mathcal{J}(\mathbf{U}_p(y_0),y_0) \leq C \|y_0\|_Y^2.
\end{equation}
\end{corollary}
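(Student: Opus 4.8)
The plan is to establish, in order: (i) $\mathbf{U}_p(y_0)\in L^2(0,\infty)$ with $\|\mathbf{U}_p(y_0)\|_{L^2(0,\infty)}\le C\|y_0\|_Y$; (ii) continuity of $y_0\mapsto\mathbf{U}_p(y_0)$; (iii) the estimate \eqref{eq:V_loc_bound}. Fix $y_0\in B_Y(\delta_0)$ and put $y=S(\mathbf{u}_p,y_0)$. By Theorem \ref{thm:well_posed_feedback}, $y\in W_\infty$ with $\|y\|_{W_\infty}\le C\|y_0\|_Y$, hence $\|y\|_{L^\infty(0,\infty;Y)}\le C_0C\|y_0\|_Y\le C_0C\delta_0$ by \eqref{eq:WinC}; since $y(t)\in V$ for a.e.\ $t$, the formula \eqref{eq:feedback_law} defines $\mathbf{u}_p(y(t))$ a.e. I would bound each summand $\mathcal{T}_k(Ny(t)+B,y(t)^{\otimes k-1})$ pointwise by a constant times $\big(\|y(t)\|_V+1\big)\|y(t)\|_Y^{k-1}$, using $|\mathcal{T}_k(z_1,\dots,z_k)|\le\|\mathcal{T}_k\|\prod_i\|z_i\|_Y$, $N\in\mathcal{L}(V,Y)$ and the embedding $V\hookrightarrow Y$, and then square and integrate in $t$. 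Because $k-1\ge1$, each resulting term keeps at least one square-integrable-in-time factor $\|y(t)\|_Y$ or $\|y(t)\|_V$; the surplus powers of $\|y(t)\|_Y$ are absorbed into the finite quantity $\|y\|_{L^\infty(0,\infty;Y)}$, so each contribution is dominated by a constant times $\|y\|_{L^2(0,\infty;V)}^2+\|y\|_{L^2(0,\infty;Y)}^2\le C\|y\|_{W_\infty}^2\le C\|y_0\|_Y^2$. This yields $\mathbf{U}_p(y_0)\in L^2(0,\infty)$ and $\|\mathbf{U}_p(y_0)\|_{L^2(0,\infty)}\le C\|y_0\|_Y$.

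For (ii), take $y_{1,0},y_{2,0}\in B_Y(\delta_0)$ and set $y_i=S(\mathbf{u}_p,y_{i,0})$, which both satisfy the bounds just obtained. Using the multilinearity of the $\mathcal{T}_k$, I would expand $\mathbf{u}_p(y_2(t))-\mathbf{u}_p(y_1(t))$ into a telescoping sum of terms, each containing exactly one occurrence of $y_2(t)-y_1(t)$ (possibly under $N$) and otherwise only $y_1(t)$, $y_2(t)$ and $B$ --- exactly as in the proof of Lemma \ref{lem:Lipschitz1}, but keeping the scalar quantity instead of multiplying by $Ny+B$. Estimating these terms in $L^2(0,\infty)$ as in the previous step, and invoking Lemma \ref{lemma:continuousTensors1} for the terms in which the change of argument is not under $N$, gives $\|\mathbf{U}_p(y_{2,0})-\mathbf{U}_p(y_{1,0})\|_{L^2(0,\infty)}\le C\,\|y_2-y_1\|_{W_\infty}$. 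Combined with the Lipschitz continuity of $y_0\mapsto S(\mathbf{u}_p,y_0)$ from Theorem \ref{thm:well_posed_feedback}, this proves that $y_0\mapsto\mathbf{U}_p(y_0)$ is Lipschitz, in particular continuous, on $B_Y(\delta_0)$.

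For (iii), observe that $y=S(\mathbf{u}_p,y_0)$ solves the closed-loop system \eqref{eq:cl_poly}; since $\mathbf{U}_p(y_0;t)=\mathbf{u}_p(y(t))$ and $\mathbf{U}_p(y_0)\in L^2(0,\infty)$ by step (i), $y$ is also the solution of \eqref{eq2.1} associated with the open-loop control $u=\mathbf{U}_p(y_0)$, so by uniqueness $y=S(\mathbf{U}_p(y_0),y_0)$ and $\mathbf{U}_p(y_0)$ is a feasible control for \eqref{eqProblem}. Hence
\[
\mathcal{J}(\mathbf{U}_p(y_0),y_0)=\frac{1}{2}\|y\|_{L^2(0,\infty;Y)}^2+\frac{\alpha}{2}\|\mathbf{U}_p(y_0)\|_{L^2(0,\infty)}^2 .
\]
Since $\mathcal{V}(y_0)=\inf_{u\in L^2(0,\infty)}\mathcal{J}(u,y_0)$ and $\mathcal{V}\ge0$, the left inequality in \eqref{eq:V_loc_bound} follows. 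For the right one, $\|y\|_{L^2(0,\infty;Y)}\le C\|y\|_{L^2(0,\infty;V)}\le C\|y\|_{W_\infty}\le C\|y_0\|_Y$ by $V\hookrightarrow Y$ and Theorem \ref{thm:well_posed_feedback}, while $\|\mathbf{U}_p(y_0)\|_{L^2(0,\infty)}\le C\|y_0\|_Y$ by step (i); therefore $\mathcal{J}(\mathbf{U}_p(y_0),y_0)\le C\|y_0\|_Y^2$.

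The only delicate step is (i). The point requiring care is that $\mathbf{u}_p$ contains the affine factor $Ny+B$, so $\mathbf{u}_p(y(\cdot))$ is not square-integrable merely because $y\in L^\infty(0,\infty;Y)$; what rescues the argument is that every monomial occurring in $\mathbf{u}_p(y)$ --- including the linear one $\langle B,\Pi y\rangle$ coming from $\mathcal{T}_2$ --- has degree at least one in $y$ and hence retains a factor $\|y(t)\|_Y$ or $\|y(t)\|_V$ that is square-integrable in time, the extra powers of $\|y\|_{L^\infty(0,\infty;Y)}=\mathcal{O}(\delta_0)$ being harmless. The same bookkeeping underlies the Lipschitz estimate in step (ii).
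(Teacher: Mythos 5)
Your proposal is correct and follows essentially the same route as the paper: the key step in both is the Lipschitz estimate for $y\mapsto\mathbf{u}_p(y(\cdot))$ in $L^2(0,\infty)$ obtained from the multilinearity of the $\mathcal{T}_k$, $N\in\mathcal{L}(V,Y)$ and the bounds of Theorem \ref{thm:well_posed_feedback}, composed with the Lipschitz map $y_0\mapsto S(\mathbf{u}_p,y_0)$, and then \eqref{eq:V_loc_bound} from the resulting linear-in-$\|y_0\|_Y$ bounds on the state and the control. The only cosmetic difference is that you prove the $L^2$ bound on $\mathbf{U}_p(y_0)$ directly by pointwise monomial estimates, whereas the paper deduces it from the Lipschitz property together with $\mathbf{U}_p(0)=0$.
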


\begin{proof}
We begin by proving the well-posedness and the continuity of $\mathbf{U}_p$. We
actually prove that the mapping is Lipschitz continuous. Since
$\mathbf{U}_p(0)=0$, the Lipschitz continuity ensures also the well-posedness.
We set $\Omega= S(\mathbf{u}_p,B_Y(\delta_0)) \subset W_\infty$. By Theorem
\ref{thm:well_posed_feedback}, there exists $\delta > 0$ such that for all $y
\in \Omega$,
\begin{equation*}
\| y \|_{L^\infty(0,\infty;Y)} \leq \delta \quad \text{and} \quad
\| y \|_{L^2(0,\infty;Y)} \leq \delta.
\end{equation*}
For all $y_1$ and $y_2 \in B_Y(\delta)$,
\begin{align*}
& \big| \mathcal{T}_k(Ny_2 + B, y_2^{\otimes k-1}) - \mathcal{T}_k(Ny_1 +B,
y_1^{\otimes k-1}) \big| \\
& \qquad \leq \big| \mathcal{T}_k(N(y_2-y_1),y_2^{\otimes k-1}) \big|
+ \big| \mathcal{T}_k(Ny_1 +B,y_2^{\otimes k-1})-\mathcal{T}_k(Ny_1 +B, y_1^{
\otimes k-1}) \big| \\
& \qquad \leq \| \mathcal{T}_k \| \  \| N \|_{\mathcal{L}(V,Y)} \  \| y_2 -y_1
\|_V \  \delta^{k-1} \\
& \qquad \qquad + \| \mathcal{T}_k \| \big( \| N \|_{\mathcal{L}(V,Y)} \   \| y_1
\|_V + \| B \|_Y \big)
(k-1) \delta^{k-2} \| y_2-y_1 \|_Y.
\end{align*}
In the last inequality, we used Lemma \ref{lemma:continuousTensors1} and the
fact that
\begin{equation*}
\| \mathcal{T}_k(Ny_1 + B, \cdot,...,\cdot) \| \leq \| \mathcal{T}_k \| \
\big( \| N \|_{\mathcal{L}(V,Y)} \ \| y_1 \|_V + \| B \|_Y \big).
\end{equation*}
As a consequence, for all $y_1$ and $y_2 \in \Omega$,
\begin{align*}
& \big\| \mathcal{T}_k \big( Ny_2(\cdot) + B, y_2^{\otimes k-1}(\cdot) \big) -
\mathcal{T}_k \big( Ny_1(\cdot) + B, y_1^{\otimes  k-1}(\cdot) \big)
\big\|_{L^2(0,\infty)}^2 \\
& \qquad \leq C \big( \| y_2- y_1 \|_{L^2(0,\infty;V)}^2 + \| y_1
\|_{L^2(0,\infty;V)}^2 \| y_2 - y_1 \|_{L^\infty(0,\infty;Y)}^2 + \| y_2 - y_1
\|_{L^2(0,\infty;Y)}^2 \big) \\
& \qquad \leq C \| y_2-y_1 \|^2_{W_\infty}.
\end{align*}
It follows that the mapping: $y \in \Omega \mapsto \mathbf{u}_p(y(\cdot)) \in
W_\infty$ is
Lipschitz continuous. By composition with $y_0 \in B_Y(\delta_0) \mapsto
S(\mathbf{u}_p,y_0)$, the mapping $\mathbf{U}_p$ is Lipschitz continuous and
well-posed.

Let us prove inequality \eqref{eq:V_loc_bound}. Since $S(\mathbf{u}_p,\cdot)$
and $\mathbf{U}_p$ are both Lipschitz continuous, there exists $C>0$ such that
for all $y_0 \in B_Y(\delta_0)$,
\begin{equation*}
\| S(\mathbf{u}_p,y_0) \|_{L^2(0,\infty;Y)} \leq C \| y_0 \|_Y \quad \text{and}
\quad
\| \mathbf{U}_p(y_0) \|_{L^2(0,\infty;Y)} \leq C \| y_0 \|_Y.
\end{equation*}
It follows that
\begin{equation*}
\mathcal{V}(y_0) \leq \mathcal{J}(\mathbf{U}_p(y_0), y_0) \leq C^2(1+ \alpha)/2
\  \| y_0 \|_Y^2,
\end{equation*}
which concludes the proof.
\end{proof}

\section{Properties of the optimal control}
\label{sectionOptimalOpenLoopControl}

\begin{proposition} \label{prop:bound_opt_sol}
Let $\delta_0>0$ be given by Theorem \ref{thm:well_posed_feedback}. Then, for all
$y_0
\in B_Y(\delta_0)$, there exists a solution $u$ to problem
\eqref{eqProblem} with initial value $y_0$.
Moreover, $y:= S(u,y_0)$ lies in $L^2(0,\infty;V) \cap L^\infty(0,\infty; Y)$
and the following estimates hold:
\begin{equation} \label{eq:estimateForBarY}
\| y \|_{L^\infty(0,\infty;Y)} \leq C \| y_0 \|_{Y}
\quad \text{and} \quad
\| y \|_{L^2(0,\infty;V)} \leq C \| y_0 \|_{Y},
\end{equation}
where the constant $C$ is independent of $y_0$.
\end{proposition}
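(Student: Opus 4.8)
The plan is to assemble three results already established: the a priori bound on the cost of the feedback-generated control (Corollary \ref{cor:V_loc_bound}), the existence of an optimal control whenever a feasible one exists (Proposition \ref{prop:kk1}), and the regularity estimates of Lemma \ref{lemma:RegEstim}.

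First I would fix $y_0 \in B_Y(\delta_0)$ and invoke Corollary \ref{cor:V_loc_bound}: the open-loop control $\mathbf{U}_p(y_0)$ lies in $L^2(0,\infty)$ and satisfies $\mathcal{J}(\mathbf{U}_p(y_0),y_0) \leq C\|y_0\|_Y^2 < \infty$. Thus problem \eqref{eqProblem} with initial value $y_0$ possesses a feasible control, and Proposition \ref{prop:kk1} yields an optimal control $u$. By optimality, $\mathcal{J}(u,y_0) = \mathcal{V}(y_0) \leq \mathcal{J}(\mathbf{U}_p(y_0),y_0) \leq C\|y_0\|_Y^2$.

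Next, writing $y = S(u,y_0)$, the definition of $\mathcal{J}$ immediately gives $\|y\|_{L^2(0,\infty;Y)}^2 \leq 2\mathcal{J}(u,y_0) \leq C\|y_0\|_Y^2$ and $\|u\|_{L^2(0,\infty)}^2 \leq \frac{2}{\alpha}\mathcal{J}(u,y_0) \leq C\|y_0\|_Y^2$. In particular $y \in L^2(0,\infty;Y)$, so the estimates \eqref{eq:RegEstimBis1} and \eqref{eq:RegEstimBis2} of Lemma \ref{lemma:RegEstim} apply. Since $\|u\|_{L^2(0,\infty)}^2 \leq C\delta_0^2$, the exponential factor in \eqref{eq:RegEstimBis1} is bounded by a constant depending only on $\delta_0$; combined with the two $\mathcal{O}(\|y_0\|_Y^2)$ bounds above, \eqref{eq:RegEstimBis1} gives $\|y\|_{L^\infty(0,\infty;Y)}^2 \leq C\|y_0\|_Y^2$. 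Inserting this bound, together with $\|u\|_{L^2(0,\infty)}^2 \leq C\delta_0^2$ and $\|y\|_{L^2(0,\infty;Y)}^2 \leq C\|y_0\|_Y^2$, into \eqref{eq:RegEstimBis2} gives $\|y\|_{L^2(0,\infty;V)}^2 \leq C\|y_0\|_Y^2$. Hence $y \in L^2(0,\infty;V) \cap L^\infty(0,\infty;Y)$ with the claimed estimates.

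I do not expect a genuine obstacle here: the statement is essentially a bookkeeping exercise linking Corollary \ref{cor:V_loc_bound}, Proposition \ref{prop:kk1}, and Lemma \ref{lemma:RegEstim}. The only point requiring some care is to keep every constant independent of $y_0$; this is possible precisely because $\|y_0\|_Y \leq \delta_0$, so the exponential factors and the higher powers of $\|y_0\|_Y$ occurring in the estimates of Lemma \ref{lemma:RegEstim} can all be absorbed into a single constant.
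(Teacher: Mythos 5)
Your proposal is correct and follows essentially the same route as the paper: Corollary \ref{cor:V_loc_bound} gives feasibility and the bound $\mathcal{V}(y_0)\leq C\|y_0\|_Y^2$, Proposition \ref{prop:kk1} gives existence of an optimal $u$, the structure of $\mathcal{J}$ yields the $L^2$ bounds on $u$ and $y$, and estimates \eqref{eq:RegEstimBis1}--\eqref{eq:RegEstimBis2} then give \eqref{eq:estimateForBarY}. You in fact spell out the control of the exponential factors slightly more explicitly than the paper does.
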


\begin{proof}
By Corollary \ref{cor:V_loc_bound}, we have $\mathcal{V}(y_0) \leq C \| y_0
\|_Y^2 \leq C \delta_0^2$.
Hence, Proposition \ref{prop:kk1} guarantees the existence of a
solution $u$ to problem \eqref{eqProblem}, with initial condition $y_0$. Let $y= S(u,y_0)$.
We deduce from $\mathcal{V}(y_0)= \| y \|_{L^2(0,\infty;Y)}^2 + \frac{\alpha}{2} \| u \|_{L^2(0,\infty)}^2$ that
\begin{equation} \label{eq:ExistenceSol3}
\| u \|_{L^2(0,\infty)}^2
\leq \frac{2}{\alpha} C \| y_0 \|^2 \leq \frac{2C \delta_0^2}{\alpha}
\quad \text{and} \quad
\| y \|_{L^2(0,\infty;Y)}^2
\leq C \| y_0 \|_Y^2
\leq 2C \delta_0^2.
\end{equation}
Estimate \eqref{eq:estimateForBarY} follows then from
\eqref{eq:RegEstimBis1}, \eqref{eq:RegEstimBis2}, and \eqref{eq:ExistenceSol3}.
\end{proof}

\begin{proposition} \label{prop:continuityValueFunction}
The value function $\mathcal{V}$ is continuous on $B_Y(\delta_0)$,  with
$\delta_0 > 0$ given by Theorem \ref{thm:well_posed_feedback}.
\end{proposition}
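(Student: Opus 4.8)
The plan is to show that $\mathcal{V}$ is both upper and lower semicontinuous on $B_Y(\delta_0)$. For upper semicontinuity, the natural candidate is the feedback law $\mathbf{u}_p$: for every $y_0 \in B_Y(\delta_0)$ we have from Corollary \ref{cor:V_loc_bound} the bound $\mathcal{V}(y_0) \leq \mathcal{J}(\mathbf{U}_p(y_0),y_0)$, and the right-hand side is continuous in $y_0$. Indeed, $\mathbf{U}_p$ is Lipschitz continuous from $B_Y(\delta_0)$ into $L^2(0,\infty)$ by Corollary \ref{cor:V_loc_bound}, and $S(\mathbf{u}_p,\cdot)$ is Lipschitz from $B_Y(\delta_0)$ into $W_\infty$ by Theorem \ref{thm:well_posed_feedback}, hence into $L^2(0,\infty;Y)$; since $\mathcal{J}(\mathbf{U}_p(y_0),y_0) = \tfrac12 \| S(\mathbf{u}_p,y_0) \|_{L^2(0,\infty;Y)}^2 + \tfrac{\alpha}{2}\| \mathbf{U}_p(y_0) \|_{L^2(0,\infty)}^2$, the map $y_0 \mapsto \mathcal{J}(\mathbf{U}_p(y_0),y_0)$ is continuous (composition of Lipschitz maps into Hilbert spaces with the squared-norm functional). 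This gives $\limsup_{y_0 \to \bar{y}_0} \mathcal{V}(y_0) \leq \mathcal{J}(\mathbf{U}_p(\bar{y}_0),\bar{y}_0)$; applying the same reasoning starting from $\bar y_0$ and any sequence converging to it, together with the fact that $\mathcal{V}(\bar y_0) \le \mathcal{J}(\mathbf{U}_p(\bar y_0),\bar y_0)$ and the lower bound obtained next, will close the argument. Actually it is cleaner to prove continuity directly by combining the two one-sided estimates below.

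For lower semicontinuity, I would argue by contradiction or directly with sequences. Fix $\bar y_0 \in B_Y(\delta_0)$ and a sequence $y_{0,n} \to \bar y_0$ in $Y$. By Proposition \ref{prop:bound_opt_sol}, for each $n$ there is an optimal control $u_n$ with state $y_n = S(u_n, y_{0,n})$ satisfying $\| u_n \|_{L^2(0,\infty)}^2 \leq \tfrac{2}{\alpha}\mathcal{V}(y_{0,n})$ and the uniform bounds \eqref{eq:estimateForBarY}. Using \eqref{eq:RegEstim2} of Lemma \ref{lemma:RegEstim}, the state $\tilde y_n := S(u_n, \bar y_0)$ generated by the \emph{same} control $u_n$ but with initial datum $\bar y_0$ satisfies, on any finite $[0,T]$,
\begin{equation*}
\| \tilde y_n - y_n \|_{L^\infty(0,T;Y)}^2 \leq \| \bar y_0 - y_{0,n} \|_Y^2 \, e^{C(T + \| u_n \|_{L^2(0,T)})},
\end{equation*}
and since $\| u_n \|_{L^2(0,\infty)}$ is uniformly bounded, the right-hand side tends to $0$ as $n \to \infty$ for each fixed $T$. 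Combining this with the uniform tail bounds coming from \eqref{eq:RegEstimBis1}–\eqref{eq:RegEstimBis2} (the states $\tilde y_n$ and $y_n$ both lie in $L^2(0,\infty;V)\cap L^\infty(0,\infty;Y)$ with norms controlled by $\| u_n \|_{L^2}$ and the initial data), one obtains $\| \tilde y_n - y_n \|_{L^2(0,\infty;Y)} \to 0$. Hence
\begin{equation*}
\mathcal{V}(\bar y_0) \leq \mathcal{J}(u_n, \bar y_0) = \tfrac12 \| \tilde y_n \|_{L^2(0,\infty;Y)}^2 + \tfrac{\alpha}{2}\| u_n \|_{L^2(0,\infty)}^2 = \mathcal{J}(u_n, y_{0,n}) + o(1) = \mathcal{V}(y_{0,n}) + o(1),
\end{equation*}
which yields $\mathcal{V}(\bar y_0) \leq \liminf_n \mathcal{V}(y_{0,n})$, i.e.\ lower semicontinuity. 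By symmetry (swapping the roles of $\bar y_0$ and $y_{0,n}$, using the optimal control for $\bar y_0$ transplanted to initial datum $y_{0,n}$) one gets the reverse inequality $\limsup_n \mathcal{V}(y_{0,n}) \leq \mathcal{V}(\bar y_0)$, and continuity follows.

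The main obstacle is controlling the difference $\| \tilde y_n - y_n \|_{L^2(0,\infty;Y)}$ on the \emph{infinite} horizon: estimate \eqref{eq:RegEstim2} only gives control on finite intervals $[0,T]$ with a constant growing in $T$. The remedy is to split $\int_0^\infty = \int_0^T + \int_T^\infty$: on $[0,T]$ use \eqref{eq:RegEstim2} and let $n\to\infty$ first; on $[T,\infty)$ use the uniform-in-$n$ bounds on $\| y_n \|_{L^2(0,\infty;Y)}$ and $\| \tilde y_n \|_{L^2(0,\infty;Y)}$ (from \eqref{eq:estimateForBarY}-type estimates, valid since the controls are uniformly small and the initial data bounded) to make the tail small uniformly in $n$, then let $T\to\infty$. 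A secondary point is that one must ensure $\bar y_0$ and the $y_{0,n}$ all lie in $B_Y(\delta_0)$ so that Proposition \ref{prop:bound_opt_sol} applies; this is automatic for $n$ large since $B_Y(\delta_0)$ is a neighborhood of any of its interior points, or one restricts to a slightly smaller ball.
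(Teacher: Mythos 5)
There is a genuine gap in the lower-semicontinuity step, which is the heart of your argument. You transplant the optimal control $u_n$ for $y_{0,n}$ onto the initial datum $\bar y_0$ over the \emph{infinite} horizon and write $\mathcal{V}(\bar y_0)\le \mathcal{J}(u_n,\bar y_0)$. But the difference $\delta_n:=\tilde y_n-y_n$ solves $\frac{\dd}{\dd t}\delta_n=A\delta_n+N\delta_n u_n$, and the only available bound is \eqref{eq:RegEstim2}, whose right-hand side grows like $e^{C(T+\|u_n\|_{L^2(0,T)})}$. Since $A$ is only assumed stabilizable, not stable, $\delta_n$ may grow exponentially, so $\tilde y_n$ need not belong to $L^2(0,\infty;Y)$ at all and $\mathcal{J}(u_n,\bar y_0)$ may be $+\infty$. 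Your claim that $\tilde y_n$ satisfies \eqref{eq:estimateForBarY}-type bounds is unjustified: Proposition \ref{prop:bound_opt_sol} gives those bounds only for \emph{optimal} trajectories (whose $L^2(0,\infty;Y)$-norm is controlled by the value function), and \eqref{eq:RegEstimBis1}--\eqref{eq:RegEstimBis2} are conditional on already knowing $y\in L^2(0,\infty;Y)$. Moreover, even granting that all the trajectories lie in $L^2(0,\infty;Y)$ with a uniform bound, a uniform bound on $\|y_n\|_{L^2(0,\infty;Y)}$ does \emph{not} make $\int_T^\infty\|y_n\|_Y^2\dd t$ small uniformly in $n$ (no uniform integrability of the tails), so the splitting $\int_0^T+\int_T^\infty$ that you propose as the remedy does not close.

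The paper's proof avoids both problems by never transplanting a control over an infinite horizon. It uses the dynamic programming principle \eqref{eq2.3}: since $\mathcal{V}(y_0)\le C\delta_0^2$ on $B_Y(\delta_0)$ by Corollary \ref{cor:V_loc_bound}, every optimal trajectory must enter the ball $B_Y(\varepsilon_3)$ at some time $\tau\le T:=C\delta_0^2/\varepsilon_3^2$ (otherwise the running cost alone would exceed $C\delta_0^2$). One then compares the two trajectories only on the \emph{finite} interval $[0,\tau]$, where \eqref{eq:RegEstim2} applies with a constant depending on the fixed $T$, and bounds the remaining cost by $\mathcal{V}(y_1)\le C\|y_1\|_Y^2\le C(2\varepsilon_3)^2$, where $y_1$ is the transplanted state at time $\tau$, close to the small state $\tilde y(\tau)$. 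If you want to rescue your scheme, this finite-horizon truncation via the DPP, together with the quadratic upper bound on $\mathcal{V}$ near $0$, is exactly the missing ingredient.
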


\begin{proof}
Let $\varepsilon_2 > 0$. We construct $\varepsilon_1 > 0$ in such a way that for
all $\hat{y}_0 \in B_Y(\delta_0)$ and $\tilde{y}_0 \in B_Y(\delta_0)$,
\begin{equation} \label{eq:Continuity}
\| \tilde{y}_0-\hat{y}_0 \|_Y \leq \varepsilon_1 \Longrightarrow
| \mathcal{V}(\tilde{y}_0)-\mathcal{V}(\hat{y}_0) | \leq \varepsilon_2.
\end{equation}
Before defining $\varepsilon_1$, we need to introduce some constants.
By Corollary \ref{cor:V_loc_bound}, there exists a constant $C>0$ such that for
all $y_0 \in B_Y(\delta_0)$, $\mathcal{V}(y_0) \leq C \| y_0 \|_Y^2 \leq C
\delta_0^2$.
We set
\begin{equation*}
\varepsilon_3= \frac{1}{2} \min \Big[  \delta_0, \Big( \frac{\varepsilon_2}{2C}
\Big)^{1/2} \, \Big] \quad
\text{and} \quad
T= \frac{C \delta_0^2}{\varepsilon_3^2}.
\end{equation*}
The constant $T$ is defined in such a way that
for each  solution $u$ to \eqref{eqProblem} with initial value $y_0 \in
B_Y(\delta_0)$, there exists
$\tau \in [0,T]$ such that $\| S(u,y_0;\tau) \|_Y \leq \varepsilon_3$. Indeed,
if it was not the case, one would have
\begin{equation*}
\mathcal{V}(y_0) > \int_0^{T} \| S(u,y_0;t) \|_Y^2 \dd t \geq T \varepsilon_3^2=
C \delta_0^2,
\end{equation*}
in contradiction with Corollary \ref{cor:V_loc_bound}.
For all $y_0 \in B_Y(\delta_0)$, it holds: $\mathcal{V}(y_0) \leq C \delta_0^2$,
and therefore, if $u$ is an optimal solution to \eqref{eqProblem} with initial
value $y_0$, then
\begin{equation}\label{eq:aux1}
\| u \|_{L^2(0,\infty)}^2 \leq \frac{2}{\alpha} C \delta_0^2.
\end{equation}
By Lemma \ref{lemma:RegEstim}, there exist $M$ and $L>0$ such that for all $u
\in L^2(0,T)$ with $\| u \|_{L^2(0,T)}^2 \leq 2C\delta^2/\alpha$, for all $y_0$
and $\tilde{y}_0 \in B_Y(\delta_0)$,
\begin{equation}\label{eq:aux2}
\| S(u,y_0) \|_{L^\infty(0,T;Y)} \leq M \quad \text{and} \quad
\| S(u,\tilde{y}_0)-S(u,y_0) \|_{L^\infty(0,T;Y)} \leq L \| \tilde{y}_0 - y_0
\|_Y.
\end{equation}
We finally define
\begin{equation*}
\varepsilon_1= \min \Big( \frac{\varepsilon_2}{4TML}, \frac{\varepsilon_3}{L}
\Big).
\end{equation*}
We are ready to prove \eqref{eq:Continuity}. Let $\tilde{y}_0$ and $\hat{y}_0
\in B_Y(\delta_0)$ be such that $\| \tilde{y}_0-\hat{y}_0 \|_Y \leq
\varepsilon_1$. Let $\tilde{u}$ and $\hat{u}$ be associated optimal solutions,
and let $\tilde{y}$ and $\hat{y}$ be the associated trajectories.
Take $\tau \in [0,T]$ such that $\| \tilde{y}(\tau) \|_Y \leq \varepsilon_3$.
By \eqref{eq:aux1} and \eqref{eq:aux2}, we have
\begin{equation*}
\| S(\tilde{u},\hat{y}_0) - S(\tilde{u},\tilde{y}_0) \|_{L^\infty(0,T;Y)} \leq L
\| \hat{y}_0-\tilde{y}_0 \|_Y \leq L \varepsilon_1.
\end{equation*}
We set $y_1= S(\tilde{u},\hat{y}_0;\tau)$. It holds that
\begin{equation*}
\| y_1 \|_Y
\leq \|  S(\tilde{u},\tilde{y}_0) \|_Y +  \| S(\tilde{u},\hat{y}_0) -
S(\tilde{u},\tilde{y}_0) \|_Y \leq \underbrace{\| \tilde{y}(\tau) \|_Y}_{\leq \varepsilon_3} + L \varepsilon_1
\leq 2 \varepsilon_3.
\end{equation*}
Therefore, using the definition of $\varepsilon_3$, we obtain that $\| y_1 \|_Y
\leq \delta_0$ and thus that
\begin{equation} \label{eq:Continuity22}
\mathcal{V}(y_1) \leq C (2 \varepsilon_3)^2 \leq \varepsilon_2/2.
\end{equation}
By the dynamic programming principle, see \eqref{eq2.3}, we have
\begin{equation} \label{eq:Continuity1}
\mathcal{V}(\hat{y}_0) \leq \int_0^{\tau} \ell \big( S(\tilde{u},\hat{y}_0;t),\tilde{u}(t) \big) \dd t + \mathcal{V}(
\underbrace{S(\tilde{u},y_0;\tau)}_{=y_1})
\leq \int_0^{\tau} \ell \big( S(\tilde{u},\hat{y}_0;t),\tilde{u}(t) \big) \dd t + \varepsilon_2/2.
\end{equation}
We find now an upper estimate on the integral of the r.h.s.\@ in the above inequality.
We have
\begin{align}
\int_0^{\tau} \ell \big( S(u,\hat{y}_0;t),u(t) \big) & \dd t
=  \int_0^{\tau} \ell \big( S(\tilde{u},\hat{y}_0,;t),\tilde{u}(t) \big) \dd t
\notag \\
\leq \ & \frac{1}{2} \int_0^{\tau} \| S(\tilde{u},\tilde{y}_0;t) \|_Y^2 + \alpha
|\tilde{u}(t)|^2
+ \| S(\tilde{u},\hat{y}_0;t) \|_Y^2- \| S(\tilde{u},\tilde{y}_0;t) \|_Y^2
\notag \\
\leq \ & \mathcal{J}(\tilde{u},\tilde{y}_0) + \frac{1}{2} \int_0^{\tau} \langle
S(\tilde{u},\hat{y}_0;t)-S(\tilde{u},\tilde{y}_0;t), S(\tilde{u},\hat{y}_0;t) +
S(\tilde{u},\tilde{y}_0;t) \rangle_Y \dd t \notag \\
\leq \ & \mathcal{V}(\tilde{y}_0) + \frac{1}{2} T L \varepsilon_1 \ 2M
\leq \  \mathcal{V}(\tilde{y}_0) + \varepsilon_2/2. \label{eq:Continuity2}
\end{align}
Combining \eqref{eq:Continuity1} and \eqref{eq:Continuity2}, we obtain that $\mathcal{V}(\hat{y}_0) \leq
\mathcal{V}(\tilde{y}_0) + \varepsilon_2$. One can similarly prove that
$\mathcal{V}(\tilde{y}_0) \leq \mathcal{V}(\hat{y}_0) + \varepsilon_2$, by
exchanging the symbols ``$\ \hat{} \ $" and ``$\ \tilde{} \ $" in the above
proof. Therefore, \eqref{eq:Continuity} holds and the continuity of
$\mathcal{V}$ is demonstrated.
\end{proof}

\section{Error estimate for the polynomial approximation}
\label{sectionTaylorExpandability}

In this section, we prove the two main results of the article.
In Theorem \ref{thm:subOptimality}, we give an estimate for the
quality of the feedback law $\mathbf{u}_p$ for $\| y_0 \|_{Y}$ small
enough. This will be based on the fact that $\mathcal{V}_p$ provides a Taylor
approximation of $\mathcal{V}$ of order $p+1$  in a neighborhood in $Y$ of 0.
In Theorem \ref{thm:errorEstimForUp}, we give an estimate for $\| \bar{u} - \mathbf{U}_p(y_0) \|_{L^2(0,\infty)}$, where $\bar{u}$ is a solution to problem \eqref{eqProblem} with initial condition $y_0$, with $y_0$ small enough, and where $\mathbf{U}_p(y_0)$ is the open-loop control associated with the feedback law $\mathbf{u}_p$ and the initial condition $y_0$ (see the definition given by \eqref{eqDefClosedLoop}).

Our analysis consists first in defining a perturbed cost function
$\mathcal{J}_p$ which has the property that $\mathcal{V}_p$ is its minimal value
functional over a set of controls specified below.
This is achieved by constructing a remainder term $r_p$,
defined for $p \geq 2$ and $y \in V$ by
\begin{equation} \label{eq:poly_val_func}
r_p(y)= \frac{1}{2\alpha} \sum_{i=p+1}^{2p} \sum_{j=i-p}^p
q_{p,j}(y)q_{p,i-j}(y),
\quad \text{where: }
\begin{cases}
\begin{array}{l}
q_{p,1}(y)= \mathcal{C}_1(y), \\
q_{p,i}(y)= \frac{1}{i!} \big( \mathcal{C}_i(y^{\otimes i}) + i
\mathcal{G}_i(y^{\otimes i}) \big), \\\qquad \qquad \qquad \qquad \text{$\forall
i=2,...,p-1$,} \\
q_{p,p}(y)= \frac{1}{(p-1)!} \mathcal{G}_p(y^{\otimes p}).
\end{array}
\end{cases}
\end{equation}
We recall that the definitions of $\mathcal{C}_i$ and $\mathcal{G}_i$ are given by \eqref{eqLyapounov3c}.
The perturbed cost function $\mathcal{J}_p$ is defined by
\begin{equation*}
\mathcal{J}_p(u,y_0):= \frac{1}{2} \int_0^\infty \| S(u,y_0;t) \|_Y^2 \dd t +
\frac{\alpha}{2} \int_0^\infty u^2(t) \dd t + \int_0^\infty r_p \big( S(u,y_0;t)
\big) \dd t.
\end{equation*}
The well-posedness of $\mathcal{J}_p$, for a certain class of controls, will be investigated in Lemma \ref{lemma:optimalityJp}. Note that $r_p$ is not necessarily non-negative.

\begin{proposition}\label{prop:resi_hjb}
For all $p \geq 2$ and all $y \in \mathcal{D}(A)$, we have
\begin{equation} \label{eq:resi_hjb1}
r_p(y)= - D\mathcal{V}_p(y)(Ay) -
\frac{1}{2} \| y \|_Y^2 + \frac{1}{2\alpha }
\big( D\mathcal{V}_p(y) (Ny + B)
\big)^2.
\end{equation}
Moreover, for all $p \geq 2$, there exists a constant $C>0$ such that for all $y
\in V$,
\begin{equation} \label{eq:resi_hjb2}
|r_p(y)| \leq C \sum_{i=p+1}^{2p} \| y \|_V^2 \, \| y \|_{Y}^{i-2}.
\end{equation}
\end{proposition}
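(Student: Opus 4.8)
The plan is to establish \eqref{eq:resi_hjb1} by computing the two scalars $D\mathcal{V}_p(y)(Ny+B)$ and $D\mathcal{V}_p(y)(Ay)$ explicitly in terms of the quantities $q_{p,i}(y)$ from \eqref{eq:poly_val_func}, and then to deduce \eqref{eq:resi_hjb2} from the monomial structure of $r_p$. Throughout, by \eqref{eq:kk20}, Lemma \ref{lemma:continuousTensors2} and the symmetry of the $\mathcal{T}_k$, we have $D\mathcal{V}_p(y)z = \sum_{k=2}^p \frac{1}{(k-1)!}\mathcal{T}_k(z,y^{\otimes k-1})$. Applying this to $z = Ny+B$, splitting the argument, and using again symmetry together with the definitions \eqref{eqLyapounov3c} (namely $\mathcal{T}_k(Ny,y^{\otimes k-1}) = \mathcal{G}_k(y^{\otimes k})$ and $\mathcal{T}_k(B,y^{\otimes k-1}) = \mathcal{C}_{k-1}(y^{\otimes k-1})$), a reindexing of the resulting sums and a comparison with \eqref{eq:poly_val_func} give the identity $D\mathcal{V}_p(y)(Ny+B) = \sum_{i=1}^p q_{p,i}(y)$.

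For $D\mathcal{V}_p(y)(Ay)$, the idea is to write $A = A_\Pi + \tfrac1\alpha BB^*\Pi$ and, noting $\mathcal{C}_1(y) = B^*\Pi y$, to decompose $\mathcal{T}_k(Ay,y^{\otimes k-1}) = \mathcal{T}_k(A_\Pi y,y^{\otimes k-1}) + \tfrac1\alpha \mathcal{C}_1(y)\,\mathcal{C}_{k-1}(y^{\otimes k-1})$ for each $k \geq 2$. This is the point where the hypothesis $y \in \mathcal{D}(A)$ enters: for $k = 2$, evaluating the algebraic Riccati equation \eqref{eq:algebraicRiccati} at $z_1 = z_2 = y$ (rewritten in terms of $A_\Pi$) gives $\mathcal{T}_2(A_\Pi y,y) = -\tfrac12\|y\|_Y^2 - \tfrac1{2\alpha}\mathcal{C}_1(y)^2$, while for $k \geq 3$, evaluating the generalized Lyapunov equation \eqref{eqLyapounov3a} at $z_1 = \dots = z_k = y$ and using symmetry gives $k\,\mathcal{T}_k(A_\Pi y,y^{\otimes k-1}) = \tfrac1{2\alpha}\mathcal{R}_k(y^{\otimes k})$. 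By Lemma \ref{lemma:symmetry}, the diagonal value $\mathcal{R}_k(y^{\otimes k})$ equals $2k(k-1)\mathcal{C}_1(y)\mathcal{G}_{k-1}(y^{\otimes k-1}) + \sum_{i=2}^{k-2}\binom{k}{i}\,d_i(y)\,d_{k-i}(y)$, where $d_i(y) := \mathcal{C}_i(y^{\otimes i}) + i\,\mathcal{G}_i(y^{\otimes i})$.

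Substituting these into $D\mathcal{V}_p(y)(Ay) = \sum_{k=2}^p \tfrac1{(k-1)!}\mathcal{T}_k(Ay,y^{\otimes k-1})$, using $\tfrac1{k!}\binom{k}{i} = \tfrac1{i!(k-i)!}$ together with the identifications $q_{p,1} = \mathcal{C}_1$, $q_{p,i}(y) = \tfrac1{i!}d_i(y)$ for $2 \leq i \leq p-1$, $q_{p,p}(y) = \tfrac1{(p-1)!}\mathcal{G}_p(y^{\otimes p})$, and carefully reindexing the double sums over $k$ and $i$ as a single sum over pairs with index-sum at most $p$, one should obtain
\begin{equation*}
D\mathcal{V}_p(y)(Ay) + \tfrac12\|y\|_Y^2 = \frac1{2\alpha}\sum_{\substack{1\leq i,j\leq p\\ i+j\leq p}} q_{p,i}(y)\,q_{p,j}(y).
\end{equation*}
Finally, $\big(\sum_{i=1}^p q_{p,i}(y)\big)^2 = \sum_{1\leq i,j\leq p} q_{p,i}(y) q_{p,j}(y)$ splits into the part with $i+j\leq p$ and the part with $i+j\geq p+1$, and the latter equals $2\alpha\,r_p(y)$ by \eqref{eq:poly_val_func}; combining this with the two displayed identities yields \eqref{eq:resi_hjb1}. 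For \eqref{eq:resi_hjb2}, I would use that the $\mathcal{T}_k$ are bounded multilinear forms and $B \in Y$ to get $|\mathcal{C}_j(y^{\otimes j})| \leq C\|y\|_Y^j$, that $N \in \mathcal{L}(V,Y)$ to get $|\mathcal{G}_j(y^{\otimes j})| \leq C\|y\|_V\|y\|_Y^{j-1}$, and the embedding $V \hookrightarrow Y$ to conclude $|q_{p,j}(y)| \leq C\|y\|_V\|y\|_Y^{j-1}$ for $1 \leq j \leq p$ and all $y \in V$; inserting these into $r_p(y) = \tfrac1{2\alpha}\sum_{i=p+1}^{2p}\sum_{j=i-p}^p q_{p,j}(y)q_{p,i-j}(y)$, where the two indices of each product sum to $i$, gives $|r_p(y)| \leq C\sum_{i=p+1}^{2p}\|y\|_V^2\|y\|_Y^{i-2}$.

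The main obstacle is the bookkeeping hidden in the second displayed identity: one must track the index ranges precisely, treat the $k=2$ (Riccati) contribution and the boundary indices $i=1$ (where the $\mathcal{G}_1$-term is absent) and $i=p$ (where the $\mathcal{C}_p$-term is absent) separately, and verify that after all reindexings the remainder collapses exactly to $\tfrac1{2\alpha}\sum_{i+j\leq p}q_{p,i}q_{p,j}$. This is a finite algebraic manipulation, but an error-prone one; everything else (the identity of the first paragraph and the estimate \eqref{eq:resi_hjb2}) is routine.
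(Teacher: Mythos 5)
Your proposal is correct and follows essentially the same route as the paper's proof: both rest on the diagonal identities $\mathcal{C}_i(y^{\otimes i})=\mathcal{T}_{i+1}(B,y^{\otimes i})$, $\mathcal{G}_i(y^{\otimes i})=\mathcal{T}_i(Ny,y^{\otimes i-1})$, the formula $D\mathcal{V}_p(y)(Ny+B)=\sum_{i=1}^p q_{p,i}(y)$, the Riccati equation at degree $2$ and the relation $k\,\mathcal{T}_k(A_\Pi y,y^{\otimes k-1})=\tfrac{1}{2\alpha}\mathcal{R}_k(y^{\otimes k})$ at degrees $3,\dots,p$, and the observation that the part of the squared sum of total degree $\geq p+1$ is exactly $2\alpha r_p(y)$; your intermediate identity $D\mathcal{V}_p(y)(Ay)+\tfrac12\|y\|_Y^2=\tfrac{1}{2\alpha}\sum_{i+j\leq p}q_{p,i}(y)q_{p,j}(y)$ is correct and is just the paper's final cancellation read in the other direction (the paper expands the square into homogeneous pieces $\tilde q_{p,k}$ and cancels against $D\mathcal{V}_p(y)(Ay)$ at the end, whereas you substitute the Riccati/Lyapunov identities into $D\mathcal{V}_p(y)(Ay)$ first). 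The estimate \eqref{eq:resi_hjb2} is handled identically in both arguments via $|q_{p,j}(y)|\leq C\|y\|_V\|y\|_Y^{j-1}$.
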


\begin{remark} \label{rem:HJBforVp}
{\em Relation \eqref{eq:resi_hjb1} states that $\mathcal{V}_p$ is a
solution to the HJB equation associated with the problem of minimizing
$\mathcal{J}_p$ (compare with Proposition \eqref{prop:HJB}). This relation is the key tool to
establish Lemma
\ref{lemma:optimalityJp}.
}
\end{remark}

\begin{proof}[Proof of Proposition \ref{prop:resi_hjb}]
Let us prove \eqref{eq:resi_hjb1}. Let us fix $y \in \mathcal{D}(A)$.
For $p=2$, using that the operator $\Pi$ generating $\mathcal{T}_2$ is the
solution of the algebraic
Riccati equation, we obtain
\begin{align*}
& -  D\mathcal{V}_p(y)(Ay) -
\frac{1}{2} \| y \|_Y^2 + \frac{1}{2\alpha }
\left( D\mathcal{V}_p(y) (Ny + B)
\right)^2 \\
& \qquad = -\mathcal{T}_2(Ay,y)  - \frac{1}{2}\| y \|_Y^2  +
\frac{1}{2\alpha}(\mathcal{T}_2(Ny+B,
y))^2 \\
& \qquad = \underbrace{-\mathcal{T}_2(Ay,y) - \frac{1}{2} \| y \|_Y^2 +
\frac{1}{2\alpha} \mathcal{T}_2(B,y)^2 }_{= 0} +
\frac{1}{2\alpha}
\big[
2\mathcal{T}_2(B,y) \mathcal{T}_2(Ny,y)
+
\mathcal{T}_2(Ny,y)^2 \Big] \\
& \qquad = \frac{1}{2\alpha} \Big[ 2 q_{2,1}(y)q_{2,2}(y) + q_{2,2}(y)^2 \Big]
=  \frac{1}{2\alpha} r_2(y).
\end{align*}

Now let $p \geq 3$. Our proof is based on Theorem \ref{thm:mult_lin_form}. The expressions of the multilinear forms $\mathcal{C}_i$, $\mathcal{G}_i$, and $\mathcal{R}_k$ can be simplified when the mappings are evaluated at $y^{\otimes i}$ and $y^{\otimes k}$, respectively.
By definition of $\mathcal{C}_i$ and $\mathcal{G}_i$ (see \eqref{eqLyapounov3c}) and using the symmetry of the multilinear forms $\mathcal{T}_i$ (proved in Theorem \ref{thm:mult_lin_form}),
\begin{equation} \label{eq:rpProof1}
\mathcal{C}_i(y^{\otimes i})= \mathcal{T}_{i+1}(B,y^{\otimes i}) \quad \text{and} \quad
\mathcal{G}_i(y^{\otimes i})= \mathcal{T}_i(Ny, y^{\otimes i-1}).
\end{equation}
Moreover, by definition of $\mathcal{R}_k$ (see \eqref{eqLyapounov3b}) and by Lemma \ref{lemma:symmetry}, we have
\begin{align}
\mathcal{R}_k(y^{\otimes k})
= \ & 2k(k-1) \mathcal{C}_1(y)\mathcal{G}_{k-1}(y^{\otimes k-1}) \notag \\
& \quad + \sum_{i=2}^{k-2} \begin{pmatrix} k \\ i \end{pmatrix}
\big( \mathcal{C}_i(y^{\otimes i}) + i \mathcal{G}_i(y^{\otimes i}) \big)
\big( \mathcal{C}_{k-i}(y^{\otimes k-i}) + (k-i) \mathcal{G}_{k-i}(y^{\otimes k-i}) \big).
\label{eq:rpProof2}
\end{align}
Using once again the symmetry of the multilinear forms $\mathcal{T}_k$, we obtain
\begin{equation} \label{eq:rpProof3}
k \mathcal{T}_k(A_{\Pi} y, y^{\otimes k-1})= \frac{1}{2\alpha} \mathcal{R}_k(y^{\otimes k}).
\end{equation}
We are now ready to prove \eqref{eq:resi_hjb1}.
We first have
\begin{equation} \label{eqRPExpansion1}
D\mathcal{V}_p(y) (Ay) = \sum_{k=2}^p \frac{1}{(k-1)!} \mathcal{T}_k \big( Ay,
y^{\otimes k-1} \big).
\end{equation}
Moreover, by \eqref{eq:rpProof1},
\begin{align*}
D\mathcal{V}_p(y)(Ny+B)
= & \ \sum_{i=2}^p \frac{1}{(i-1)!} \mathcal{T}_i(Ny + B, y^{\otimes i-1}) \\
= & \ \sum_{i=2}^p \frac{1}{(i-1)!} \big( \mathcal{G}_i(y^{\otimes i}) +
\mathcal{C}_{i-1}(y^{\otimes i-1}) \big) \\
= & \ \mathcal{C}_1(y) + \sum_{i=2}^{p-1} \frac{1}{i!} \big(
\mathcal{C}_i(y^{\otimes i}) + i \mathcal{G}_i(y^{\otimes i}) \big) +
\frac{1}{(p-1)!} \mathcal{G}_p(y^{\otimes p}) \\
= & \ \sum_{i=1}^{p} q_{p,i}(y).
\end{align*}
The expression $D\mathcal{V}_p(y)(Ny+B)$ is therefore the sum of monomial
functions of degree $1$,...,$p$. As a consequence, $\big(
D\mathcal{V}_p(y)(Ny+B) \big)^2$ can be expressed as a sum of monomial functions
$\tilde{q}_{p,2}$, $\tilde{q}_{p,3}$,...,$\tilde{q}_{p,2p}$ of degree
$2$,...,$2p$,
respectively:
\begin{equation} \label{eqRPExpansion0}
\big( D\mathcal{V}_p(y)(Ny+B) \big)^2 = \sum_{k=2}^{2p} \tilde{q}_{p,k}(y).
\end{equation}
We compute now these monomial functions. First,
\begin{equation*}
\tilde{q}_{p,2}(y)= q_{p,1}(y)^2= \mathcal{C}_1(y)^2= \mathcal{T}_2(B,y)^2=
\langle \Pi
B, y \rangle^2.
\end{equation*}
For $3 \leq k \leq p$, we obtain
\begin{equation} \label{eq:aux3}
\tilde{q}_{p,k}(y)= \underbrace{2 q_{p,1}(y) q_{p,k-1}(y)}_{=:(a)} +
\underbrace{\sum_{i=2}^{p-2} q_{p,i}(y) q_{p,k-i}(y)}_{=:(b)}.
\end{equation}
The terms $(a)$ and $(b)$ can be expressed explicitly as follows:
\begin{align*}
(a)= \ & \frac{2}{(k-1)!} \mathcal{C}_1(y) \big( \mathcal{C}_{k-1}(y^{\otimes
k-1}) + (k-1) \mathcal{G}_{k-1}(y^{\otimes k-1}) \big) \\
 & \qquad = \frac{2k}{k!} \mathcal{T}_k(BB^* \Pi y, y^{\otimes k-1})
+ \frac{2k(k-1)}{k!} \mathcal{C}_1(y) \mathcal{G}_{k-1}(y^{\otimes k-1}), \\
(b)= \ & \frac{1}{k!} \sum_{i=2}^{k-2} \begin{pmatrix} k \\ i \end{pmatrix}
\big( \mathcal{C}_i(y^{\otimes i}) + i \mathcal{G}_i(y^{\otimes i}) \big)
\big( \mathcal{C}_{k-i}(y^{\otimes k-i}) + (k-i) \mathcal{G}_{k-i}(y^{\otimes
k-i}) \big),
\end{align*}
and thus, using \eqref{eq:rpProof2}, relation \eqref{eq:aux3} becomes
\begin{equation} \label{eqRPExpansion2}
\tilde{q}_{p,k}(y)= \frac{2k}{k!} \mathcal{T}_k(\mathcal{B}\mathcal{B}^* \Pi y,
y^{\otimes k-1})
+ \frac{1}{k!} \mathcal{R}_k(y^{\otimes k}).
\end{equation}
For $p+1 \leq k \leq 2p$, we have
\begin{equation} \label{eqRPExpansion3}
\tilde{q}_{p,k}(y)= \sum_{i=k-p}^p q_{p,i}(y) q_{p,k-i}(y).
\end{equation}
Using \eqref{eqRPExpansion1}, \eqref{eqRPExpansion0}, \eqref{eqRPExpansion2}, and
\eqref{eqRPExpansion3}, and grouping  monomial functions of same degree,
we obtain
\begin{align*}
& - D\mathcal{V}_p(y)(Ay) -
\frac{1}{2} \| y \|_Y^2 + \frac{1}{2\alpha }
\left( D\mathcal{V}_p(y) (Ny + B)
\right)^2 \\
& \qquad =  -\frac{1}{2} \big[ 2 \mathcal{T}_2(A_\Pi y, y) + \| y \|_Y^2 + \frac{1}{\alpha}
\mathcal{T}_2(B,y)^2 \big] \\
& \qquad \qquad + \sum_{k=3}^p \frac{1}{k!} \big[ k \mathcal{T}_k(A y, y^{\otimes k-1})
- \frac{1}{2\alpha} \mathcal{R}_k(y^{\otimes k}) \big]
+ \frac{1}{2\alpha} \sum_{k=p+1}^{2p} \tilde{q}_{p,k}(y)
= r_p(y).
\end{align*}
The terms in brackets in the above expression
are equal to zero by \eqref{eq:rpProof3}.
This proves \eqref{eq:resi_hjb1}.

Let us prove \eqref{eq:resi_hjb2}. From \eqref{eq:poly_val_func} and Theorem
\ref{thm:mult_lin_form},
we obtain that for all $p \geq 2$, there exists a constant $\tilde{C}>0$ such
that for all $i=1,...,p$, $|q_{p,j}(y)| \leq \tilde{C} \| y \|_{V} \,\| y
\|_{Y}^{j-1}$. We deduce that for all $i=p,...,2p$ and all $j= i-p,...,p$,
\begin{equation*}
|q_{p,j}(y)q_{p,i-j}(y)| \leq \tilde{C}^2 \, \| y \|_{V}^2 \, \| y \|_{Y}^{i-2}.
\end{equation*}
Estimate \eqref{eq:resi_hjb2} follows then from the definition of $r_p$.
\end{proof}

\begin{lemma} \label{lemma:estimateRp}
Let $p \geq 2$ and let $\delta_0 > 0$ be the constant given by Theorem
\ref{thm:well_posed_feedback}. Then, there exists a constant $C>0$ such that for
all
$y_0 \in B_Y(\delta_0)$,
\begin{equation*}
\int_0^\infty r_p \big( \bar{y}(t) \big) \dd t \leq C \| y_0 \|_{Y}^{p+1} \quad
\text{and} \quad
\int_0^\infty r_p \big( S(\mathbf{u}_p,y_0;t) \big) \dd t \leq C \| y_0
\|_{Y}^{p+1},
\end{equation*}
where $\bar{y}$ is an optimal trajectory for problem \eqref{eqProblem} with
initial value $y_0$.
\end{lemma}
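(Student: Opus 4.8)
The plan is to combine the pointwise bound \eqref{eq:resi_hjb2} from Proposition \ref{prop:resi_hjb} with the a priori estimates already available for the two trajectories appearing in the statement. First I would integrate \eqref{eq:resi_hjb2} in time, pulling the lowest-order factor out in the sup norm: for any $y \in L^2(0,\infty;V) \cap L^\infty(0,\infty;Y)$,
\begin{equation*}
\int_0^\infty |r_p(y(t))| \dd t
\le C \sum_{i=p+1}^{2p} \int_0^\infty \| y(t) \|_V^2 \, \| y(t) \|_Y^{i-2} \dd t
\le C \sum_{i=p+1}^{2p} \| y \|_{L^\infty(0,\infty;Y)}^{i-2} \, \| y \|_{L^2(0,\infty;V)}^2,
\end{equation*}
which is legitimate because $i-2 \ge p-1 \ge 1$ for $p \ge 2$, so every exponent is nonnegative and the $L^\infty(0,\infty;Y)$ factor may be taken out of the integral. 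Bounding $\int_0^\infty |r_p|$ is of course stronger than the claimed bound on $\int_0^\infty r_p$.

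For the optimal trajectory $\bar y$, Proposition \ref{prop:bound_opt_sol} gives $\| \bar y \|_{L^\infty(0,\infty;Y)} \le C \| y_0 \|_Y$ and $\| \bar y \|_{L^2(0,\infty;V)} \le C \| y_0 \|_Y$ for every $y_0 \in B_Y(\delta_0)$ (and in particular $\bar y$ lies in the space above). Substituting into the previous display yields $\int_0^\infty |r_p(\bar y(t))| \dd t \le C \sum_{i=p+1}^{2p} \| y_0 \|_Y^{i}$, and since $\| y_0 \|_Y \le \delta_0$ each summand can be written as $\| y_0 \|_Y^{p+1} \| y_0 \|_Y^{i-p-1} \le \delta_0^{i-p-1} \| y_0 \|_Y^{p+1}$; absorbing the finitely many constants $\delta_0^{i-p-1}$ into $C$ gives the first estimate. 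For the closed-loop trajectory $S(\mathbf{u}_p,y_0)$, Theorem \ref{thm:well_posed_feedback} gives $\| S(\mathbf{u}_p,y_0) \|_{W_\infty} \le C \| y_0 \|_Y$; the embedding \eqref{eq:WinC} then controls $\| S(\mathbf{u}_p,y_0) \|_{L^\infty(0,\infty;Y)}$, while $\| S(\mathbf{u}_p,y_0) \|_{L^2(0,\infty;V)} \le \| S(\mathbf{u}_p,y_0) \|_{W_\infty}$ by the definition of $W_\infty$. Plugging these into the same display and arguing exactly as before produces the second estimate.

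I do not expect any genuine obstacle here: the entire argument is a short computation resting on results already in hand, namely the pointwise control of $r_p$ in Proposition \ref{prop:resi_hjb} and the linear-in-$\|y_0\|_Y$ trajectory bounds of Proposition \ref{prop:bound_opt_sol} and Theorem \ref{thm:well_posed_feedback}. The only point meriting care is the bookkeeping of the exponents — one must verify that every monomial in \eqref{eq:resi_hjb2} has total degree at least $p+1$ (indeed exactly $i \ge p+1$, once the two $\|\cdot\|_V$ factors are counted), so that the surplus powers of $\|y_0\|_Y$ are bounded by powers of $\delta_0$ and can be swept into the constant.
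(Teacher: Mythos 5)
Your proposal is correct and follows essentially the same route as the paper: integrate the pointwise bound \eqref{eq:resi_hjb2} from Proposition \ref{prop:resi_hjb}, pull out the $L^\infty(0,\infty;Y)$ factor, and insert the linear-in-$\|y_0\|_Y$ trajectory bounds from Proposition \ref{prop:bound_opt_sol} and Theorem \ref{thm:well_posed_feedback}, absorbing the surplus powers into the constant via $\delta_0$. The only (harmless) difference is that you spell out explicitly how the $L^\infty(0,\infty;Y)$ and $L^2(0,\infty;V)$ bounds for the closed-loop trajectory follow from the $W_\infty$ estimate, which the paper states directly.
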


\begin{proof}
By Theorem \ref{thm:well_posed_feedback}, there exists a constant $C_1$ such
that for all $y_0 \in B_Y(\delta_0)$,
\begin{equation*}
\| y_p \|_{L^2(0,\infty;V)} \leq C_1 \| y_0 \|_Y \quad \text{and} \quad
\| y_p \|_{L^\infty(0,\infty;Y)} \leq C_1 \| y_0 \|_Y,
\end{equation*}
where $y_p= S(\mathbf{u}_p,y_0)$. By Proposition \ref{prop:bound_opt_sol},
increasing if necessary the value of $C_1>0$,  for each  solution $\bar{u}$ to
problem \eqref{eqProblem}
associated to an initial value $y_0 \in B_Y(\delta_0)$    we have
\begin{equation*}
\| \bar{y} \|_{L^2(0,\infty;V)} \leq C_1 \| y_0 \|_{Y} \quad \text{and} \quad
\| \bar{y} \|_{L^\infty(0,\infty;Y)} \leq C_1 \| y_0 \|_{Y},
\end{equation*}
where $\bar{y}= S(\bar{u},y_0)$.
Let us denote by $C_2$ the constant provided by Proposition \ref{prop:resi_hjb}.
We obtain
\begin{align*}
\int_0^\infty r_p(\bar{y}(t)) \dd t
\leq \ & C_2 \sum_{i=p+1}^{2p} \| \bar{y} \|_{L^2(0,T;V)}^2  \| \bar{y}
\|_{L^\infty(0,\infty;Y)}^{i-2} \\
\leq \ & C_2 \sum_{i=p+1}^{2p} C_1^i \| y_0 \|_{Y}^i
\leq  C_2 \| y_0 \|_{Y}^{p+1} \sum_{i=p+1}^{2p} C_1^i \delta_0^{i-(p+1)},
\end{align*}
and these inequalities also hold for $y_p$. The lemma follows  with $C= C_2
\sum_{i=p+1}^{2p} C_1^i \delta_0^{i-(p+1)}$.
\end{proof}

In the following lemma, we establish that the control  $\mathbf{U}_p(y_0)=
\mathbf{u}_p(S(\mathbf{u}_p,y_0;\cdot))$  obtained from \eqref{eq:feedback_law}
is optimal with respect to $\mathcal{J}_p(\cdot,y_0)$ for small values of $\| y_0 \|_Y$, over all feasible controls
for \eqref{eqProblem}.

\begin{lemma} \label{lemma:optimalityJp}
Let $p \geq 2$ and let $\delta_0 > 0$ be given by Theorem
\ref{thm:well_posed_feedback}. Let ${u}$
be any feasible control for \eqref{eqProblem} with initial value $y_0 \in
B_Y(\delta_0)\cap V$. Then
$\mathcal{J}_p({u},y_0)$ and $\mathcal{J}_p(\mathbf{U}_p(y_0),y_0)$ are finite
and
\begin{equation*}
\mathcal{V}_p(y_0)= \mathcal{J}_p(\mathbf{U}_p(y_0),y_0) \leq
\mathcal{J}_p({u},y_0).
\end{equation*}
\end{lemma}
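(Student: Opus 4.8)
The plan is to establish the identity $\mathcal{V}_p(y_0)=\mathcal{J}_p(\mathbf{U}_p(y_0),y_0)$ via the HJB-type relation from Proposition~\ref{prop:resi_hjb}, and then derive the inequality $\mathcal{J}_p(\mathbf{U}_p(y_0),y_0)\le\mathcal{J}_p(u,y_0)$ by a standard verification argument exploiting that $\mathbf{u}_p$ is precisely the minimizer of the pointwise Hamiltonian. First I would note that by Corollary~\ref{cor:V_loc_bound} the closed-loop trajectory $y_p:=S(\mathbf{u}_p,y_0)$ lies in $W_\infty$, hence in $L^2(0,\infty;V)\cap L^\infty(0,\infty;Y)$ with norm controlled by $\|y_0\|_Y$, so by the estimate \eqref{eq:resi_hjb2} of Proposition~\ref{prop:resi_hjb} (integrated as in Lemma~\ref{lemma:estimateRp}) the term $\int_0^\infty r_p(y_p(t))\dd t$ is finite; together with the quadratic cost being finite (Corollary~\ref{cor:V_loc_bound}), this shows $\mathcal{J}_p(\mathbf{U}_p(y_0),y_0)$ is finite. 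For a general feasible $u$ for \eqref{eqProblem}, finiteness of $\mathcal{J}_p(u,y_0)$ follows because $\mathcal{J}(u,y_0)<\infty$ forces $y:=S(u,y_0)\in L^2(0,\infty;Y)$, hence $y\in L^2(0,\infty;V)\cap L^\infty(0,\infty;Y)$ by the estimates \eqref{eq:RegEstimBis1}--\eqref{eq:RegEstimBis2} of Lemma~\ref{lemma:RegEstim}, and then \eqref{eq:resi_hjb2} again bounds $\int_0^\infty|r_p(y(t))|\dd t$.

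The core computation is a chain-rule / fundamental-theorem-of-calculus argument applied to $t\mapsto\mathcal{V}_p(y(t))$ along an arbitrary trajectory $y=S(u,y_0)$ with $u$ feasible. Since $\mathcal{V}_p$ is a finite sum of bounded multilinear forms evaluated diagonally, Lemma~\ref{lemma:ChainRule} gives $\frac{\dd}{\dd t}\mathcal{V}_p(y(t))=D\mathcal{V}_p(y(t))\big(Ay(t)+(Ny(t)+B)u(t)\big)$ in $W^{1,1}(0,\infty)$ (using the regularity $y\in W_\infty$ so that the state equation holds in $L^2(0,\infty;V^*)$ and $D\mathcal{V}_p(y(\cdot))$ pairs against it). Using $\mathcal{V}_p(y(t))\to 0$ as $t\to\infty$ (which follows from $y(t)\to 0$ in $Y$ and continuity of the multilinear forms) and $\mathcal{V}_p(y(0))=\mathcal{V}_p(y_0)$, integration yields
\begin{equation*}
\mathcal{V}_p(y_0)=-\int_0^\infty D\mathcal{V}_p(y(t))\big(Ay(t)+(Ny(t)+B)u(t)\big)\dd t.
\end{equation*}
Now substitute the HJB identity \eqref{eq:resi_hjb1}, which gives $-D\mathcal{V}_p(y)(Ay)=r_p(y)+\tfrac12\|y\|_Y^2-\tfrac1{2\alpha}\big(D\mathcal{V}_p(y)(Ny+B)\big)^2$; after rearranging, the integrand becomes $\ell(y(t),u(t))+r_p(y(t))$ minus the nonnegative quantity $\tfrac{\alpha}{2}\big(u(t)+\tfrac1\alpha D\mathcal{V}_p(y(t))(Ny(t)+B)\big)^2$, by completing the square in $u$. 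This is exactly the pointwise statement that $\mathbf{u}_p(y)=-\tfrac1\alpha D\mathcal{V}_p(y)(Ny+B)$ minimizes the Hamiltonian. Hence
\begin{equation*}
\mathcal{V}_p(y_0)=\mathcal{J}_p(u,y_0)-\frac{\alpha}{2}\int_0^\infty\Big(u(t)+\tfrac1\alpha D\mathcal{V}_p(y(t))(Ny(t)+B)\Big)^2\dd t\le\mathcal{J}_p(u,y_0),
\end{equation*}
with equality precisely when $u(t)=\mathbf{u}_p(y(t))$ a.e., i.e.\ when $u=\mathbf{U}_p(y_0)$ (invoking uniqueness of the closed-loop solution from Theorem~\ref{thm:well_posed_feedback}). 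Taking $u=\mathbf{U}_p(y_0)$ gives $\mathcal{V}_p(y_0)=\mathcal{J}_p(\mathbf{U}_p(y_0),y_0)$, and the displayed chain of (in)equalities for general feasible $u$ gives the claim.

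The main obstacle I anticipate is the rigorous justification of the chain rule and the integration step: one needs $y\in W^{1,1}(0,\infty;Y^k)$-type regularity to apply Lemma~\ref{lemma:ChainRule} for each $\mathcal{T}_k$, and to pair $D\mathcal{V}_p(y(t))$ against $\frac{\dd}{\dd t}y(t)\in L^2(0,\infty;V^*)$ one must verify that $D\mathcal{V}_p(y(t))$ takes values in $V$ with adequate integrability — this is where assumption \eqref{ass:A2} ($N\in\mathcal{L}(V,Y)$, $N^*\in\mathcal{L}(V,Y)$) and the structure of the $\mathcal{T}_k$ matter, so that terms like $\mathcal{T}_k(Ny+B,y^{\otimes k-1})$ are well-defined and the duality pairing in \eqref{eq:resi_hjb1} is legitimate. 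A secondary technical point is confirming $\mathcal{V}_p(y(t))\to 0$; this follows from $\|y(t)\|_Y\to 0$ (guaranteed by Lemma~\ref{lemma:RegEstim} for feasible $u$, and by Theorem~\ref{thm:well_posed_feedback} together with $y_p\in W_\infty\hookrightarrow C_0$ for the closed loop) and the Lipschitz bound \eqref{eq:13a} on bounded sets. Everything else is bookkeeping with the already-established estimates.
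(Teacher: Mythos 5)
Your proposal is correct and follows essentially the same route as the paper: apply the chain rule of Lemma \ref{lemma:ChainRule} to $t\mapsto\mathcal{V}_p(y(t))$, substitute the identity \eqref{eq:resi_hjb1} of Proposition \ref{prop:resi_hjb}, complete the square to get the integrand $-\ell_p(y,u)+\tfrac{\alpha}{2}(u-\mathbf{u}_p(y))^2$, and pass to the limit $T\to\infty$ using $y(T)\to 0$ and continuity of $\mathcal{V}_p$. The regularity obstacle you flag at the end is precisely where the hypothesis $y_0\in V$ enters: the paper invokes Lemma \ref{le2.2} to obtain $y\in H^1(0,T;Y)$, which supplies the $W^{1,1}(0,T;Y)$ regularity needed for Lemma \ref{lemma:ChainRule}.
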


\begin{proof}
We start with a computation for an arbitrary feasible control associated with an
initial condition
$y_0 \in B_Y(\delta_0)\cap V$. There exists at least one such control, namely
$\mathbf{U}_p(y_0)$.
Let us set ${y}= S({u},y_0)$.
By Lemmas \ref{lemma:RegEstim} and \ref{le2.2}, we have that ${y} \in H^1(0,T;Y)$, for every $T>0$.
Together with Lemma \ref{lemma:estimateRp}, this implies that
$\mathcal{J}_p({u},y_0)$ and $\mathcal{J}_p(\mathbf{U}_p(y_0),y_0)$ are finite.
Moreover,
for all $T>0$, we have $y \in W^{1,1}(0,T;Y)$ and by Lemma
\ref{lemma:ChainRule},
the chain rule can be applied to each of the bounded multilinear forms which
appear as  summands  in $\mathcal{V}_p(y(\cdot))$.
Omitting the time variable in what follows, we obtain
\begin{align*}
\frac{\dd}{\dd t} \mathcal{V}_p(y)= D\mathcal{V}_p(y) \big( A y + (N y +B ) u
\big)
= D\mathcal{V}_p(y)(A y) + u D\mathcal{V}_p(y) (N y +B) .
\end{align*}
By Proposition \ref{prop:resi_hjb},
\begin{align*}
 \frac{\dd}{\dd t} \mathcal{V}_p(y)&=-r_p(y) -\frac{1}{2} \| y \|_Y^2 +
\frac{1}{2\alpha}
\big( D\mathcal{V}_p(y)(Ny +B) \big)^2 + u D\mathcal{V}_p (y)
(N y +B) \\
&= -\ell_p(y,u) + \frac{1}{2\alpha}
\big( D\mathcal{V}_p(y)(Ny
+B) \big)^2 + u D\mathcal{V}_p (y)
(N y +B) + \frac{\alpha}{2}
u^2,
\end{align*}
where
\begin{align*}
\ell_p(y,u):= \frac{1}{2} \| y \|_Y^2 + \frac{\alpha}{2} u^2 + r_p(y) .
\end{align*}
Hence, it follows that
\begin{align} \label{eq:verifTh1}
 \frac{\dd}{\dd t} \mathcal{V}_p(y)&= - \ell_p(y,u) +
\frac{\alpha}{2} \left( u + \frac{1}{\alpha} D\mathcal{V}_p(y) (
N y +B ) \right)^2
= -\ell_p(y,u) + \frac{\alpha}{2} \big( u- \mathbf{u}_p(y) \big)^2.
\end{align}
We deduce that for an arbitrary feasible $u$,
\begin{equation} \label{eq:verifTh2a}
\mathcal{V}_p({y}(T)) - \mathcal{V}_p(y_0) \geq - \int_0^T
\ell_p({y},{u}) \dd t.
\end{equation}
We also deduce from \eqref{eq:verifTh1} that for the specific
$u=\mathbf{U}_p(y_0)$,
\begin{equation} \label{eq:verifTh2b}
\mathcal{V}_p(y_p(T)) - \mathcal{V}_p(y_0) = - \int_0^T
\ell_p(y_p,\mathbf{U}_p(y_0)) \dd t,
\end{equation}
since for this control, the squared expression vanishes.
By Lemma \ref{lemma:RegEstim}, we have
$\lim_{T\to \infty} y(T)=0 \text{ and } \lim_{T\to \infty}y_p(T)=0 \text{ in }
Y.$
Together with the continuity of $\mathcal{V}_p$ established in Proposition
\ref{prop:continuityValueFunction}, this implies that
\begin{equation*}
\mathcal{V}_p({y}(T)) \underset{T \to \infty}{\longrightarrow 0} \quad
\text{and} \quad
\mathcal{V}_p(y_p(T)) \underset{T \to \infty}{\longrightarrow} 0.
\end{equation*}
Finally, passing to the limit in \eqref{eq:verifTh2a} and \eqref{eq:verifTh2b}, we obtain
\begin{equation*}
\mathcal{J}_p({u},y_0)
= \int_0^\infty \ell_p({y},{u})
\geq \mathcal{V}_p(y_0)
= \int_0^\infty \ell_p(y_p,\mathbf{U}_p(y_0))
= \mathcal{J}_p(\mathbf{U}_p(y_0),y_0).
\end{equation*}
The lemma is proved.
\end{proof}

We now prove that $\mathcal{V}_p$ is a Taylor expansion of $\mathcal{V}$ and analyse the quality of the feedback law $\mathbf{u}_p$ in the neighborhood of 0.

\begin{theorem} \label{thm:subOptimality}
Let $\delta_0 >0$ be given by Theorem \ref{thm:well_posed_feedback}, let $C$ be
the constant given by Lemma \ref{lemma:optimalityJp}. Then, for all
$y_0\in B_Y(\delta_0)$,
\begin{align}
& \mathcal{J}(\mathbf{U}_p(y_0),y_0) \leq \mathcal{V}(y_0) + 2C
\|y_0\|_{Y}^{p+1}, \label{eq:subOptimality1} \\
& | \mathcal{V}(y_0)-\mathcal{V}_p(y_0) | \leq C \|y_0\|_{Y}^{p+1}.
\label{eq:subOptimality2}
\end{align}
\end{theorem}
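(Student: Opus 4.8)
The plan is to prove both estimates first for initial data $y_0 \in B_Y(\delta_0)\cap V$, where Lemma~\ref{lemma:optimalityJp} is available, and then to extend them to all of $B_Y(\delta_0)$ by a density and continuity argument. So fix $y_0 \in B_Y(\delta_0)\cap V$. By Proposition~\ref{prop:bound_opt_sol} there exists an optimal control $\bar u$ for \eqref{eqProblem} with trajectory $\bar y = S(\bar u,y_0)$, and $\bar u$ is in particular feasible; hence Lemma~\ref{lemma:optimalityJp} applied with $u=\bar u$ gives
\begin{equation*}
\mathcal{V}_p(y_0) = \mathcal{J}_p(\mathbf{U}_p(y_0),y_0) \leq \mathcal{J}_p(\bar u,y_0) = \mathcal{V}(y_0) + \int_0^\infty r_p(\bar y(t)) \dd t .
\end{equation*}
The proof of Lemma~\ref{lemma:estimateRp}, which runs through the pointwise bound $|r_p(y)| \leq C \sum_{i=p+1}^{2p}\|y\|_V^2\|y\|_Y^{i-2}$ of Proposition~\ref{prop:resi_hjb}, in fact yields the absolute-value estimates $\big| \int_0^\infty r_p(\bar y(t))\dd t\big| \leq C\|y_0\|_Y^{p+1}$ and $\big| \int_0^\infty r_p(S(\mathbf{u}_p,y_0;t))\dd t\big| \leq C\|y_0\|_Y^{p+1}$; the first of these combined with the display above gives $\mathcal{V}_p(y_0) \leq \mathcal{V}(y_0) + C\|y_0\|_Y^{p+1}$.

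Conversely, by the definition of $\mathcal{J}_p$ and Lemma~\ref{lemma:optimalityJp},
\begin{equation*}
\mathcal{J}(\mathbf{U}_p(y_0),y_0) = \mathcal{J}_p(\mathbf{U}_p(y_0),y_0) - \int_0^\infty r_p(S(\mathbf{u}_p,y_0;t))\dd t = \mathcal{V}_p(y_0) - \int_0^\infty r_p(S(\mathbf{u}_p,y_0;t))\dd t \leq \mathcal{V}_p(y_0) + C\|y_0\|_Y^{p+1},
\end{equation*}
so that, combining with the previous estimate, $\mathcal{J}(\mathbf{U}_p(y_0),y_0) \leq \mathcal{V}(y_0) + 2C\|y_0\|_Y^{p+1}$, which is \eqref{eq:subOptimality1} on $B_Y(\delta_0)\cap V$. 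Since $\mathbf{U}_p(y_0)$ is feasible for \eqref{eqProblem} by Corollary~\ref{cor:V_loc_bound}, we also have $\mathcal{V}(y_0) \leq \mathcal{J}(\mathbf{U}_p(y_0),y_0) \leq \mathcal{V}_p(y_0) + C\|y_0\|_Y^{p+1}$, and together with $\mathcal{V}_p(y_0) \leq \mathcal{V}(y_0) + C\|y_0\|_Y^{p+1}$ this gives \eqref{eq:subOptimality2} on $B_Y(\delta_0)\cap V$.

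To remove the restriction $y_0 \in V$, I would use that $V$ is dense in $Y$ to pick, for a given $y_0 \in B_Y(\delta_0)$, a sequence $y_0^n \in B_Y(\delta_0)\cap V$ with $y_0^n \to y_0$ in $Y$ (for instance $y_0^n = \lambda_n v_n$ with $v_n \in V$, $v_n\to y_0$, and $\lambda_n = \min(1,\delta_0/\|v_n\|_Y)\to 1$, which keeps the approximants inside the closed ball), and then pass to the limit in the inequalities already proved for each $y_0^n$. This is legitimate because every quantity entering \eqref{eq:subOptimality1}--\eqref{eq:subOptimality2} is continuous in the initial datum on $B_Y(\delta_0)$: $\mathcal{V}_p$ is a finite sum of bounded multilinear forms, hence continuous on $Y$ by Lemma~\ref{lemma:continuousTensors2}; $\mathcal{V}$ is continuous on $B_Y(\delta_0)$ by Proposition~\ref{prop:continuityValueFunction}; $y_0\mapsto\|y_0\|_Y^{p+1}$ is continuous; and $y_0 \mapsto \mathcal{J}(\mathbf{U}_p(y_0),y_0) = \frac{1}{2}\|S(\mathbf{u}_p,y_0)\|_{L^2(0,\infty;Y)}^2 + \frac{\alpha}{2}\|\mathbf{U}_p(y_0)\|_{L^2(0,\infty)}^2$ is continuous on $B_Y(\delta_0)$ because $y_0 \mapsto S(\mathbf{u}_p,y_0)$ is Lipschitz from $B_Y(\delta_0)$ into $W_\infty$ by Theorem~\ref{thm:well_posed_feedback} (and $W_\infty$ embeds continuously into $L^2(0,\infty;Y)$), while $y_0\mapsto\mathbf{U}_p(y_0)$ is continuous into $L^2(0,\infty)$ by Corollary~\ref{cor:V_loc_bound}.

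The argument is essentially an assembly of the results of Sections~\ref{sectionClosedLoop}--\ref{sectionTaylorExpandability}, so I do not anticipate a genuine obstacle; the one step deserving care is the passage from $V$-regular data to general data, which is forced because Lemma~\ref{lemma:optimalityJp} is stated only for $y_0 \in B_Y(\delta_0)\cap V$. Concretely, the points to handle are the continuity of $y_0 \mapsto \mathcal{J}(\mathbf{U}_p(y_0),y_0)$ and the choice of an approximating sequence remaining in the closed ball; everything else reduces to bookkeeping with the sign of $r_p$ and the estimate of Lemma~\ref{lemma:estimateRp}.
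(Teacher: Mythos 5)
Your proposal is correct and follows essentially the same route as the paper: prove both estimates on $B_Y(\delta_0)\cap V$ by combining Lemma \ref{lemma:optimalityJp} with the (two-sided) bound on $\int_0^\infty r_p$ from Lemma \ref{lemma:estimateRp}, then extend to all of $B_Y(\delta_0)$ by density of $V$ and continuity of $\mathcal{V}$, $\mathcal{V}_p$, and $y_0\mapsto\mathcal{J}(\mathbf{U}_p(y_0),y_0)$. Your explicit remark that the absolute-value version of Lemma \ref{lemma:estimateRp} is what is actually furnished by its proof (via \eqref{eq:resi_hjb2}) is a point the paper uses implicitly without comment.
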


\begin{proof}
We first prove the result for $y_0 \in B_Y(\delta_0) \cap V$. The following
inequalities follow directly from Lemma \ref{lemma:estimateRp} and Lemma
\ref{lemma:optimalityJp} and from the suboptimality of $\mathbf{U}(y_0)$:
\begin{align*}
\begin{array}{ll}
|\mathcal{V}_p(y_0) - \mathcal{J}(\mathbf{U}_p(y_0),y_0)| \leq C \| y_0
\|_{Y}^{p+1}, \phantom{\Big|} \qquad &
\mathcal{V}_p(y_0) \leq \mathcal{J}_p(\bar{u},y_0), \\
|\mathcal{V}(y_0) - \mathcal{J}_p(\bar{u},y_0)| \leq C \| y_0 \|_{Y}^{p+1},
\phantom{\Big|}&
\mathcal{V}(y_0) \leq \mathcal{J}(\mathbf{U}_p(y_0),y_0),
\end{array}
\end{align*}
where $\bar{u}$ is a solution to \eqref{eqProblem} with initial value $y_0$.
Therefore,
\begin{align*}
& \mathcal{J}(\mathbf{U}_p(y_0),y_0)-2 C \| y_0 \|_{Y}^{p+1}
\leq \mathcal{V}_p(y_0) - C \| y_0 \|_{Y}^{p+1}
\leq \mathcal{J}_p(\bar{u},y_0) - C \| y_0 \|_{Y}^{p+1} \\
& \qquad \leq \mathcal{V}(y_0)
\leq \mathcal{J}(\mathbf{U}_p(y_0),y_0)
\leq \mathcal{V}_p(y_0) + C \| y_0 \|_{Y}^{p+1},
\end{align*}
which proves inequalities \eqref{eq:subOptimality1} and
\eqref{eq:subOptimality2} for $y_0 \in B_Y(\delta_0) \cap V$. By Lemma
\ref{lemma:continuousTensors1}, $\mathcal{V}_p$ is continuous, by Proposition
\ref{prop:continuityValueFunction}, $\mathcal{V}$ is continuous on
$B_Y(\delta_0)$. By Theorem \ref{thm:well_posed_feedback} and Corollary
\ref{cor:V_loc_bound}, the mappings: $y_0 \in B_Y(\delta_0) \mapsto
S(\mathbf{U}_p(y_0),y_0)$ and $y_0 \in B_Y(\delta_0) \mapsto \mathbf{U}_p(y_0)$
are
both continuous. Moreover, the following mapping is continuous:
\begin{equation*}
(u,y) \in L^2(0,\infty) \times W_\infty \mapsto \frac{1}{2} \| y
\|_{L^2(0,T;Y)}^2 + \frac{\alpha}{2} \| u \|_{L^2(0,\infty)}^2.
\end{equation*}
Therefore, by composition, the mapping $y_0 \in B_Y(\delta_0) \mapsto
\mathcal{J}(\mathbf{U}_p(y_0),y_0)$ is continuous. Finally, since $B_Y(\delta_0)
\cap V$ is dense in $B_Y(\delta_0)$, we can pass to the limit in inequalities
\eqref{eq:subOptimality1} and \eqref{eq:subOptimality2}. They are therefore
satisfied for all $y_0 \in B_Y(\delta_0)$. The theorem follows.
\end{proof}

\begin{remark}
{\em
Inequality \eqref{eq:subOptimality1} gives an estimate for the approximation
quality of the   feedback law $\mathbf{u}_p$ in the neighborhood of 0.
In general, an inequality like \eqref{eq:subOptimality2} does not imply that
$\mathcal{V}$ is $p$-times differentiable in the neighborhood of $0$.
Indeed, consider the function
\begin{equation*}
f\colon x \in \R \mapsto
\begin{cases}
\begin{array}{cl}
x^3 \sin(1/x^2) & \text{ if $x \neq 0$} \\
0 & \text{ if $x= 0$}.
\end{array}
\end{cases}
\end{equation*}
Then, for all $x \in \R$, $|f(x)| \leq |x|^3$, however, $f$ is not continuously
differentiable at 0, since for all $x \neq 0$, $f'(x)= 3x^2 \sin(1/x^2) - 2
\cos(1/x^2)$, thus $f'(x)\nrightarrow 0$ when $x \downarrow 0$.}
\end{remark}

We finally give an error estimate for the closed-loop control $\mathbf{U}_p(y_0)$ associated with $\mathbf{u}_p$, for small values of $y_0$.

\begin{theorem} \label{thm:errorEstimForUp}
Let $\delta_0$ be given by Theorem \ref{thm:well_posed_feedback}. There exist $\delta_1 \in (0,\delta_0]$ and $C>0$ such that for all $y_0 \in B_Y(\delta_1)$, there exists a solution $\bar{u}$ to problem \eqref{eqProblem} with initial value $y_0$ satisfying the following error estimates:
\begin{equation} \label{eq:errorEstimForUp}
\| \bar{y} - S(\mathbf{u}_p,y_0) \|_{W_\infty} \leq C \| y_0 \|_Y^{(p+1)/2} \quad \text{and} \quad
\| \bar{u} - \mathbf{U}_p(y_0) \|_{L^2(0,\infty)} \leq C \| y_0 \|_Y^{(p+1)/2},
\end{equation}
where $\bar{y}= S(\bar{u},y_0)$.
\end{theorem}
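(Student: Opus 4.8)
The plan is to exploit the second-order growth of the perturbed cost $\mathcal{J}_p$ around its minimizer $\mathbf{U}_p(y_0)$, combined with the fact (Theorem \ref{thm:subOptimality}) that $\mathbf{U}_p(y_0)$ is $\mathcal{O}(\|y_0\|_Y^{p+1})$-optimal for the original cost $\mathcal{J}$, to squeeze the distance between an optimal control $\bar u$ and $\mathbf{U}_p(y_0)$. First I would recall from Lemma \ref{lemma:optimalityJp} that $\mathcal{V}_p(y_0) = \mathcal{J}_p(\mathbf{U}_p(y_0),y_0) \le \mathcal{J}_p(\bar u,y_0)$, and from the verification identity \eqref{eq:verifTh1} that, integrated in time,
\begin{equation*}
\mathcal{J}_p(\bar u,y_0) - \mathcal{J}_p(\mathbf{U}_p(y_0),y_0) = \frac{\alpha}{2} \int_0^\infty \big( \bar u(t) - \mathbf{u}_p(\bar y(t)) \big)^2 \dd t,
\end{equation*}
where $\bar y = S(\bar u,y_0)$; here one must be slightly careful since this identity was derived along the closed-loop trajectory, so I would re-run the chain-rule computation of Lemma \ref{lemma:optimalityJp} with the open-loop optimal pair $(\bar u,\bar y)$, using that $\bar y \in H^1_{\mathrm{loc}}(0,\infty;Y)$ by Proposition \ref{prop:bound_opt_sol} together with Lemma \ref{le2.2}, and that $\mathcal{V}_p(\bar y(T)) \to 0$ since $\bar y(T)\to 0$ in $Y$.

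Next I would bound the left-hand side. By Lemma \ref{lemma:estimateRp}, $|\int_0^\infty r_p(\bar y)\dd t| \le C\|y_0\|_Y^{p+1}$ and $|\int_0^\infty r_p(S(\mathbf{u}_p,y_0))\dd t| \le C\|y_0\|_Y^{p+1}$, hence
\begin{equation*}
\mathcal{J}_p(\bar u,y_0) = \mathcal{J}(\bar u,y_0) + \mathcal{O}(\|y_0\|_Y^{p+1}) = \mathcal{V}(y_0) + \mathcal{O}(\|y_0\|_Y^{p+1}),
\end{equation*}
and, combining Lemma \ref{lemma:optimalityJp} with \eqref{eq:subOptimality1}, $\mathcal{J}_p(\mathbf{U}_p(y_0),y_0) = \mathcal{V}_p(y_0) = \mathcal{J}(\mathbf{U}_p(y_0),y_0) + \mathcal{O}(\|y_0\|_Y^{p+1}) \ge \mathcal{V}(y_0) - \mathcal{O}(\|y_0\|_Y^{p+1})$. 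Subtracting, $\mathcal{J}_p(\bar u,y_0) - \mathcal{J}_p(\mathbf{U}_p(y_0),y_0) = \mathcal{O}(\|y_0\|_Y^{p+1})$, so $\|\bar u - \mathbf{u}_p(\bar y)\|_{L^2(0,\infty)}^2 \le C\|y_0\|_Y^{p+1}$, i.e. $\|\bar u - \mathbf{u}_p(\bar y)\|_{L^2(0,\infty)} \le C\|y_0\|_Y^{(p+1)/2}$.

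It then remains to convert this into an estimate on $\|\bar u - \mathbf{U}_p(y_0)\|_{L^2(0,\infty)} = \|\bar u - \mathbf{u}_p(S(\mathbf{u}_p,y_0))\|_{L^2(0,\infty)}$ and on $\|\bar y - S(\mathbf{u}_p,y_0)\|_{W_\infty}$. Write $v := \bar u - \mathbf{u}_p(\bar y)$, so that $\bar y$ solves $\frac{\dd}{\dd t}\bar y = A_\Pi \bar y + F(\bar y) + (N\bar y + B)v$, $\bar y(0) = y_0$, while $y_p := S(\mathbf{u}_p,y_0)$ solves $\frac{\dd}{\dd t}y_p = A_\Pi y_p + F(y_p)$, $y_p(0)=y_0$. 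Subtracting and applying Proposition \ref{prop:reg_nonh} to $z := \bar y - y_p$, using the Lipschitz estimate of Lemma \ref{lem:LipschitzNonLin} for $F(\bar y) - F(y_p)$ (with modulus $\le 1/2$ after shrinking $\delta_0$ to some $\delta_1$, exactly as in the proof of Theorem \ref{thm:well_posed_feedback}) and a bound $\|(N\bar y + B)v\|_{L^2(0,\infty;V^*)} \le C\|v\|_{L^2(0,\infty)}$ coming from $\|\bar y\|_{L^\infty(0,\infty;Y)} \le C\|y_0\|_Y$, one absorbs the $F$-term on the left and obtains $\|\bar y - y_p\|_{W_\infty} \le C\|v\|_{L^2(0,\infty)} \le C\|y_0\|_Y^{(p+1)/2}$, which is the first estimate. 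For the second, $\bar u - \mathbf{U}_p(y_0) = v + \big(\mathbf{u}_p(\bar y) - \mathbf{u}_p(y_p)\big)$; the last difference is controlled in $L^2(0,\infty)$ by $C\|\bar y - y_p\|_{W_\infty}$ using the local Lipschitz continuity of $y \mapsto \mathbf{u}_p(y(\cdot))$ established inside the proof of Corollary \ref{cor:V_loc_bound}, giving $\|\bar u - \mathbf{U}_p(y_0)\|_{L^2(0,\infty)} \le C\|y_0\|_Y^{(p+1)/2}$. Finally, $\delta_1$ is chosen so that the above uses of Lemma \ref{lem:LipschitzNonLin} (which require $\|\bar y\|_{L^\infty}, \|y_p\|_{L^\infty}$ and the associated $L^2(0,\infty;V)$-norms small) are valid; by Proposition \ref{prop:bound_opt_sol} and Theorem \ref{thm:well_posed_feedback} these norms are $\le C\|y_0\|_Y$, so taking $\delta_1$ small enough is sufficient. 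The main obstacle I anticipate is the careful justification of the verification identity along the \emph{optimal} open-loop trajectory $(\bar u,\bar y)$ rather than the closed-loop one — one needs enough regularity of $\bar y$ to apply the chain rule of Lemma \ref{lemma:ChainRule} to each $\mathcal{T}_k(\bar y(\cdot))$ on $(0,\infty)$, and to justify that the boundary term vanishes at $T=\infty$; Proposition \ref{prop:bound_opt_sol}, Lemma \ref{le2.2}, and the continuity of $\mathcal{V}_p$ together handle this, but it must be spelled out.
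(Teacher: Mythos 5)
Your plan follows essentially the same route as the paper's proof: the verification identity \eqref{eq:verifTh1} evaluated along the optimal open-loop pair $(\bar{u},\bar{y})$ gives $\|\bar{u}-\mathbf{u}_p(\bar{y}(\cdot))\|_{L^2(0,\infty)}^2 \leq \tfrac{2}{\alpha}\big(|\mathcal{V}_p(y_0)-\mathcal{V}(y_0)| + \int_0^\infty |r_p(\bar{y})|\,\mathrm{d}t\big) \leq C\|y_0\|_Y^{p+1}$ via Theorem \ref{thm:subOptimality} and Lemma \ref{lemma:estimateRp}; the state estimate then comes from subtracting the two closed-loop-form equations, treating $f=(N\bar{y}+B)(\bar{u}-\mathbf{u}_p(\bar{y}))$ as a forcing term, and absorbing $F(\bar{y})-F(y_p)$ via Lemma \ref{lem:LipschitzNonLin}, Proposition \ref{prop:reg_nonh}, and a reduction of $\delta_1$; and the control estimate follows from the triangle inequality and the Lipschitz continuity of $y\mapsto\mathbf{u}_p(y(\cdot))$.

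The one step you have not accounted for is the passage from $y_0\in B_Y(\delta_1)\cap V$ to general $y_0\in B_Y(\delta_1)$. The regularity needed to run the chain rule along $\bar{y}$ (namely $\bar{y}\in H^1(0,T;Y)$ via Lemma \ref{le2.2}) is only available when $y_0\in V$, so your argument, as written, only proves the estimates on $B_Y(\delta_1)\cap V$. The paper closes this by a density argument: take $y_0^k\to y_0$ in $B_Y(\delta_1)\cap V$, obtain optimal controls $\bar{u}^k$ satisfying the estimates, extract a weak accumulation point $(\bar{u},\bar{y})$ as in the proof of Proposition \ref{prop:kk1}, verify that $\bar{u}$ is optimal for $y_0$, and pass to the limit in the inequalities using the continuity of $\mathbf{U}_p$ from Corollary \ref{cor:V_loc_bound}. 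This is also why the theorem asserts only that \emph{there exists} a solution $\bar{u}$ satisfying \eqref{eq:errorEstimForUp}: the limit control produced by this compactness argument need not be a prescribed one. This final step is routine given the machinery already in place, but it is not covered by your appeal to Proposition \ref{prop:bound_opt_sol}, Lemma \ref{le2.2}, and the continuity of $\mathcal{V}_p$ alone.
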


\begin{proof}
The value of $\delta_1$ is fixed to $\delta_0$ for the moment. We first prove the result for $y_0 \in B_Y(\delta_1) \cap V$, as in the proof of Theorem \ref{thm:subOptimality}. Let $\bar{u}$ be a solution to problem \eqref{eqProblem} with initial condition $y_0$ and let $\bar{y}= S(\bar{u},y_0)$, $u_p= \mathbf{U}_p(y_0)$, $y_p= S(\mathbf{u}_p,y_0)$. By Theorem \ref{thm:well_posed_feedback} and Proposition \ref{prop:bound_opt_sol}, there exists a constant $C$ independent of $y_0$ such that
\begin{equation} \label{eq:estimU1}
\| \bar{y} \|_{L^\infty(0,\infty;Y)} \leq C \delta_1, \quad
\| \bar{y} \|_{L^2(0,\infty;V)} \leq C \delta_1, \quad
\| y_p \|_{L^\infty(0,\infty;Y)} \leq C \delta_1, \quad
\| y_p \|_{L^2(0,\infty;V)} \leq C \delta_1.
\end{equation}
Let us emphasize the fact that in the proof, the mapping $\mathbf{u}_p(\bar{y}(\cdot)) \in L^2(0,\infty)$ plays an important role. It can be seen as an ``intermediate" control between $\bar{u}$ and $u_p$.

\emph{Step 1}: estimation of $\| \bar{u}(\cdot) - \mathbf{u}_p(\bar{y}(\cdot)) \|_{L^2(0,\infty)}$.
Since $y_0 \in V$, equality \eqref{eq:verifTh1} holds for $(u,y)= (\bar{u},\bar{y})$ and therefore, for a.e.\@ $t \geq 0$,
\begin{equation*}
\frac{\dd}{\dd t} \mathcal{V}_p(\bar{y}(t))
= - \ell_p(\bar{y}(t),\bar{u}(t)) + \frac{\alpha}{2} \big( \bar{u}(t)- \mathbf{u}_p (\bar{y}(t) )\big)^2.
\end{equation*}
Integrating on $[0,T]$ and passing to the limit when $T \rightarrow \infty$, as in the proof of Lemma \ref{lemma:optimalityJp}, we obtain that
\begin{equation*}
-\mathcal{V}_p(y_0)
= - \underbrace{\int_0^\infty \ell(\bar{y}(t),\bar{u}(t)) \dd t}_{=\mathcal{V}(y_0)}
- \int_0^\infty r_p(\bar{y}(t)) \dd t
+ \frac{\alpha}{2} \int_0^\infty \big( \bar{u}(t)- \mathbf{u}_p(\bar{y}(t)) \big)^2 \dd t
\end{equation*}
and finally that
\begin{equation} \label{eq:estimU2}
\| \bar{u}( \cdot ) - \mathbf{u}_p(\bar{y}(\cdot)) \|_{L^2(0,\infty)}^2
\leq \frac{2}{\alpha} \Big( |\mathcal{V}_p(y_0)-\mathcal{V}(y_0)| + \int_0^\infty | r_p(\bar{y}(t)) | \dd t \Big) \leq C \| y_0 \|_Y^{p+1},
\end{equation}
as a consequence of Theorem \ref{thm:subOptimality} and Lemma \ref{lemma:estimateRp}.

\emph{Step 2:} estimation of $\| \bar{y}- y_p \|_{W_\infty}$.
We use in this part of the proof ideas similar to the ones developed for the well-posedness of the closed-loop system in Theorem \ref{thm:well_posed_feedback}. We make use of the mapping $F$, defined by \eqref{eq:nonlinearity}. Remember that this mapping contains the non-linearities of the closed-loop system (see \eqref{eq:nonlinearities}).
Let us set
\begin{equation*}
f(t)=  \big( N \bar{y}(t) + B \big) \big( \bar{u}(t)- \mathbf{u}_p(\bar{y}(t) \big) \in V^*, \quad
\text{for a.e.\@ $t \geq 0$}.
\end{equation*}
Omitting the time variable, we have
\begin{equation*}
\frac{\dd}{\dd t} \bar{y}= A \bar{y} + (N \bar{y} + B)(\bar{u}- \mathbf{u}_p(\bar{y}))
+ (N\bar{y} + B) \mathbf{u}_p(\bar{y}) = A_\Pi \bar{y} + F(\bar{y}) + f.
\end{equation*}
We also have
\begin{equation*}
\frac{\dd}{\dd t} y_p= A_\Pi y_p + F(y_p).
\end{equation*}
Setting $z= \bar{y} - y_p$, we obtain
\begin{equation*}
\frac{\dd}{\dd t} z= A_\Pi z + F(\bar{y}) - F(y_p) + f, \quad z(0)= 0.
\end{equation*}
We compute now estimates of $\| F(\bar{y})-F(y_p) \|_{L^2(0,\infty;V^*)}$ and $\| f \|_{L^2(0,\infty;V^*)}$, in order to obtain an estimate of $\| z \|_{W_\infty}$ with Proposition \ref{prop:reg_nonh}. By definition of $f$, we have
\begin{align}
\| f \|_{L^2(0,\infty;V^*)}^2
\leq \ & 2\big( \| N \|_{\mathcal{L}(Y,V^*)}^2 \| \bar{y} \|_{L^\infty(0,\infty;Y)}^2 + \| B \|_{V^*}^2 \big) \| \bar{u}(\cdot) - \mathbf{u}_p(\bar{y}(\cdot)) \|_{L^2(0,\infty)}^2 \notag \\
\leq \ & C \| y_0 \|_{Y}^{p+1}, \label{eq:estimU3}
\end{align}
where the last inequality follows from the estimates \eqref{eq:estimU1} and \eqref{eq:estimU2}.
Since $\| \bar{y} \|_{L^\infty(0,\infty;Y)} \leq C \delta_1$ and $\| \bar{y}_p \|_{L^\infty(0,\infty;Y)} \leq C \delta_1$, we obtain with Lemma \ref{lem:LipschitzNonLin} that
\begin{align}
\| F(\bar{y})-F(y_p) \|_{L^2(0,\infty;V^*)}
\leq \ & C_1 \big( C \delta_1  + \| \bar{y} \|_{L^2(0,\infty;V)} + \| y_p \|_{L^2(0,\infty;V)} \big) \| \bar{y}-y_p \|_{W_\infty} \notag \\
\leq \ & 3 C_1 C \delta_1 \| z \|_{W_\infty}. \label{eq:estimU4}
\end{align}
We can now reduce the value of $\delta_1$ to
\begin{equation*}
\delta_1= \min \Big( \delta_0, \frac{1}{6C_1 C_2 C} \Big).
\end{equation*}
By \eqref{eq:estimU3}, \eqref{eq:estimU4}, and Proposition \ref{prop:reg_nonh},
\begin{align*}
\| z \|_{W_\infty}
\leq \ & C_2 \big( \| F(\bar{y})- F(y_p) \|_{L^2(0,\infty;V^*)} + \| f \|_{L^2(0,\infty;V^*)} \big) \\
\leq \ & 3 C_1 C_2 C \delta_1 \| z \|_{W_\infty} + C_2 C \| f \|_{L^2(0,\infty;V^*)} \\
\leq \ & \frac{1}{2} \| z \|_{W_\infty} + C \| y_0 \|_{Y}^{(p+1)/2}.
\end{align*}
It follows that
\begin{equation*}
\| z \|_{W_\infty} \leq C \| y_0 \|_Y^{(p+1)/2}.
\end{equation*}
The first estimate in \eqref{eq:errorEstimForUp} is now proved.

\emph{Step 3:} estimation of $\| \bar{u} - u_p \|_{L^2(0,\infty)}$.
Observing that $u_p(\cdot)= \mathbf{u}_p(y_p(\cdot))$, we obtain that
\begin{align}
\| \bar{u} - u_p \|_{L^2(0,\infty)}
\leq \ & \| \bar{u}(\cdot) - \mathbf{u}_p(\bar{y}(\cdot)) \|_{L^2(0,\infty)} + \| \mathbf{u}_p(\bar{y}(\cdot)) - u_p(\cdot) \|_{L^2(0,\infty)} \notag \\
\leq \ & C \| y_0 \|^{(p+1)/2} + \| \mathbf{u}_p(\bar{y}(\cdot)) - \mathbf{u}_p(y_p(\cdot)) \|_{L^2(0,\infty)}. \label{eq:estimU5}
\end{align}
We obtain an estimate of the last term of the r.h.s.\@ by proving a Lipschitz property for the mapping $y \in W_\infty \mapsto \mathbf{u}_p(y(\cdot)) \in L^2(0,\infty)$.
With similar estimates to the ones used in the proof of Lemma \ref{lem:Lipschitz1}, one can easily show that for $y_1$ and $y_2 \in B_Y(C \delta_1)$, for all $k=2,...,p$,
\begin{equation*}
|\mathcal{T}_k(Ny_2 + B, y_2^{\otimes k-1}) - \mathcal{T}_k(N y_1 + B, y_1^{\otimes k-1 })|
\leq C \big( \| y_2 - y_1 \|_{V} + \| y_1 \|_V \| y_2 - y_1 \|_Y \big).
\end{equation*}
By \eqref{eq:estimU1}, $\| \bar{y} \|_{L^\infty(0,\infty;Y)} \leq C \delta_1$ and $\| y_p \|_{L^\infty(0,\infty;Y)} \leq C \delta_1$. Therefore,
\begin{align*}
\| \mathbf{u}_p(\bar{y}(\cdot)) - \mathbf{u}_p(y_p(\cdot)) \|_{L^2(0,\infty)}^2
\leq \ & C \big( \| \bar{y}- y_p \|_{L^2(0,\infty;V)}^2 + \| y_p \|_{L^2(0,\infty;V)}^2 \| \bar{y} - y_p \|_{L^\infty(0,\infty;Y)}^2 \big) \\
\leq \ & C \| y_p - \bar{y} \|_{W_\infty}^2 \leq C \| y_0 \|_Y^{p+1}.
\end{align*}
Combining this estimate with \eqref{eq:estimU5}, we obtain the second inequality of \eqref{eq:errorEstimForUp}.

\emph{Step 4:} general case.
Let $y_0 \in B_Y(\delta_1)$. Take a sequence $(y_0^k)_{k \in \mathbb{N}}$ in $B_Y(\delta_1) \cap V$ converging to $y_0$. As we proved in the first three steps of this proof, for all $k \in \mathbb{N}$, there exists a solution $\bar{u}^k$ to problem \eqref{eqProblem} with initial condition $y_0^k$ such that
\begin{equation} \label{eq:estimU6}
\| \bar{y}^k - S(\mathbf{u}_p,y_0^k) \|_{W_\infty} \leq C \| y_0^k \|_Y^{(p+1)/2} \quad \text{and} \quad \| \bar{u}^k - \mathbf{U}_p(y_0^k) \|_{L^2(0,\infty)}
\leq C \| y_0^k \|_Y^{(p+1)/2},
\end{equation}
where $\bar{y}^k= S(\bar{u}^k,y_0^k)$.
Using arguments similar to the ones used in the proof of Proposition \ref{prop:kk1}, we obtain that there exists an accumulation point $(\bar{u},\bar{y})$ to the sequence $(\bar{u}^k,\bar{y}^k)$ for the weak topology of $L^2(0,\infty) \times W_\infty$ which is such that $\bar{u}$ is a solution to problem \eqref{eqProblem} with initial condition $y_0$ and such that $\bar{y}= S(\bar{u},y_0)$. By Corollary \ref{cor:V_loc_bound}, the mapping $\mathbf{U}_p$ is continuous. Therefore, we can pass to the limit in \eqref{eq:estimU6} and finally obtain the estimates
\begin{equation*}
\| \bar{y} - S(\mathbf{u}_p,y_0) \|_{W_\infty} \leq C \| y_0 \|_Y^{(p+1)/2} \quad \text{and} \quad
\| \bar{u} - \mathbf{U}_p(y_0) \|_{L^2(0,\infty)} \leq C \| y_0 \|_Y^{(p+1)/2},
\end{equation*}
which concludes the proof.
\end{proof}

\begin{remark}
The constants $\delta_0$, $\delta_1$, and $C$, which are provided by Theorem \ref{thm:subOptimality} and Theorem \ref{thm:errorEstimForUp}, depend on $p$.
\end{remark}

\section{Stabilization of a Fokker-Planck equation}\label{sec:FP}

In this section, we show that assumptions \eqref{ass:A1}-\eqref{ass:A4} are satisfied for a
concrete infinite-dimensional bilinear  optimal control problem.
Following the setup discussed in \cite{BreKP16}, we focus on the controlled
Fokker-Planck equation
\begin{equation}\label{eq:FP_setup}
  \begin{aligned}
  \frac{\partial \rho}{\partial t} &= \tilde \nu \Delta \rho +
\nabla \cdot (\rho \nabla G)+ u \nabla \cdot (\rho \nabla \alpha)  && \text{in
} \Omega \times (0,\infty), \\
0 &= (\tilde \nu \nabla \rho + \rho \nabla G) \cdot \vec{n} && \text{on }
\Gamma \times
(0,\infty), \\
\rho(x,0) &= \rho_0(x) && \text{in } \Gamma,
  \end{aligned}
\end{equation}
where $\tilde \nu>0,\; \Omega \subset \mathbb R^n$ denotes a bounded domain with
smooth
boundary $\Gamma = \partial \Omega$, and $\rho_0$  denotes an
initial probability distribution with $\int_{\Omega} \rho_0(x)\mathrm{d}x =1.$
To apply the results from \cite{BreKP16}, we assume that $\alpha$ and $G \in W^{1,\infty} \cap
W^{2,\max(2,n)}(\Omega)$, and that the control shape
function fulfills $\nabla \alpha \cdot \vec{n} = 0$ on $\Gamma.$ We introduce
$\rho_\infty= \frac{e^{-\Phi}}{\int_\Omega e^{-\Phi}\dd x},$ where $\Phi=\log
\tilde \nu + \frac{W}{\tilde \nu}$, and observe that $\rho_\infty$ is an
eigenstate associated with the eigenvalue $0$.
While the system is known to converge to this stationary distribution, this
can happen inadequately slowly and a control mechanism becomes relevant.
Considering \eqref{eq:FP_setup} as an abstract bilinear control, we arrive at
\begin{align*}
  \dot{\rho}(t)&= A \rho(t) + N \rho(t) u(t),
\quad \rho(0) =\rho_0,
\end{align*}
where the operators $A$ and $N$ are given by
\begin{equation*}
\begin{aligned}
 A\colon \mD(A)&\subset L^2(\Omega) \to
L^2(\Omega),\\
\mD(A)&= \left\{\rho \in H^2(\Omega) \left| (\tilde \nu \nabla \rho +
\rho \nabla G) \cdot \vec{n}  =0 \text{ on } \Gamma \right. \right\}, \\
A\rho& = \tilde \nu \Delta \rho + \nabla \cdot (\rho \nabla G), \\[1ex]
N\colon H^1(\Omega)& \to L^2(\Omega),\ \  N\rho =
\nabla \cdot (\rho \nabla \alpha).
\end{aligned}
\end{equation*}
In order to consider \eqref{eq:FP_setup} as a stabilization problem of the form
\eqref{eqProblem}, we introduce a state variable $y:=\rho-\rho_\infty$ as the
deviation to the stationary distribution. As discussed in \cite{BreKP16}, this
yields a system of the form
\begin{align*}
  \dot{y}(t)&= A y(t) + N \rho(t) u(t) + Bu(t),
\quad y(0) =\rho_0-\rho_\infty,
\end{align*}
where
\begin{align*}
  B\colon \mathbb R &\to L^2(\Omega), \ \ B c=
 cN\rho_\infty.
\end{align*}
Since $\int_\Omega B\,\dd x = \int _\Omega N{\rho_\infty}\, \dd x =0, $
the control does not influence the one-dimensional subspace associated with
$\rho_\infty.$ Therefore, a splitting of the state space in the form
\begin{align*}
  Y = L^2(\Omega)= \mathrm{im}(P) \oplus
\mathrm{im}(I-P)=: Y_P +
Y_Q
\end{align*}
by means of the projection $P$ defined by
\begin{equation*}
  \begin{aligned}
    &P\colon L^2(\Omega)\to L^2(\Omega), \quad Py = y -
 \int_{\Omega} y \;\mathrm{d}x \;  \rho_\infty, \\
&\mathrm{im}(P)  =\left\{ v \in L^2(\Omega)\colon
\int_\Omega v\; \dd x = 0 \right\}, \quad
\mathrm{ker}(P) = \mathrm{span}\left\{\rho_\infty \right\},
  \end{aligned}
\end{equation*}
was introduced in \cite{BreKP16}.
We thus focus on
\begin{equation} \label{eq:FP_dec}
  \begin{aligned}
   \dot{y}_P &= \widehat{A} y_P +
\widehat{N} y_{P} u + \widehat{B} u,  \ \
y_P(0) = P \rho _0 ,
  \end{aligned}
\end{equation}
where
\begin{align*}
  \widehat{A}&=PA
I_P \ \text{ with }\mD(\widehat{A})=
\mD(A)\cap
Y_P , \\
\widehat{N} &= PNI_P \ \text{ with }
\mD(\widehat{N}) =H^1(\Omega) \cap Y_P
 , \\
\widehat{B} &= PB,
\end{align*}
and $I_P\colon Y_P \to Y$ denotes the
injection of $Y_P$ into $Y.$ With system
\eqref{eq:FP_dec}, we associate the cost functional
\begin{equation}\label{eq:FP_cost_func1}
  \mathcal{J}(u,\rho_0)=\frac{1}{2}\int_0^\infty \|y_P(t) \|^2_{L^2(\Omega)} \;
\dd t
+ \frac{\alpha}{2}
\int_0^\infty u(t)^2 \; \dd t.
\end{equation}
Let us verify that the  assumptions \eqref{ass:A1}-\eqref{ass:A4} are
satisfied with $Y=Y_P$ and
$V=H^1(\Omega)\cap Y_P$, endowed with the inner products
from $L^2(\Omega)$ and $H^1(\Omega)$ respectively and for the bilinear system \eqref{eq:FP_dec} with operators $\widehat{A}$, $\widehat{N}$, and $\widehat{B}$.
Concerning \eqref{ass:A1}, we
have for every $v \in V$ that
\begin{align*}
a(v,v) = \ & \langle \tilde \nu \nabla v +v \nabla G, \nabla v \rangle_{L^2(\Omega)} \\
 \geq \ & \tilde \nu \|\nabla v\|^2_{L^2(\Omega)} -|(v \nabla G,\nabla
v)_{L^2(\Omega)}| \ge \frac{\tilde \nu}{2}\|\nabla v\|^2_{L^2(\Omega)} -
\frac{1}{2\tilde \nu} \|\nabla G\|^2_{L^\infty(\Omega)}\|v\|^2_{L^2(\Omega)}.
\end{align*}
Thus \eqref{ass:A1} holds with $\nu=\frac{\tilde \nu}{2}$ and $\lambda =
\frac{1}{2\tilde \nu} \|\nabla G\|^2_{L^\infty(\Omega)}$
Using that $P^*y =y -\int_{\Omega} \rho_\infty y\, \dd x \,
\mathbbm{1}$, we further obtain that
\begin{equation*}
\widehat{N}^*\phi = I_P^* N^* P^*\phi =
I_P^* N^* \phi = - I_P^*(\nabla \phi
\nabla\alpha),
\end{equation*}
since $\nabla \alpha \cdot \vec{n} = 0$ and
$I_P^* \psi = \psi - \frac{1}{\Omega} \int_\Omega \psi \,dx \;
\mathbbm{1} $. It is now clear that \eqref{ass:A2} holds.
Assumption \eqref{ass:A3} is satisfied with $V= H^1(\Omega) \cap
Y_P$, see e.g.\@ \cite[Part II, Chapter 1, Section 6]{Benetal07}. Finally, the
exponential stability of the uncontrolled system \eqref{eq:FP_dec} (i.e.\@ with $u= 0$) implies assumption \eqref{ass:A4} with $F=0$, see \cite[Section 4]{BreKP16}.

\section{Conclusions}

Techniques for the computation of a Taylor expansion of the value function associated with an optimal control problem have been extended to the case of an infinite-dimensional bilinear system.
Explicit formulas have been derived for the right-hand side of the generalized Lyapunov equations arising for the terms of order three and more.
Non-linear feedback laws have been derived from the Taylor expansions. Their efficiency has been proved theoretically with new error estimates.
It is planned to investigate the use of the resulting generalized Lyapunov equations together with model reduction techniques in an independent study. Generalizations of our results in several directions are possible and can be of interest.
These include the case of vector-valued controls and more general dynamical systems.

\section*{Acknowledgements}

This work was partly supported by the ERC advanced grant 668998 (OCLOC) under the EU's H2020 research program.

\appendix
\section{Proofs}

In this Appendix, we provide the proofs for several results which were used in the main part of the manuscript.

\begin{proof}[Proof of Lemma \ref{lemma:RegEstim}]
The existence can be proved by standard Galerkin arguments and the
a-priori estimates below. To verify these estimates
and  to alleviate the notation, we often omit the time variable $t$.
We first prove estimates \eqref{eq:KK7} and \eqref{eq:RegEstim1}. Multiplying
the state equation by
$y$ and using \eqref{ass:A1},
 we obtain
\begin{align}
\frac{1}{2} \frac{\dd}{\dd t} \| y \|_Y^2
= \ & \left\langle \frac{\dd y}{\dd t},y \right\rangle_{V^*,V}
= \langle Ay, y \rangle_{V^*,V} + \langle Ny, y \rangle_Y u + \langle B,y
\rangle_{V^*,V} \; u
\notag \\
\leq \ & \big( \lambda \| y \|_Y^2 -  \nu \| y \|_V^2 \big) + \big( \| N
\|_{\mathcal{L}(V,Y)} \; \| y \|_V \;\| y \|_Y \; |u| \big) + \big( \|
B \|_{V^*} \; \| y \|_V \; |u| \big). \label{eq:RegEstim3}
\end{align}
By Young's inequality,
\begin{equation} \label{eq:RegEstim4}
 \begin{aligned}
  \| N \|_{\mathcal{L}(V,Y)} \; \| y \|_V \; \| y \|_Y \; |u|
 &\leq \frac{\nu}{4} \| y \|_V^2 + C \| y \|_Y^2 \; |u|^2, \\
\| B \|_{V^*} \; \| y \|_V \;  |u| &
\leq \frac{\nu}{4} \| y \|_V^2 + C  \; |u|^2.
 \end{aligned}
\end{equation}
Therefore, combining \eqref{eq:RegEstim3} and \eqref{eq:RegEstim4},
\begin{equation} \label{eq:RegEstimKey}
\frac{\dd}{\dd t} \| y \|_Y^2 + \nu \| y \|_V^2
\leq C \big( \|y \|_Y^2 + |u|^2 + \| y \|_Y^2 \  |u|^2  \big).
\end{equation}
We integrate \eqref{eq:RegEstimKey} (without the term $\nu \| y \|_V^2$) and
apply Gronwall's inequality: for all $t \in [0,T]$,
\begin{align*}
\| y(t) \|_Y^2 \leq \ & \Big( \| y_0 \|_Y^2 + C \int_0^t |u|^2 \Big)
e^{  C \int_0^t 1 + |u|^2  }\\
\leq \ & \big( \|y_0\|_Y^2 + C\| u \|_{L^2(0,T)} \big) e^{   C(T + \| u
\|_{L^2(0,T)}) }.
\end{align*}
Estimate \eqref{eq:RegEstim1} is proved. Using \eqref{eq:RegEstimKey}  once
again, together with
\eqref{eq:RegEstim1} and the state equation \eqref{eq2.1}, estimate
\eqref{eq:KK7} follows.
Let us prove \eqref{eq:RegEstim2}. Let us set $\delta y=
S(u,\tilde{y}_0)-S(u,y_0)$. We have
\begin{equation*}
\frac{\dd }{\dd t}\delta y(t)= A \delta y(t) + N \delta y(t) u(t),
\end{equation*}
therefore, using the same techniques as for the derivation of
\eqref{eq:RegEstimKey}, we obtain
\begin{equation*}
\frac{\dd}{\dd t} \| \delta y \|_Y^2
\leq C \big( \|\delta y \|_Y^2 + \| \delta y \|_Y^2 \ |u|^2  \big),
\end{equation*}
and finally, by Gronwall's inequality,
\begin{equation*}
\| \delta y(t) \|_Y^2  \leq \| \delta y(0) \|_Y^2 \
e^{ C \int_0^t 1 + |u|^2 }
\leq \| \tilde{y}_0-y_0 \|_Y^2 \ e^{ C(T + \| u \|_{L^2(0,T)}) }.
\end{equation*}
Estimate \eqref{eq:RegEstim2} is proved.

We now assume that $y \in L^2(0,\infty;Y)$.
We integrate estimate \eqref{eq:RegEstimKey} and obtain
\begin{equation*}
\| y(t) \|_Y^2 \leq \| y_0 \|_Y^2 + C \big( \| y \|_{L^2(0,\infty;Y)}^2 + \| u
\|_{L^2(0,\infty)}^2 \big)
+ C \int_0^t \| y(s) \|_Y^2 \  |u(s)|^2 \dd s.
\end{equation*}
Estimate \eqref{eq:RegEstimBis1} follows with Gronwall's inequality.
From \eqref{eq:RegEstimKey}, we also obtain
\begin{equation*}
\nu \| y \|_V^2 \leq C \big( \|y \|_Y^2 + |u|^2 + \| y \|_Y^2 \  |u|^2
\big).
\end{equation*}
Estimate \eqref{eq:RegEstimBis2} follows directly by integration.
Finally, for a.e.\@ $t \geq 0$,
\begin{equation*}
\left\| \frac{\dd y }{\dd t} \right\|_{V^*}^2
\leq 3 \Big( \| A \|_{\mathcal{L}(V,V^*)}^2 \| y \|_V^2 + \| N
\|_{\mathcal{L}(Y,V^*)}^2 \| y \|_Y^2 | u |^2 + \| B \|_Y^2 |u|^2 \Big).
\end{equation*}
Estimate \eqref{eq:RegEstimBis3} follows directly by integration.

To verify the asymptotic behavior, we use the fact that $y \in L^2(0,T,Y)$ and $y\in
C([0,T],Y)$ imply the existence of a monotonically increasing sequence of
numbers $(t_k)_{k=1}^\infty$ such that $\|y(t_k)\|_Y \to 0, t_k \to \infty$
as $k\to \infty$. Since $y \in W(0,\infty)$ for any $T>0$, we have that
\begin{align*}
 \frac{\dd }{\dd t} \langle y,y \rangle  = 2 \left\langle \frac{\dd }{\dd t}
y, y \right\rangle _{V^*,V} \text{ for a.e. } t>0,
\end{align*}
see \cite[Proposition 1.2, Chapter 3]{Sho97}. Given any $T>0$ and choosing $t_k
> T$, we estimate
\begin{align*}
  \| y(T) \| _Y^2 &= \| y(t_k) \|_Y^2 - 2\int_T^{t_k} \left \langle \frac{\dd
}{\dd t}y(t), y(t) \right\rangle_{V^*,V} \dd t  \\
&\le \| y(t_k) \|_Y^2 + 2 \left\| \frac{\dd }{\dd t} y \right \|
_{L^2(T,\infty; V^*)} \| y\| _{L^2(T,\infty,V)} \longrightarrow 0
\end{align*}
for $t_k \to \infty, T\to \infty.$
\end{proof}

\begin{proof}[Proof of Proposition \ref{prop:kk1}]
Since there exists a feasible control and since
$\mathcal{J}$ is bounded from below, $\mathcal{V}(y_0)$ is finite and
there exists a minimizing sequence $(u_n)_{n \in \mathbb{N}}$ in $L^2(0,\infty)$
with associated states $y_n:= S(u_n,y_0)$. By definition of $\mathcal{J}$, the
sequences $(u_n)_{n \in \mathbb{N}}$ and $(y_n)_{n \in \mathbb{N}}$ are bounded
in $L^2(0,\infty)$ and $L^2(0,\infty;Y)$, respectively. We deduce from estimates
\eqref{eq:RegEstimBis1}, \eqref{eq:RegEstimBis2}, and \eqref{eq:RegEstimBis3},
that the sequence $(y_n)_{n \in \mathbb{N}}$ is bounded in $W(0,\infty)$.
Extracting if necessary a subsequence, there  exists $(\bar{u},\bar{y}) \in
L^2(0,\infty) \times W(0,\infty)$ such that $(u_n,y_n) \rightharpoonup
(\bar{u},\bar{y})$ in $L^2(0,\infty) \times W(0,\infty)$.

We prove now that $\bar{y}= S(\bar{u},y_0)$. Let $T>0$, let $v \in W(0,T) \cap
L^\infty(0,T;V)$ be arbitrary. For all $n \in \mathbb{N}$, we have
\begin{equation} \label{eq:ExistenceSol1}
\int_0^T \Big\langle \frac{\dd}{\dd t} \, y_n (t), v(t) \Big\rangle_{V^*,V} \dd
t
= \int_0^T \big\langle Ay_n(t) + Ny_n(t) u_n(t) + Bu_n(t), v(t)
\big\rangle_{V^*,V} \dd t.
\end{equation}
Since $\frac{\dd }{\dd t} y_n \rightharpoonup \frac{\dd}{\dd t} \bar{y}$ in
$L^2(0,T;V^*)$, we can pass to the limit in the l.h.s.\@ of the above equality.
Moreover, since $Ay_n \rightharpoonup A \bar{y}$ in $L^2(0,T;V^*)$,
\begin{equation*}
\int_0^T \langle Ay_n(t), v(t) \rangle_{V^*,V} \dd t
\underset{n \to \infty}{\longrightarrow}
\int_0^T \langle A\bar{y}(t),v(t) \rangle_{V^*,V} \dd t.
\end{equation*}
We also have
\begin{align}
& \Big| \int_0^T \langle N y_n(t), v(t)  \rangle_{V^*,V} \, u_n(t) \dd t -
\int_0^T
\langle N\bar{y}(t), v(t) \rangle \, u(t) \dd t \Big| \notag \\
& \qquad =  \Big| \int_0^T \langle y_n(t), N^* v(t) \rangle_Y \, u_n(t) \dd t -
\int_0^T
\langle \bar{y}(t), N^*v(t) \rangle_Y \, u(t) \dd t \Big| \notag \\
& \qquad \leq \int_0^T \big| \langle y_n(t)-\bar{y}(t),N^*v(t) \rangle_Y \big| \, |u_n|
\dd t
+ \Big| \int_0^T \langle \bar{y}(t), N^*v(t) \rangle_Y \big( u_n(t)- \bar{u}(t) \big) \dd t
\Big|. \label{eq:ExistenceSol2}
\end{align}
The first integral in the r.h.s.\@ of \eqref{eq:ExistenceSol2} is bounded by
\begin{equation*}
\| y_n- \bar{y} \|_{L^2(0,T;Y)} \, \| N^* v  \|_{L^\infty(0,T;Y)} \Big( \sup_{k \in
\mathbb{N}} \, \| u_k \|_{L^2(0,T)} \Big)
\end{equation*}
and therefore converges to 0, since  $\| y_n- \bar{y} \|_{L^2(0,T;Y)} \underset{n \to
\infty}{\longrightarrow} 0$ by the Aubin-Lions lemma. Since $\bar{y} \in C(0,T;Y)$ and $N^*v \in
L^2(0,T;Y)$, it holds that: $\langle \bar{y}(\cdot),N^* v(\cdot) \rangle \in L^2(0,T)$
and therefore,
the second integral in \eqref{eq:ExistenceSol2} converges to 0.
We can pass to the limit in \eqref{eq:ExistenceSol1}. We thus obtain:
\begin{equation*}
\int_0^T \Big\langle \frac{\dd}{\dd t} \, \bar{y} (t), v(t) \Big\rangle_{V^*,V} \dd t
= \int_0^T \big\langle A \bar{y} (t) + N \bar{y} (t) u(t) + B \bar{u}(t), v(t) \big\rangle_{V^*,V} \dd
t.
\end{equation*}
Since $W(0,T) \cap L^\infty(0,T;V)$ is dense in $W(0,T)$, we obtain that $\bar{y} = S(\bar{u},y_0)$.

Finally, since the following mapping is convex:
\begin{equation*}
(u,y) \in L^2(0,\infty) \times W(0,\infty) \mapsto \frac{1}{2} \int_0^\infty \|
y(t) \|_Y^2 \dd t + \frac{\alpha}{2} \int_0^\infty |u(t)|^2 \dd t,
\end{equation*}
it is also weakly lower semi-continuous and therefore,
\begin{equation*}
\mathcal{J}(\bar{u},y_0) \leq \underset{n \to \infty}{\liminf} \ \mathcal{J}(u_n,y_0),
\end{equation*}
which proves the optimality of $\bar{u}$.
\end{proof}

\begin{proof}[Proof of Lemma \ref{lemma:continuousTensors1}]
One can easily check that if $\mathcal{T}$ is not bounded, then it is not
continuous at $0$.
Assume now that $\mathcal{T}$ is bounded. Let $M >0$, let $y=(y_1,...,y_k) \in
Y^k$ and $v=(v_1,...,v_k) \in Y^k$ be such that $\| y \|_{Y^k} \leq M$ and $\| v
\|_{Y^k} \leq M$.
Then, by \eqref{eq:OperatorNormTensor2},
\begin{align}
\big| \mathcal{T}(v_1,...,v_k)-\mathcal{T}(y_1,...,y_k) \big|
= \ & \Big| \big[ \mathcal{T}(v_1,...,v_k) - \mathcal{T}(y_1, v_2,...,v_k) \big]
\notag \\
& \qquad + \big[ \mathcal{T}(y_1,v_2,...,v_k)- \mathcal{T}(y_1,y_2,v_3,...,v_k)
\big] \notag \\
& \qquad + ... + \big[ \mathcal{T}(y_1,...,y_{k-1},v_k)-
\mathcal{T}(y_1,...,y_k) \big] \Big| \notag \\
= \ & \big| \mathcal{T}(v_1-y_1,v_2,...,v_k) +
\mathcal{T}(y_1,v_2-y_2,v_3,...,v_k) \notag \\
& \qquad + ... + \mathcal{T}(y_1,...,y_{k-1},v_k-y_k) \big| \notag \\
\leq \ & k M^{k-1} \, \| \mathcal{T} \| \, \| y-v \|_{Y^k}.
 \label{eq:LipschitzContinuity}
\end{align}
The lemma is proved.
\end{proof}

\begin{proof}[Proof of Lemma \ref{lemma:leibnitz}]
We prove the lemma by induction. The case $k=1$ is trivially satisfied, since $S_{0,1}$ and $S_{1,0}$ both consist of the unique permutation of the set $\{ 1 \}$.

Let $k \geq 1$, let us assume that formula \eqref{eq:leibnitz0} holds. Before proving \eqref{eq:leibnitz0} for $k+1$, we make an important observation on the structure of $S_{i,k+1-i}$, for $i=1,...,k$. For any $\sigma \in S_{i,k+1-i}$, either $\sigma(i)= k+1$ or $\sigma(k+1)= k+1$. More precisely, we can describe $S_{i,k+1-i}$ as follows:
\begin{align}
& S_{i,k+1-i}= \big\{ \sigma \in S_{k+1} : \exists \rho \in S_{i,k-i}, \big( \sigma(1),...,\sigma(k+1) \big) = \big( \rho(1),...,\rho(k), k+1 \big) \big\} \notag \\
& \quad  \cup \big\{ \sigma \in S_{k+1} : \exists \rho \in S_{i-1,k+1-i}, \big( \sigma(1),...,\sigma(i+j) \big) = \big( \rho(1),...,\rho(i-1),k+1, \rho(i),...,\rho(k) \big) \big\} . \label{eq:leibnitz1}
\end{align}
Let us assume that $f$ and $g$ are $(k+1)$-times differentiable. Let $(z_1,...,z_{k+1}) \in Y^{k+1}$, using the induction assumption and the fact that $|S_{i,k-i}|= \binom{k}{i}$, we obtain
\begin{align}
& D^{k+1} [f(y)g(y)] (z_1,...,z_{k+1}) \notag \\
=  & D\Big[ \sum_{i= 0}^k \sum_{\rho \in S_{i,k-i}} D^i f(y)(z_{\rho(1)},...,z_{\rho(i)}) D^{k-i} g(y) (z_{\rho(i+1)},..., z_{\rho(k)}) \Big] z_{k+1} \notag \\
= & \underbrace{\sum_{i= 0}^k \sum_{\rho \in S_{i,k-i}} D^{i+1} f(y)(z_{\rho(1)},...,z_{\rho(i)},z_{k+1})
D^{k-i} g(y) (z_{\rho(i+1)},..., z_{\rho(k)}) }_{=:(a)} \notag \\
& \qquad + \underbrace{\sum_{i= 0}^k \sum_{\rho \in S_{i,k-i}} D^{i} f(y)(z_{\rho(1)},...,z_{\rho(i)})
D^{k-i+1} g(y) (z_{\rho(i+1)},..., z_{\rho(k)}, z_{k+1})}_{=:(b)}. \label{eq:leibnitz2}
\end{align}
In the sum involved in term $(a)$, we isolate the value $i= k$. Note that $S_{k,0}$ only contains one permutation, the identity on $\{ 1,...,k \}$. We also perform a change of index for the remaining values of $i$. We finally obtain for term $(a)$ the following expression:
\begin{align}
(a)= & \sum_{i=1}^k \sum_{\rho \in S_{i-1,k+1-i}} D^{i} f(y)(z_{\rho(1)},...,z_{\rho(i-1)},z_{k+1}) D^{k+1-i} g(y) (z_{\rho(i)},...,z_{\rho(k)}) \notag \\
& \quad + D^{k+1}f(y)(z_1,...,z_{k+1}) g(y). \label{eq:leibnitz3}
\end{align}
Observe that the last term of the r.h.s.\@ can be written as follows:
\begin{equation}
D^{k+1} f(y)(z_1,...,z_{k+1}) g(y)
= \sum_{\rho \in S_{k+1,0}} D^{k+1} f(y) (z_{\rho(1)},...z_{\rho(k+1)}) D^0 g(y). \label{eq:leibnitz4}
\end{equation}
Isolating the value $i= 0$ in the sum involved in term $(b)$, we obtain
\begin{align}
(b) = & \sum_{i= 1}^k \sum_{\rho \in S_{i,k-i}} D^{i} f(y)(z_{\rho(1)},...,z_{\rho(i)})
D^{k-i+1} g(y) (z_{\rho(i+1)},..., z_{\rho(k)}, z_{k+1}) \notag \\
& \qquad + f(y) D^{k+1} g(y)(z_1,...,z_{k+1}). \label{eq:leibnitz5}
\end{align}
Observe that the last term of the r.h.s.\@ can be written as follows:
\begin{equation}
f(y) D^{k+1} g(y) (z_1,...,z_{k+1})
= \sum_{\rho \in S_{0,k+1}} D^0 f(y) D^{k+1} g(y) (z_{\rho(1)},...z_{\rho(k+1)}). \label{eq:leibnitz6}
\end{equation}
We can now combine \eqref{eq:leibnitz1}-\eqref{eq:leibnitz6}. In particular, the terms involved in the sums in \eqref{eq:leibnitz3} and \eqref{eq:leibnitz5} can be combined together thanks to the representation of $S_{i,k+1-i}$ provided in \eqref{eq:leibnitz1}.
We finally obtain
\begin{align*}
(a) + (b)= \ & \sum_{i=0}^{k+1} \sum_{\sigma \in S_{i,k+1-i}} D^i f(y)(z_{\sigma(1),...,\sigma(i)}) D^{k+1-i} g(y) (z_{\sigma(i+1)},...,z_{\sigma(k+1)}) \\
= \ & \sum_{i=0}^{k+1} \binom{k+1}{i} \text{Sym}_{i,k+1-i} \big( D^i f(y) \otimes D^{k+1-i}g(y) \big) (z_1,...,z_{k+1}).
\end{align*}
In the last inequality, we used that $|S_{i,k+1-i}|= \binom{k+1}{i}$.
The Leibnitz formula is proved for $k+1$. This concludes the proof.
\end{proof}

\begin{proof}[Proof of Lemma \ref{lemma:symmetry}]
The first part of the lemma follows directly from the definition and from the fact that $|S_{i,j}|= \binom{i+j}{i}$. Assume that $\mathcal{T}_1$ and $\mathcal{T}_2$ are symmetric.
Let us set $f\colon y \in Y \mapsto \mathcal{T}_1(y^{\otimes i})$ and $g\colon y \in Y \mapsto \mathcal{T}_2(y^{\otimes j})$. By Lemma \ref{lemma:continuousTensors2}, the functions $f$ and $g$ are both infinitely many times differentiable. Applying the Leibnitz formula to $fg$, we obtain
\begin{equation*}
D^{i+j}[f(0)g(0)]
= \sum_{\ell= 0}^{i+j} \binom{i+j}{\ell} \text{Sym}_{\ell,i+j-\ell} \big( D^\ell f(0) \otimes D^{i+j-\ell} g(0) \big).
\end{equation*}
The derivatives of $f$ of order $k > i$ are all null and the derivatives of $g$ of order $k > j$ are also all null. Therefore, in the above sum, all the terms vanish, except the one obtained for $\ell= i$. Moreover, since $\mathcal{T}_1$ and $\mathcal{T}_2$ are symmetric,
\begin{equation*}
D^i f(0)= i! \, \mathcal{T}_1 \quad \text{and} \quad D^j g(0)=  j! \, \mathcal{T}_2.
\end{equation*}
We therefore obtain that
\begin{equation*}
D^{i+j}[f(0)g(0)] = (i+j)! \, \text{Sym}_{i,j} \big( \mathcal{T}_1 \otimes \mathcal{T}_2 \big).
\end{equation*}
This proves that $\text{Sym}_{i,j} \big( \mathcal{T}_1 \otimes \mathcal{T}_2 \big)$ is a symmetric multilinear form, since it can be expressed as the $(i+j)$-th derivative of an infinitely many times differentiable function. The lemma is proved.
\end{proof}

\bibliographystyle{siam}

\begin{thebibliography}{10}

\bibitem{AguK14}
{\sc C.~Aguilar and A.~Krener}, {\em Numerical solutions to the {B}ellman
  equation of optimal control}, Journal of Optimization Theory and
  Applications, 160 (2014), pp.~527--552.

\bibitem{Alb61}
{\sc E.~Al'brekht}, {\em On the optimal stabilization of nonlinear systems},
  Journal of Applied Mathematics and Mechanics, 25 (1961), p.~1254.

\bibitem{BeeTB00}
{\sc S.~Beeler, H.~Tran, and H.~Banks}, {\em Feedback control methodologies for
  nonlinear systems}, Journal of Optimization Theory and Applications, 107
  (2000), pp.~1--33.

\bibitem{Benetal07}
{\sc A.~Bensoussan, G.~Da~Prato, M.~Delfour, and S.~Mitter}, {\em
  Representation and Control of Infinite Dimensional Systems}, Birkh{\"{a}}user
  Boston Basel Berlin, 2007.

\bibitem{BreKP16}
{\sc T.~Breiten, K.~Kunisch, and L.~Pfeiffer}, {\em Control strategies for the
  {F}okker-{P}lanck equation}, tech. report, SFB-Report 2016-003, 2016.
\newblock To appear in ESAIM:COCV.

\bibitem{CebC84}
{\sc W.~Cebuhar and V.~Costanza}, {\em Approximation procedures for the optimal
  control of bilinear and nonlinear systems}, Journal of Optimization Theory
  and Applications, 43 (1984), pp.~615--627.

\bibitem{CurZ95}
{\sc R.~Curtain and H.~Zwart}, {\em An Introduction to Infinite-Dimensional
  Linear Systems Theory}, Springer-Verlag, 2005.

\bibitem{Gar72}
{\sc W.~Garrard}, {\em Suboptimal feedback control for nonlinear systems},
  Automatica, 8 (1972), pp.~219--221.

\bibitem{GarES92}
{\sc W.~Garrard, D.~Enns, and A.~Snell}, {\em Nonlinear feedback control of
  highly manoeuvrable aircraft}, International Journal of Control, 56 (1992),
  pp.~799--812.

\bibitem{Gra04}
{\sc L.~Grasedyck}, {\em Existence and computation of low {K}ronecker-rank
  approximations for large linear systems of tensor product structure},
  Computing, 72 (2004), pp.~247--265.

\bibitem{KreAH13}
{\sc A.~Krener, C.~Aguilar, and T.~Hunt}, {\em Series solutions of {HJB}
  equations}, in Mathematical System Theory -- Festschrift in Honor of Uwe
  Helmke on the Occasion of his Sixtieth Birthday, K.~H\"uper and J.~Trumpf,
  eds., CreateSpace, 2013, pp.~247--260.

\bibitem{LasT00}
{\sc I.~Lasiecka and R.~Triggiani}, {\em Control Theory for Partial
  Differential Equations: Volume 1, Abstract Parabolic Systems: Continuous and
  Approximation Theories}, vol.~1, Cambridge University Press, 2000.

\bibitem{LioM72}
{\sc J.~Lions and E.~Magenes}, {\em Non-homogeneous Boundary Value Problems and
  Applications. Vol. I/II}, Die Grundlehren der mathematischen Wissenschaften
  in Einzeldarstellungen, Springer-Verlag, Berlin, 1972.

\bibitem{Luk69}
{\sc D.~Lukes}, {\em Optimal regulation of nonlinear dynamical systems}, SIAM
  Journal on Control, 7 (1969), pp.~75--100.

\bibitem{NavK07}
{\sc C.~Navasca and A.~Krener}, {\em Patchy solutions of
  {H}amilton-{J}acobi-{B}ellman partial differential equations}, in Modeling,
  Estimation and Control: Festschrift in Honor of Giorgio Picci on the Occasion
  of his Sixty-Fifth Birthday, A.~Chiuso, S.~Pinzoni, and A.~Ferrante, eds.,
  Springer Berlin Heidelberg, Berlin, Heidelberg, 2007, pp.~251--270.

\bibitem{Paz83}
{\sc A.~Pazy}, {\em Semigroups of Linear Operators and Applications to Partial
  Differential Equations}, Springer New York, 1983.

\bibitem{Ray06}
{\sc J.-P. Raymond}, {\em Feedback boundary stabilization of the
  two-dimensional {N}avier--{S}tokes equations}, SIAM Journal on Control and
  Optimization, 45 (2006), pp.~790--828.

\bibitem{Sho97}
{\sc R.~Showalter}, {\em Monotone Operators in Banach Space and Nonlinear
  Partial Differential Equations}, vol.~49 of Mathematical Surveys and
  Monographs, American Mathematical Society, 1997.

\bibitem{Tan79}
{\sc H.~Tanabe}, {\em Equations of evolution}, vol.~6 of Monographs and Studies
  in Mathematics, Pitman (Advanced Publishing Program), Boston, Mass.-London,
  1979.
\newblock Translated from the Japanese by N. Mugibayashi and H. Haneda.

\bibitem{TheBR10}
{\sc L.~Thevenet, J.-M. Buchot, and J.-P.Raymond}, {\em Nonlinear feedback
  stabilization of a two-dimensional {B}urgers equation}, ESAIM: Control,
  Optimisation and Calculus of Variations, 16 (2010), pp.~929--955.

\end{thebibliography}

\end{document}